\numberwithin{equation}{section}
\newtheorem{thm}{Theorem}[section]
\newtheorem{lem}[thm]{Lemma}
\newtheorem{prop}[thm]{Proposition}
\newtheorem{cor}[thm]{Corollary}
\theoremstyle{definition}
\newtheorem{defn}[thm]{Definition}
\newtheorem{conj}[thm]{Conjecture}
\newtheorem{problem}[thm]{Problem}
\newtheorem{remark}[thm]{Remark}
\DeclareMathOperator*{\KK}{\text{\Large \bf K}}
\newcommand\PP{\mathcal{P}}
\newcommand\LL{\mathcal{L}}
\newcommand\Mot{\operatorname{Motz}}
\newcommand\wt{\operatorname{wt}}
\newcommand\flr[1]{\left\lfloor #1\right\rfloor}
\newcommand\Qbinom[3]{\genfrac{[}{]}{0pt}{}{#1}{#2}_{#3}}
\newcommand\qbinom[2]{\Qbinom{#1}{#2}{q}}
\newcommand\area{\operatorname{area}}
\newcommand\col{\operatorname{col}}
\newcommand\row{\operatorname{row}}
\newcommand\qhyper[5]{
  {}_{#1}\phi_{#2} \left(#3;#4;#5\right)
}
\title{Three families of $q$-Lommel polynomials}
\author{Jang Soo Kim and Dennis Stanton}
\thanks{The first author was supported by NRF grants \#2019R1F1A1059081 and \#2016R1A5A1008055.} 
\address{Department of Mathematics,
Sungkyunkwan University (SKKU), Suwon, Gyeonggi-do 16419, South Korea}
\email{jangsookim@skku.edu}
\address{School of Mathematics,
University of Minnesota,
Minneapolis, Minnesota 55455, USA}
\email{stanton@math.umn.edu}
\date{\today}
\begin{document}

\begin{abstract} Three $q$-versions of Lommel polynomials are studied. Included
  are explicit representations, recurrences, continued fractions, and
  connections to associated Askey--Wilson polynomials. Combinatorial
  results are emphasized, including a general theorem when $R_I$ moments coincide with
  orthogonal polynomial moments. The combinatorial results use weighted Motzkin
  paths, Schr\"oder paths, and parallelogram polyominoes.
\end{abstract}

\maketitle

\section{Introduction}

Lehmer \cite{Lehmer_1944} used the following Bessel function identity to study zeros of Bessel functions
\begin{equation}
\label{eq:Lehmereq}
\frac{J_{\nu+1}(x)}{J_\nu(x)}=2 \sum_{n=1}^\infty \sigma_{2n}(\nu) x^{2n-1}.
\end{equation}
In this identity $\sigma_{2n}(\nu)$ is the $2n^{\mathrm{th}}$ power sum of the inverses of the 
positive zeros $j_{\nu,k}$ of $J_\nu(x)$,
\begin{equation}
  \label{eq:sigma2n}
  \sigma_{2n}(\nu)=\sum_{k=1}^\infty j_{\nu,k}^{-2n}.
\end{equation}
Lehmer noted that $\sigma_{2n}(\nu)$ is a rational function of $\nu$, with a predictable 
denominator, and a numerator with nonnegative coefficients.
Kishore \cite{Kishore1964} proved Lehmer's positivity conjecture. Lalanne 
\cite[Prop. 3.6]{Lalanne92}, \cite[Th. 4.7]{Lalanne93}
proved $q$-versions of Kishore's result using weighted binary trees and also weighted Dyck paths.

The Lommel polynomials are orthogonal with respect to the linear functional
$$
\LL(P(x))=\sum_{k=1}^\infty  \left( P\left(j_{\nu,k}^{-1}\right) + P\left(-j_{\nu,k}^{-1}\right) \right)j_{\nu,k}^{-2}.
$$
Thus \( \sigma_{2n}(\nu) \) in \eqref{eq:sigma2n} is effectively the \(
(2n-2)^{\mathrm{th}}\) moment for the Lommel polynomials, while \eqref{eq:Lehmereq}
is the Lommel moment generating function.

The purpose of this paper is to study two sets of $q$-Lommel orthogonal
polynomials, whose moment generating functions are quotients of $q$-Bessel
functions. We also consider another set of polynomials, which is a type $R_I$
polynomial, and whose moment generating function is again a quotient of
$q$-Bessel functions.
Koelink and Van Assche \cite{Koelink1995} and  Koelink \cite{Koelink1999}
analytically studied two of these \( q \)-Lommel polynomials.
In this paper we concentrate on the combinatorial aspect of these three 
\( q \)-Lommel polynomials.

There are combinatorial results on the quotient of Bessel functions
and the quotient of $q$-Bessel functions. Delest and F\'edou \cite{Delest1993} showed that a
generating function for parallelogram polyominoes can be written as a ratio of
Jackson's third $q$-Bessel functions. Bousquet-M\'elou and Viennot
\cite{Bousquet-Melou_1992} generalized their result by adding one more
parameter. A recounting of the history of the combinatorics of the $q$-analogue
of the quotient of Bessel functions may be found in \cite[Sec.
1]{Bousquet-Melou1995} (see also \cite[Sec. 4]{Lalanne93}). It includes results
by Klarner and Rivest \cite[see (19)]{Klarner74}, Delest and Fedou
\cite{Delest1993}, Fedou \cite{Fedou_1994} Lalanne \cite{Lalanne92,Lalanne93},
Brak and Guttman \cite{Brak1990}, Bousquet-M\'elou and Viennot
\cite{Bousquet-Melou_1992}, and Barcucci et al. \cite[Cor. 3.5]{Barcucci1998},
\cite[Th. 4.3, Th. 5.3]{Barcucci_2001}.

In this paper we put these results in perspective by relating them to \( q
\)-Lommel polynomials.
The moment generating function has a continued fraction expansion.
Using the general theory of orthogonal and type $R_I$
polynomials we give finite versions of the infinite continued fractions. We show
that a generating function for bounded diagonal parallelogram polyominoes is
given by a ratio of \( q \)-Lommel polynomials, which is a finite version of the
result of Bousquet-M\'elou and Viennot \cite{Bousquet-Melou_1992}.

Even though the Lommel polynomials have a hypergeometric representation as a ${}_2F_3$,
they do not appear in the Askey scheme. In this paper we rectify this, by realizing two sets of 
$q$-Lommel polynomials as limiting cases of associated Askey--Wilson
polynomials.
One may ask for an associated Askey scheme which contains this limiting case.

The paper is organized in the following way. In
Section~\ref{sec:q-lommel-polynomials} we define the three sets of $q$-Lommel
polynomials using three-term recurrence relations. The classical connection
between these polynomials and $q$-Bessel functions is given in
Section~\ref{sec:q-bessel-functions}. The associated Askey--Wilson polynomials
are reviewed in Section~\ref{sec:q-lommel-polynomials-1}, along with explicit
limiting cases to the $q$-Lommel polynomials, see Theorems~\ref{thm:limit1} and
\ref{thm:limit2}. In Section~\ref{sec:moments-cont-fract} we independently prove
the continued fraction expansions for the moment generating functions, and give
two surprising equalities of continued fractions in Corollary~\ref{cor:bigcor}
and Theorem~\ref{thm:genequalmom}. Combinatorial interpretations of these
continued fractions are given in Section~\ref{sec:ratios-q-lommel}, see
Theorem~\ref{thm:ratio of R-1} and Corollary~\ref{cor:double sum ratio of R-1}.
A general combinatorial result for the concurrence of type $R_I$ moments and orthogonal
polynomials moments is given in Section~\ref{sec:concurrence-moments},
see Theorem~\ref{thm:concurmom}. In
Section~\ref{sec:open-problems} we propose some open problems.

We use the standard notations for both hypergeometric series and basic
hypergeometric series \cite{GR}.

\section{$q$-Lommel polynomials}
\label{sec:q-lommel-polynomials}

In this section we give the defining recurrence relations for the Lommel, 
the classical $q$-Lommel, the even-odd $q$-Lommel, and the type 
$R_I$ $q$-Lommel
polynomials.

\begin{defn} 
\label{defn:Lommel}
The \emph{monic Lommel polynomials} $h_n(x;c)$ are defined by 
$$
h_{n+1}(x;c)=xh_n(x;c)- \frac{1}{(c+n)(c+n-1)}h_{n-1}(x;c), n\ge 0,
\quad h_{-1}(x;c)=0, h_0(x;c)=1.
$$
\end{defn}

We consider three versions of $q$-Lommel polynomials.

\begin{defn}\cite[\S 14.4]{Ismail} The \emph{classical $q$-Lommel polynomials} are defined by
$$
h_{n+1}(x;c,q)=xh_n(x;c,q)-\lambda_nh_{n-1}(x;c,q), \quad n\ge 0, \quad h_{-1}(x;c,q)=0, \quad h_0(x;c,q)=1,
$$
where
\[
  \lambda_{n} = \frac{cq^{n-1}}{(1-cq^{n-1})(1-cq^{n})}.
\]
\end{defn}

\begin{defn} 
\label{defn:evenodd}
The \emph{even-odd $q$-Lommel polynomials} are defined by
$$
p_{n+1}(x;c,q)=xp_n(x;c,q)-\lambda_np_{n-1}(x;c,q), \quad n\ge 0, \quad p_{-1}(x;c,q)=0, \quad p_0(x;c,q)=1,
$$
where
\begin{equation}
  \lambda_{2n} = \frac{cq^{3n-1}}{(1-cq^{2n-1})(1-cq^{2n})},
  \qquad
  \lambda_{2n+1} = \frac{q^{n}}{(1-cq^{2n})(1-cq^{2n+1})}.
\end{equation}
\end{defn}

Note that 
$$
\lim_{q\to 1} (1-q)^n h_n(x/(1-q);q^c,q)=h_n(x;c), \quad \lim_{q\to 1} (1-q)^n p_n(x/
(1-q);q^c,q)=h_n(x;c),
$$
so that each polynomial may be considered as a $q$-analogue of the classical Lommel polynomials.

\begin{defn} 
\label{defn:typeI}
The \emph{type $R_I$ $q$-Lommel polynomials} are defined by
$$
r_{n+1}(x;c,q)=(x-b_n)r_n(x;c,q)-xa_nr_{n-1}(x;c,q), \quad r_{-1}(x;c,q)=0, \quad r_0(x;c,q)=1,
$$
where
\[
  b_n = \frac{q^{n}}{1-cq^{n}}, \qquad
  a_n = \frac{cq^{2n-1}}{(1-cq^{n-1})(1-cq^{n})}.
\]
\end{defn}

Note that if 
$$
\hat{r}_n(x;c)=\lim_{q\to 1} (1-q)^{2n} r_n(x/(1-q)^2;q^c,q),
$$
then
\begin{equation}
\label{eq:RR}
\hat{r}_{n+1}(x;c)=x \hat{r}_n(x;c)-\frac{x}{(c+n-1)(c+n)}\hat{r}_{n-1}(x;c).
\end{equation}

The polynomials $\hat{r}_n(x;c)$ in \eqref{eq:RR} are closely related to the
monic Lommel polynomials. For example it is known that
their moments are the same, see \eqref{eq:1}. 

Koelink and Van Assche study the even-odd and the type $R_I$ $q$-Lommel polynomials
in  \cite[Sec. 4]{Koelink1995}, and Koelink continues this analytic study 
in \cite{Koelink1999}.

Orthogonality relations for the classical $q$-Lommel are in \cite[Theorem 14.4.3]{Ismail}, while those 
for the even-odd $q$-Lommel and the type $R_I$ $q$-Lommel are in
\cite[Theorem 4.2]{Koelink1995} and \cite[Theorem 3.4]{Koelink1995}.

\section{$q$-Bessel functions and $q$-Lommel polynomials}
\label{sec:q-bessel-functions}

In this section we give the recurrence relation which connects $q$-Bessel
functions to the classical $q$-Lommel polynomials and the type $R_I$ $q$-Lommel polynomials.
This was the original motivation for Lommel polynomials.

\begin{defn}
The \emph{Bessel functions} $J_\nu(x)$ are defined by
\[
  J_\nu(z) =\frac{(z/2)^\nu}{\Gamma(\nu+1)} \sum_{n\ge0} \frac{(-z^2/4)^n}{n! (\nu+1)_n}.
\]
\end{defn}

\begin{defn}\cite[p.188, (6.2)]{Chihara}
The \emph{classical Lommel polynomials} $R_{n,\nu}(z)$ are (non-monic) polynomials in $z^{-1}$ defined by
\( R_{0,\nu}(z) =1 \), \( R_{1,\nu}(z) = 2\nu/z \), and
\begin{equation}
\label{eq:rec R}
R_{n+1,\nu}(z) = \frac{2(n+\nu)}{z} R_{n,\nu}(z) - R_{n-1,\nu}(z), \qquad n\ge1. 
\end{equation}
Equivalently, 
$$
h_n(x;c)=R_{n,c}(2/x)/(c)_n.
$$
\end{defn}

The connection of Bessel functions to Lommel polynomials is the following proposition.

\begin{prop}\cite[p.187]{Chihara}
\label{prop:LomBes}
The Bessel functions and the classical Lommel polynomials are related by the recurrence
\begin{equation}\label{eq:Bessel-Lommel}
  J_{\nu+n}(z) = R_{n,\nu}(z) J_\nu(z) -R_{n-1,\nu+1}(z) J_{\nu-1}(z).
\end{equation}
\end{prop}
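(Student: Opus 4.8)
The plan is to prove \eqref{eq:Bessel-Lommel} by induction on $n$, exploiting the three-term recurrence \eqref{eq:rec R} for the Lommel polynomials $R_{n,\nu}(z)$ together with the classical contiguous relation for Bessel functions $J_{\nu+1}(z) = \frac{2\nu}{z} J_\nu(z) - J_{\nu-1}(z)$. First I would check the base cases: for $n=0$ the claimed identity reads $J_\nu(z) = R_{0,\nu}(z) J_\nu(z) - R_{-1,\nu+1}(z) J_{\nu-1}(z)$, which holds since $R_{0,\nu}=1$ and $R_{-1,\nu+1}=0$ (the recurrence \eqref{eq:rec R} run backwards, or the standard convention). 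For $n=1$ it reads $J_{\nu+1}(z) = R_{1,\nu}(z) J_\nu(z) - R_{0,\nu+1}(z) J_{\nu-1}(z) = \frac{2\nu}{z} J_\nu(z) - J_{\nu-1}(z)$, which is exactly the Bessel contiguous relation and is the crux computational input; I would verify it directly from the series definition of $J_\nu$ by comparing coefficients, or simply cite it as classical.

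For the inductive step, I would assume \eqref{eq:Bessel-Lommel} holds for $n$ and for $n-1$ (strong induction, or equivalently carry two consecutive cases). The key observation is that the recurrence \eqref{eq:rec R} for $R_{n+1,\nu}$ is \emph{in the index $n$}, while the Bessel contiguous relation shifts $\nu$. So I would write the Bessel recurrence in the form $J_{\nu+n+1}(z) = \frac{2(\nu+n)}{z} J_{\nu+n}(z) - J_{\nu+n-1}(z)$, substitute the inductive hypotheses for $J_{\nu+n}$ and $J_{\nu+n-1}$ on the right-hand side, and collect the coefficients of $J_\nu(z)$ and of $J_{\nu-1}(z)$ separately. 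The coefficient of $J_\nu(z)$ becomes $\frac{2(\nu+n)}{z} R_{n,\nu}(z) - R_{n-1,\nu}(z)$, which equals $R_{n+1,\nu}(z)$ precisely by \eqref{eq:rec R}. Likewise the coefficient of $J_{\nu-1}(z)$ is $-\bigl(\frac{2(\nu+n)}{z} R_{n-1,\nu+1}(z) - R_{n-2,\nu+1}(z)\bigr) = -R_{n,\nu+1}(z)$, again by \eqref{eq:rec R} applied with $\nu$ replaced by $\nu+1$ and $n$ replaced by $n-1$. This yields exactly \eqref{eq:Bessel-Lommel} with $n$ replaced by $n+1$, closing the induction.

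The main obstacle, such as it is, is purely bookkeeping: one must make sure the index shift in the second Lommel polynomial $R_{n-1,\nu+1}$ stays consistent, i.e. that the inductive hypothesis at level $n-1$ is being applied with base parameter $\nu$ (so that it involves $R_{n-1,\nu}$ and $R_{n-2,\nu+1}$) and not with some other shift, and to handle the degenerate small-$n$ indices $R_{-1,\cdot}=0$ gracefully so the induction is well-founded from $n=0,1$. There is no genuine analytic difficulty here; everything reduces to the two recurrences and the single classical Bessel contiguous identity, which itself is an elementary manipulation of the defining power series. I would present the argument compactly, doing the coefficient-collection computation in one displayed line for $J_\nu$ and one for $J_{\nu-1}$, and citing \eqref{eq:rec R} at each equality.
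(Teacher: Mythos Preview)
Your argument is correct. The paper does not supply its own proof of this proposition; it simply cites \cite[p.~187]{Chihara}, so there is no in-paper proof to compare against. The induction you outline---anchored by the Bessel contiguous relation $J_{\nu+1}(z)=\tfrac{2\nu}{z}J_\nu(z)-J_{\nu-1}(z)$ and propagated via \eqref{eq:rec R}---is exactly the standard proof one finds in Chihara and elsewhere, and your bookkeeping of the index shift $\nu\mapsto\nu+1$ in the second coefficient is handled correctly.
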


\begin{defn}
\emph{Jackson's first $q$-Bessel function} $J^{(1)}_\nu(z;q)$ and
\emph{second $q$-Bessel function} $J^{(2)}_\nu(z;q)$ are defined by
\begin{align}
\label{eq:J1}  J^{(1)}_\nu(z;q)
  &= \frac{(q^{\nu+1};q)_\infty}{(q;q)_\infty} (z/2)^\nu \qhyper21{0,0}{q^{\nu+1}}{q,-z^2/4},\\
\label{eq:J2}  J^{(2)}_\nu(z;q)
  &= \frac{(q^{\nu+1};q)_\infty}{(q;q)_\infty} (z/2)^\nu \qhyper01{-}{q^{\nu+1}}{q,-q^{\nu+1}z^2/4}.
\end{align}
\end{defn}

In this paper we consider only the first and third $q$-Bessel function, as the second 
$q$-Bessel can be obtained from the first by changing $q$ to $q^{-1}.$ Recall that we 
consider formal power series in $z$, and have no restriction on $q$.

\begin{prop}\cite[(14.4.1)]{Ismail} 
\label{prop:qBesLom1}
The first $q$-Bessel functions satisfy  
\begin{equation}
\label{eq:qBesLom1}
q^{n\nu+\binom{n}{2}} J_{\nu+n}^{(1)}(x;q)= R_{n,\nu}^{(1)}(x;q)J_{\nu}^{(1)}(x;q)
-R_{n-1,\nu+1}^{(1)}(x;q)J_{\nu-1}^{(1)}(x;q).
\end{equation}
where
\( R_{0,\nu}^{(1)}(x;q)=1 \),  \( R_{1,\nu}^{(1)}(x;q)=2(1-q^{n+\nu})/x \), and
$$
\frac{2}{x} (1-q^{n+\nu})
R_{n,\nu}^{(1)}(x;q)=R_{n+1,\nu}^{(1)}(x;q)+q^{n+\nu-1}R_{n-1,\nu}^{(1)}(x;q),
\qquad n\ge 1.
$$
\end{prop}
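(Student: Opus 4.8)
The plan is to follow the pattern of the classical argument behind Proposition~\ref{prop:LomBes}: extract a single three-term contiguous relation for $J^{(1)}_\nu$ in the parameter $\nu$, and then let the defining recurrence of the $R^{(1)}$'s propagate it. The first step is to prove
\[
q^{\nu} J^{(1)}_{\nu+1}(x;q) + J^{(1)}_{\nu-1}(x;q) = \frac{2(1-q^{\nu})}{x}\, J^{(1)}_{\nu}(x;q),
\]
which is the $n=1$ case of \eqref{eq:qBesLom1}. I would verify this directly from the series \eqref{eq:J1}: writing $w=x/2$ and using the identities $(q^{\nu};q)_m=(1-q^{\nu})(q^{\nu+1};q)_m/(1-q^{\nu+m})$ and $(q^{\nu+2};q)_m=(1-q^{\nu+m+1})(q^{\nu+1};q)_m/(1-q^{\nu+1})$ (the latter together with a shift of the summation index), each of the three terms equals $\frac{(q^{\nu+1};q)_\infty}{(q;q)_\infty}\,w^{\nu-1}$ times the series $\sum_{m\ge0}\frac{(-1)^m w^{2m}}{(q^{\nu+1};q)_m(q;q)_m}\,\epsilon_m$, with $\epsilon_m$ respectively $1-q^{\nu}$, $1-q^{\nu+m}$, $-q^{\nu}(1-q^{m})$; the needed cancellation is then the scalar identity $(1-q^{\nu+m})-q^{\nu}(1-q^{m})=1-q^{\nu}$. (This contiguous relation is classical for Jackson's first $q$-Bessel function; see \cite[\S14.4]{Ismail}.)

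Next I would fix $\nu$ and set $L_n:=q^{n\nu+\binom{n}{2}}J^{(1)}_{\nu+n}(x;q)$. Substituting $\nu+n$ for $\nu$ in the contiguous relation and multiplying through by $q^{n\nu+\binom{n}{2}}$, then simplifying exponents via $\binom{n}{2}+n=\binom{n+1}{2}$ on the leading term and $n\nu+\binom{n}{2}-(n-1)\nu-\binom{n-1}{2}=\nu+n-1$ on the trailing term, gives
\[
L_{n+1}=\frac{2(1-q^{\nu+n})}{x}\,L_n-q^{\nu+n-1}\,L_{n-1},\qquad n\ge1.
\]
This is \emph{exactly} the recurrence defining $R^{(1)}_{n,\nu}(x;q)$, and, after the shift $\nu\mapsto\nu+1$, $n\mapsto n-1$, also the one governing $R^{(1)}_{n-1,\nu+1}(x;q)$. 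So I would introduce sequences $a_n,b_n$ defined by this same second-order recurrence with $a_0=1$, $a_1=2(1-q^{\nu})/x$ and $b_0=0$, $b_1=1$. Then $a_nJ^{(1)}_\nu-b_nJ^{(1)}_{\nu-1}$ satisfies the $L$-recurrence and agrees with $L_n$ at $n=0$ and $n=1$ (the latter being the contiguous relation), hence equals $L_n$ for all $n\ge0$; and comparing recurrences and initial values identifies $a_n=R^{(1)}_{n,\nu}(x;q)$ and $b_n=R^{(1)}_{n-1,\nu+1}(x;q)$, which is the claim.

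The difficulties here are purely bookkeeping. One is tracking the exponents of $q$ when pushing the prefactor $q^{n\nu+\binom{n}{2}}$ through the contiguous relation: this is precisely where the factor $q^{n\nu+\binom{n}{2}}$ in \eqref{eq:qBesLom1} arises, and nowhere else. The other is that the recurrence for the $R^{(1)}$'s is stated only for index $\ge1$, so the identifications $a_n=R^{(1)}_{n,\nu}$ and (especially) $b_n=R^{(1)}_{n-1,\nu+1}$ must be checked by hand at the bottom of the range, using $R^{(1)}_{-1,\cdot}=0$ and the value $R^{(1)}_{1,\mu}=2(1-q^{\mu})/x$. Beyond that there is no genuine obstacle: the statement is structurally identical to the classical Bessel--Lommel recurrence, with every shift of the Bessel index twisted by a power of $q$.
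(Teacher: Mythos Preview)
The paper does not supply its own proof of this proposition: it is stated with the citation \cite[(14.4.1)]{Ismail} and used as background. Your argument is correct and is exactly the standard one behind such Bessel--Lommel identities (and the one in \cite{Ismail}): verify the single contiguous relation $q^{\nu}J^{(1)}_{\nu+1}+J^{(1)}_{\nu-1}=\tfrac{2(1-q^{\nu})}{x}J^{(1)}_{\nu}$ from the series, observe that the renormalized sequence $L_n=q^{n\nu+\binom{n}{2}}J^{(1)}_{\nu+n}$ therefore obeys the defining recurrence of the $R^{(1)}$'s, and match initial data. Your exponent bookkeeping and the boundary checks $R^{(1)}_{-1,\nu+1}=0$, $R^{(1)}_{0,\nu+1}=1$, $R^{(1)}_{1,\nu+1}=2(1-q^{\nu+1})/x$ are all in order; note only that the paper's displayed initial value $R^{(1)}_{1,\nu}=2(1-q^{n+\nu})/x$ is a typo for $2(1-q^{\nu})/x$, which is the value you (correctly) use.
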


Again we need a rescaling to obtain the classical $q$-Lommel polynomials,
$$
h_n(x;c,q)=R_{n,\nu}^{(1)}(2/x;q)/(q^\nu;q)_n, \quad c=q^\nu.
$$

\begin{defn}
\label{defn:third}
The \emph{Jackson's third $q$-Bessel functions} $J_\nu^{(3)}(z;q)$ are defined by
\[
  J_\nu^{(3)}(z;q) = \frac{(q^{\nu+1};q)_\infty z^\nu}{(q;q)_\infty}
  \qhyper11{0}{q^{\nu+1}}{q,qz^2}.
\]  
\end{defn}

Define the Laurent polynomials \( R_{m,\nu}^{(3)}(z;q) \) by
\begin{equation}
  \label{eq:R}
  R_{m+1,\nu}^{(3)}(z;q) = \left( z + z^{-1}(1-q^{\nu+m}) \right) R_{m,\nu}^{(3)}(z;q)
  -R_{m-1,\nu}^{(3)}(z;q).
\end{equation}
We rescale these Laurent polynomials to obtain polynomials
\begin{equation}
  \label{eq:r=R}
  r^{(3)}_n(x;c,q):=\frac{x^{n/2}}{(q^{-\nu};q^{-1})_n} R_{n,\nu}^{(3)}(x^{-1/2};q^{-1}), \quad c=q^\nu.
\end{equation}
Then \( r^{(3)}_n(x;c,q) \) are the type $R_I$  polynomials
defined by
\( r^{(3)}_{-1}(x;c,q)=0\), \( r^{(3)}_0(x;c,q)=1 \), and 
\begin{equation}
  \label{eq:r3}
  r^{(3)}_{n+1}(x;c,q)=(x-\hat b_n)r^{(3)}_n(x;c,q)-x \hat a_nr^{(3)}_{n-1}(x;c,q), \qquad n\ge0,
\end{equation}
where
\[
 \hat b_n = \frac{cq^{n}}{1-cq^{n}}, \qquad
 \hat a_n = \frac{c^2q^{2n-1}}{(1-cq^{n-1})(1-cq^{n})}.
\]
Using the recurrences one can easily check that
\[
  r_{n}(x;c,q)  = \frac{r^{(3)}_{n}(cx;c,q)}{c^n} ,
\]
where \( r_{n}(x;c,q) \) are the type \( R_I \) \( q \)-Lommel polynomials
$r_n(x;c,q)$ in Definition~\ref{defn:typeI}.

Koelink and Swarttouw \cite[(4.12)]{Koelink_1994} showed that the third $q$-Bessel functions satisfy the
following property analogous to \eqref{prop:LomBes} and
\eqref{eq:qBesLom1}.

\begin{prop} 
The third $q$-Bessel functions satisfy
\begin{equation}
\label{eq:KS rec2}
  J_{\nu+m}^{(3)}(z;q) = R_{m,\nu}^{(3)}(z;q) J_{\nu}^{(3)}(z;q)
  -R_{m-1,\nu+1}^{(3)}(z;q) J_{\nu-1}^{(3)}(z;q).
\end{equation}
\end{prop}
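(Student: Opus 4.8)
The plan is to verify the identity \eqref{eq:KS rec2} by induction on $m$, using only the three-term recurrence \eqref{eq:R} defining the Laurent polynomials $R_{m,\nu}^{(3)}(z;q)$ together with the contiguous relation that the third $q$-Bessel functions themselves satisfy. First I would record the base cases $m=0$ and $m=1$. For $m=0$ the claim is $J_\nu^{(3)}(z;q)=1\cdot J_\nu^{(3)}(z;q)-R_{-1,\nu+1}^{(3)}(z;q)J_{\nu-1}^{(3)}(z;q)$, so I need the convention $R_{-1,\nu}^{(3)}(z;q)=0$, which is the natural initial value making \eqref{eq:R} produce $R_{1,\nu}^{(3)}(z;q)=z+z^{-1}(1-q^\nu)$ from $R_{0,\nu}^{(3)}=1$. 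The case $m=1$ then becomes exactly the statement that
\[
  J_{\nu+1}^{(3)}(z;q) = \left( z + z^{-1}(1-q^{\nu}) \right) J_{\nu}^{(3)}(z;q) - J_{\nu-1}^{(3)}(z;q),
\]
i.e.\ a three-term contiguous (lowering/raising in the order $\nu$) relation for $J_\nu^{(3)}$. I would derive this directly from the ${}_1\phi_1$ representation in Definition~\ref{defn:third}, comparing coefficients of $z^\nu$ after multiplying through, which reduces to a routine identity among the quotients $(q^{\nu+1};q)_\infty/(q;q)_\infty$ and the ${}_1\phi_1$ series; this is the relation of Koelink–Swarttouw and can be cited from \cite{Koelink_1994} if one prefers not to reprove it.

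For the inductive step, assume \eqref{eq:KS rec2} holds for $m-1$ and $m$; I want it for $m+1$. Apply the contiguous relation of the base case $m=1$ with $\nu$ replaced by $\nu+m$, namely
\[
  J_{\nu+m+1}^{(3)}(z;q) = \left( z + z^{-1}(1-q^{\nu+m}) \right) J_{\nu+m}^{(3)}(z;q) - J_{\nu+m-1}^{(3)}(z;q).
\]
Now substitute the inductive expressions for $J_{\nu+m}^{(3)}$ and $J_{\nu+m-1}^{(3)}$ in terms of $J_\nu^{(3)}$ and $J_{\nu-1}^{(3)}$. Collecting the coefficient of $J_\nu^{(3)}(z;q)$ gives
\[
  \left( z + z^{-1}(1-q^{\nu+m}) \right) R_{m,\nu}^{(3)}(z;q) - R_{m-1,\nu}^{(3)}(z;q),
\]
which is precisely $R_{m+1,\nu}^{(3)}(z;q)$ by \eqref{eq:R}. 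Collecting the coefficient of $-J_{\nu-1}^{(3)}(z;q)$ gives
\[
  \left( z + z^{-1}(1-q^{\nu+m}) \right) R_{m-1,\nu+1}^{(3)}(z;q) - R_{m-2,\nu+1}^{(3)}(z;q),
\]
which is $R_{m,\nu+1}^{(3)}(z;q)$ by \eqref{eq:R} with $\nu$ shifted to $\nu+1$ and $m$ to $m-1$. This closes the induction and proves \eqref{eq:KS rec2} for all $m\ge 0$.

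The one genuinely nontrivial ingredient is the base-case contiguous relation for $J_\nu^{(3)}$; everything after that is a two-line bookkeeping of how \eqref{eq:R} reassembles the coefficients. So the main obstacle is the explicit verification of
\[
  J_{\nu+1}^{(3)}(z;q) + J_{\nu-1}^{(3)}(z;q) = \left( z + z^{-1}(1-q^{\nu}) \right) J_{\nu}^{(3)}(z;q),
\]
which amounts to showing that a suitable combination of ${}_1\phi_1\!\left(0;q^{\nu+1};q,qz^2\right)$-type series at parameters $\nu-1,\nu,\nu+1$ telescopes; matching the coefficient of $z^{2k+\nu}$ on both sides reduces to an elementary recurrence in $k$ for the coefficients $\dfrac{(-1)^k q^{\binom{k}{2}} q^k}{(q;q)_k (q^{\nu+1};q)_k}$ of the ${}_1\phi_1$, and one checks it term by term. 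If one wishes to keep the paper self-contained this computation should be spelled out; otherwise it is exactly \cite[(4.12)]{Koelink_1994} and may simply be quoted, in which case the proposition above is purely the induction argument of the previous paragraph.
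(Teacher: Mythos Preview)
Your induction argument is correct. Note, however, that the paper does not give its own proof of this proposition: it simply attributes the identity to Koelink and Swarttouw \cite[(4.12)]{Koelink_1994} and moves on. The proof you outline---verify the $m=1$ contiguous relation for $J_\nu^{(3)}$ directly from the ${}_1\phi_1$ series, then induct using \eqref{eq:R} and the observation that $(\nu+1)+(m-1)=\nu+m$ makes the shifted recurrence match---is exactly the standard argument, and is presumably what appears in the cited reference. So there is nothing to compare against in the paper itself; your write-up would serve as a self-contained replacement for the citation.
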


Koelink and Swarttouw \cite[(4,24)]{Koelink_1994} also showed that
\begin{equation}
  \label{eq:KSlim}
  \lim_{m\to\infty} z^m R^{(3)}_{m,\nu}(z;q)
  = \frac{(q;q)_\infty z^{1-\nu}}{(z^2;q)_\infty} J^{(3)}_{\nu-1}(z;q),
\end{equation}
which implies
\begin{equation}\label{eq:R=J}
  \lim_{m\to\infty}  \frac{R^{(3)}_{m,\nu+2}(z;q)}{R^{(3)}_{m+1,\nu+1}(z;q)}
  = \frac{J^{(3)}_{\nu+1}(z;q)}{J^{(3)}_\nu(z;q)} .
\end{equation}
By \eqref{eq:r=R} and \eqref{eq:R=J} we have
\begin{equation}\label{eq:r=J}
   \frac{J^{(3)}_{\nu+1}(x^{1/2};q^{-1})}{J^{(3)}_\nu(x^{1/2};q^{-1})}= 
   \lim_{n\to\infty}  \frac{-q^{\nu+1}r^{(3)}_{n}(x^{-1};q^{\nu+2},q)}{x^{1/2}(1-q^{\nu+1})r^{(3)}_{n+1}(x^{-1};q^{\nu+1},q)}.
\end{equation}

The $q$-Bessel function relation for the 
even-odd $q$-Lommel polynomials which corresponds to Proposition~\ref{prop:qBesLom1} 
is given in \cite[Proposition 4.1]{Koelink1995}.
 
\section{$q$-Lommel polynomials and the Askey scheme}
\label{sec:q-lommel-polynomials-1}

The $q$-Lommel polynomials do not appear in the Askey scheme. 
In this section we realize both the classical $q$-Lommel and the even-odd $q$-Lommel 
polynomials as limiting cases of the associated Askey--Wilson polynomials, see 
Theorems~\ref{thm:limit1} and \ref{thm:limit2}. 
We then use results of Ismail and Masson \cite{IsmailMasson} to give explicit formulas for each polynomial.
Finally we prove that the moments for even-odd $q$-Lommel and the 
type $R_I$ $q$-Lommel agree, see Theorem~\ref{thm:equalmom}.

An explicit formula for the Lommel polynomial \( h_n(x;c) \) is 
\[
h_n(x;c)=x^n{}_2F_3(-n/2,(1-n)/2;c,1-c-n,-n;-4/x^2).
\]
In this section we give explicit formulas for our three families of \( q
\)-Lommel polynomials. The classical \( q \)-Lommel polynomials have a
corresponding single sum formula \cite[Theorem~14.4.1]{Ismail}:
\[
  h_{n}(x ;c, q)= \frac{1}{(c;q)_n}
  \sum_{j=0}^{\lfloor n / 2\rfloor} \frac{(-1)^{j}\left(c, q ; q\right)_{n-j}}
  {\left(q, c ; q\right)_{j}(q ; q)_{n-2 j}}x^{n-2 j} c^jq^{j(j-1)}.
\]

Here are the main results for the even-odd $q$-Lommel polynomials.

\begin{thm} 
\label{thm:explicit1}
The even even-odd $q$-Lommel polynomials have the explicit formula
\begin{align*}
p_{2n}(x;c,q) &=(-1)^n\frac{q^{\binom{n}{2}}}{(c;q)_{2n}}
\sum_{k=0}^n \frac{(q^{-n},cq^n,c;q)_k}{(q;q)_k}q^{k} x^{2k}\\
&\quad\times \sum_{s=0}^{n-k} \frac{(cq^{k-1};q)_s}{(q;q)_s} 
\frac{1-cq^{k-1+2s}}{1-cq^{k-1}} \frac{(cq^{n+k},q^{k-n},q^{k};q)_s}{(q^{-n},cq^{n},c;q)_s}
c^{s} q^{-sk+s(s-1)}.\\
\end{align*}
\end{thm}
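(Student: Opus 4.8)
The plan is to establish the explicit formula for $p_{2n}(x;c,q)$ by showing that the claimed double-sum expression satisfies the correct recurrence, and then identify the inner and outer sums with known hypergeometric quantities. First I would observe that the even-odd $q$-Lommel polynomials $p_n(x;c,q)$ split naturally into even and odd parts: since the recurrence in Definition~\ref{defn:evenodd} has the form $p_{n+1}=xp_n-\lambda_np_{n-1}$, the polynomial $p_{2n}$ is a polynomial in $x^2$ of degree $n$, and one can pass to a new variable $u=x^2$ to obtain a three-term recurrence for the sequence $\widetilde p_n(u)=p_{2n}(x;c,q)$ with coefficients built from $\lambda_{2n-1}\lambda_{2n-2}$ (the "contraction" of the Jacobi matrix). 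Explicitly, the even polynomials satisfy a recurrence of the form $\widetilde p_{n+1}(u)=(u-B_n)\widetilde p_n(u)-A_n\widetilde p_{n-1}(u)$ where $B_n=\lambda_{2n}+\lambda_{2n-1}$ and $A_n=\lambda_{2n-1}\lambda_{2n-2}$; plugging in the explicit $\lambda$'s from Definition~\ref{defn:evenodd} gives rational functions of $cq^n$.

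Next I would recognize that this contracted recurrence is (up to normalization) exactly the recurrence for the associated Askey--Wilson polynomials in the appropriate limiting regime, which is the content of Theorem~\ref{thm:limit2} referenced earlier in the section. So the strategy is: take the known explicit formula for the associated Askey--Wilson polynomials from Ismail and Masson \cite{IsmailMasson}, specialize the parameters according to the limit in Theorem~\ref{thm:limit2}, carefully track the rescalings (the factors of $(1-q)$, the substitution $c=q^\nu$, and the monic normalization $(c;q)_{2n}$ in the denominator), and take the limit term by term. The outer sum over $k$ (with the factor $(q^{-n},cq^n,c;q)_k q^k x^{2k}/(q;q)_k$) should come from the "outer" $q$-hypergeometric structure of the Askey--Wilson formula, and the inner sum over $s$ — which has the shape of a very-well-poised ${}_8\phi_7$ truncated by $q^{k-n}$, visible from the factor $\frac{1-cq^{k-1+2s}}{1-cq^{k-1}}$ that signals well-poisedness — should be the associated part. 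The prefactor $(-1)^n q^{\binom n2}/(c;q)_{2n}$ collects the leading-coefficient normalization and the $q$-powers accumulated from the $\lambda_n$'s.

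The main obstacle will be the bookkeeping in the limit: the associated Askey--Wilson polynomials depend on five parameters plus the association parameter, and Theorem~\ref{thm:limit2} sends several of them to $0$ or $\infty$ at coordinated rates, so one must verify that each term of the double sum has a finite nonzero limit and that no terms collapse or blow up — in particular that the truncation of the inner ${}_8\phi_7$ at $s=n-k$ survives the limit correctly and that the balancing/well-poised conditions are preserved. A cleaner alternative, which I would pursue in parallel as a consistency check, is a direct induction on $n$: verify the formula for $n=0,1$, then substitute the double sum into the contracted recurrence $\widetilde p_{n+1}=(u-B_n)\widetilde p_n-A_n\widetilde p_{n-1}$ and reduce the resulting identity, after extracting the coefficient of $x^{2k}$, to a single-variable $q$-hypergeometric contiguous relation for the inner ${}_8\phi_7$ (a known consequence of Bailey's very-well-poised summation machinery). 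Whichever route is used, the final step is to match normalizations at $x=0$ and at the top degree $x^{2n}$, where the formula reduces to a $q$-binomial-type identity that pins down the constant.
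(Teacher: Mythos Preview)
Your approach is essentially the paper's: contract the recurrence via the odd--even trick (Proposition~\ref{prop:firstprop}), identify the contracted polynomials as a limit of associated Askey--Wilson polynomials, and specialize the Ismail--Rahman double-sum solutions (Theorem~\ref{thm:IM}) in that limit, matching initial conditions to select the correct solution $\psi_n^{(\alpha,2)}$. A few small corrections: the relevant limit is Theorem~\ref{thm:limit1} (not Theorem~\ref{thm:limit2}, which treats the classical $q$-Lommel family $h_n$), the contracted coefficients are $B_n=\lambda_{2n}+\lambda_{2n+1}$ and $\Lambda_n=\lambda_{2n-1}\lambda_{2n}$ rather than the shifted versions you wrote, and the explicit associated Askey--Wilson formula is due to Ismail--Rahman with inner sum a ${}_{10}W_9$ before the limit.
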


\begin{thm} 
\label{thm:explicit2}
The odd even-odd $q$-Lommel polynomials have the explicit formula
\begin{align*}
p_{2n+1}(x;c,q) &=(-c)^n\frac{q^{n^2+\binom{n+1}{2}}}{(cq;q)_{2n}}
\sum_{k=0}^n \frac{(q^{-n},cq^{n+1},cq;q)_k}{(q;q)_k}c^{-k}q^{-k^2} x^{2k+1}\\
&\quad\times \sum_{s=0}^{n-k} \frac{(cq^{k};q)_s}{(q;q)_s} 
\frac{1-cq^{k+2s}}{1-cq^{k}} \frac{(cq^{n+k+1},q^{k-n},q^{k+1};q)_s}{(q^{-n},cq^{n+1},c;q)_s}
c^{s} q^{-(3k+2)s-s(s-1)}.\\
\end{align*}
\end{thm}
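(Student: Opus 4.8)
The plan is to establish this exactly as Theorem~\ref{thm:explicit1} is established: realize the odd-indexed even-odd $q$-Lommel polynomials as a limiting case of the associated Askey--Wilson polynomials (the content of Theorem~\ref{thm:limit2}), specialize the Ismail--Masson explicit expansion of the associated Askey--Wilson polynomials \cite{IsmailMasson}, and pass to the limit summand by summand. The Ismail--Masson formula has the shape ``outer sum over a degree-type index, times an inner very-well-poised ${}_8\phi_7$'', and the very-well-poised factor $1-\alpha q^{2s}$ is exactly what becomes $1-cq^{k+2s}$ after the specialization; thus the double sum in the statement is inherited from that structure. Concretely, I would insert the parameter substitution of Theorem~\ref{thm:limit2}, monic-normalize (this produces the prefactor $(-c)^n q^{n^2+\binom{n+1}{2}}/(cq;q)_{2n}$ together with the outer factor $(q^{-n},cq^{n+1},cq;q)_k\,c^{-k}q^{-k^2}/(q;q)_k$ coming from the collapse of the Askey--Wilson numerator parameters), and check that the inner ${}_8\phi_7$ degenerates to the displayed terminating $s$-sum. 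Since both sums are finite, the limit is taken term by term.

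A self-contained alternative runs everything off Theorem~\ref{thm:explicit1}. Since the recurrence of Definition~\ref{defn:evenodd} produces polynomials of parity $m$, write $p_{2m}(x;c,q)=Q_m(x^2;c,q)$ and $p_{2m+1}(x;c,q)=xR_m(x^2;c,q)$; the two instances of that recurrence at even and odd index then become $R_n=Q_n-\lambda_{2n}R_{n-1}$ and $Q_n=yR_{n-1}-\lambda_{2n-1}Q_{n-1}$. Iterating the first gives $R_n(y)=\sum_{j=0}^{n}(-1)^j\big(\prod_{i=0}^{j-1}\lambda_{2(n-i)}\big)Q_{n-j}(y)$, and the $\lambda$-product telescopes into $\prod_{i=0}^{j-1}\lambda_{2(n-i)}=c^j q^{3nj-3\binom{j}{2}-j}(cq;q)_{2n-2j}/(cq;q)_{2n}$. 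Substituting the formula of Theorem~\ref{thm:explicit1} for $Q_{n-j}(x^2)$, extracting the coefficient of $x^{2k+1}$, and collapsing the resulting nested $(j,s)$-sum into a single $s$-sum by a terminating very-well-poised summation (a $q$-Dougall / ${}_6\phi_5$ evaluation) recovers the stated expression; equivalently, one may skip the iteration and verify the formula by induction on $n$ against $R_n=Q_n-\lambda_{2n}R_{n-1}$, where only a single contiguous relation among three very-well-poised series must be checked.

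The one real obstacle in either route is this very-well-poised summation/contiguous identity in the inner variable $s$ --- the contraction of the $(j,s)$-double sum to a single $s$-sum in the self-contained route, or the verification that the inner ${}_8\phi_7$ in the Ismail--Masson expansion limits to the displayed $s$-sum with no spurious lower-order contributions in the limit route. I would isolate and prove that identity first (via the terminating ${}_6\phi_5$ summation or creative telescoping in $s$); everything else is bookkeeping. Two checks fix the normalization: the coefficient of $x^{2n+1}$ on the right-hand side equals $1$ --- using $(q^{-n};q)_n=(-1)^n q^{\binom{n}{2}-n^2}(q;q)_n$, $(cq;q)_n(cq^{n+1};q)_n=(cq;q)_{2n}$, and $\binom{n+1}{2}+\binom{n}{2}=n^2$, so $p_{2n+1}$ is monic as required --- and the three-term recurrence of Definition~\ref{defn:evenodd}, transported through either route, then determines the polynomial uniquely.
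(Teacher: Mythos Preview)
Your first route is exactly the paper's approach: write $p_{2n+1}(x;c,q)=x\,s_n(x^2)$, realize $s_n$ as a limit of associated Askey--Wilson polynomials, and specialize the double-sum representation of Theorem~\ref{thm:IM} term by term. Two small corrections: the relevant limit statement is Theorem~\ref{thm:limit1} (your reference to Theorem~\ref{thm:limit2} points to the \emph{classical} $q$-Lommel polynomials, not the even-odd ones), and the inner sum in Theorem~\ref{thm:IM} is a ${}_{10}W_9$ rather than an ${}_8\phi_7$; after the substitution $(a,b,c,d)=(cq^2\alpha,1/\alpha,1/\alpha,1/\alpha)$ and $\alpha\to\infty$ it does collapse to the displayed $s$-sum, and your identification of $(1-cq^{k+2s})/(1-cq^k)$ as the surviving very-well-poised factor is correct. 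The paper selects the $\epsilon=1$ solution $\psi_n^{(\alpha,1)}$ by matching the degree-one polynomial $s_1(x)=x-(1+cq)/\bigl((1-c)(1-cq^2)\bigr)$, which is the normalization check you describe.

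Your second route --- iterate $R_n=Q_n-\lambda_{2n}R_{n-1}$ against Theorem~\ref{thm:explicit1} and contract the resulting $(j,s)$-sum by a terminating very-well-poised evaluation, or equivalently verify the formula inductively against that recurrence --- is not what the paper does and is more self-contained, avoiding the associated Askey--Wilson machinery entirely. The cost is the summation/contiguous identity you isolate as the one real obstacle; the limit route sidesteps it by inheriting the double-sum shape directly from Theorem~\ref{thm:IM}.
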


\begin{proof}
First we write the even even-odd polynomials as orthogonal polynomials in $x^2$ using the 
odd-even trick.  Then we realize the new polynomials as limiting cases of 
associated Askey--Wilson polynomials, for which explicit formulas are known.
The same method will work for the odd even-odd polynomials.

We begin with the associated Askey--Wilson polynomials.
The monic Askey--Wilson polynomials satisfy
\begin{equation}
p_{n+1}(x) = (x-b_n) p_n(x) - \lambda_n p_{n-1}(x), \quad n\ge 1,
\end{equation}
where 
\[
  b_n = \frac{1}{2} (a+a^{-1}-A_n-C_n), \qquad
  \lambda_n = \frac{1}{4} A_{n-1} C_n,
\]
\begin{align*}
  A_n &= \frac{(1-abq^n)(1-acq^n)(1-adq^n)(1-abcdq^{n-1})}{a(1-abcdq^{2n-1})(1-abcdq^{2n})},\\
  C_n &= \frac{a(1-q^n)(1-bcq^{n-1})(1-bdq^{n-1})(1-cdq^{n-1})}{(1-abcdq^{2n-2})(1-abcdq^{2n-1})}.
\end{align*}

The associated Askey--Wilson polynomials replace $q^n$ by $\alpha q^n$ in the three-term recurrence relation.

\begin{defn}
  The \emph{associated Askey--Wilson polynomials} $p_n^{(\alpha)}(x)$ are defined as a solution to 
\begin{equation}
\label{eq:alphaAW}
p_{n+1}^{(\alpha)}(x) = (x-b_n(\alpha)) p_n^{(\alpha)}(x) - \lambda_n(\alpha) p_{n-1}^{(\alpha)}(x), \quad n\ge 1,
\end{equation}
\[
  b_n(\alpha) = \frac{1}{2} (a+a^{-1}-A_n(\alpha)-C_n(\alpha)), \qquad
  \lambda_n(\alpha) = \frac{1}{4} A_{n-1}(\alpha) C_n(\alpha),
\]
\begin{align*}
  A_n(\alpha,q) &= \frac{(1-ab\alpha q^n)(1-ac\alpha q^n)(1-ad\alpha q^n)(1-abcd\alpha q^{n-1})}
        {a(1-abcd \alpha^2q^{2n-1})(1-abcd \alpha^2q^{2n})},\\
  C_n(\alpha,q) &= \frac{a(1-\alpha q^n)(1-bc\alpha q^{n-1})(1-bd\alpha q^{n-1})(1-cd\alpha q^{n-1})}
        {(1-abcd \alpha^2q^{2n-2})(1-abcd \alpha^2q^{2n-1})}.
\end{align*}
\end{defn}

There are two linearly independent solutions to \eqref{eq:alphaAW}, depending on the initial conditions.
Ismail and Rahman \cite[(4.15), (8.9)]{Ismail} gave these two independent solutions 
as double sums, the inner sum a very well poised $ _{10}W_9$. 

\begin{thm} 
\label{thm:IM}
Two linearly independent solutions $\psi_n^{(\alpha,\epsilon)}(x,q), \epsilon=1,2$ 
to \eqref{eq:alphaAW} are given by
\begin{align*}
&\psi_n^{(\alpha,\epsilon)}(x;q)=  K_n\sum_{k=0}^n \frac{(q^{-n},abcd\alpha^2 q^{n-1}, abcd\alpha^2/q,az,a/z;q)_k}
{(q,ab\alpha,ac\alpha,ad\alpha, abcd\alpha/q;q)_k} q^k\\
&
\times{}_{10}W_9
\left( abcd\alpha^2 q^{k-2};\alpha, bc\alpha/q, bd\alpha/q, cd\alpha/q, S,abcd\alpha^2q^{n+k-1},q^{k-n};q; T\right)
\end{align*}
where 
$$
K_n= (2a)^{-n} \frac{(ab\alpha, ac\alpha, ad\alpha, abcd\alpha/q;q)_n}
{(abcd\alpha^2q^{n-1},abcd\alpha^2/q;q)_n}
$$
and the two choices for $\epsilon$ correspond to 
$$
(S,T)= (q^{k+1},a^2), {\text{ for $\epsilon=1,$ }} (S,T)= (q^{k},qa^2),{ \text{ for $\epsilon=2.$}}
$$
\end{thm}

We next explain how Theorem~\ref{thm:explicit1} follows from Theorem~\ref{thm:IM}.

First we rewrite the recurrence relation \cite{Chihara} in terms of polynomials in $x^2$. 

\begin{prop} 
\label{prop:firstprop}
If $p_{2n}(x;c,q)=t_n(x^2),$ then
$$
t_{n+1}(x)=(x-B_n)t_n(x)-\Lambda_n t_{n-1}(x), \quad t_{-1}=0, \quad t_0(x)=1.
$$
where
\begin{align*}
B_0&=\frac{1}{(1-c)(1-cq)},\\
 B_n &= \lambda_{2n}+\lambda_{2n+1},
 \quad n\ge 1,\\
\Lambda_n &= \lambda_{2n-1}\lambda_{2n},
\quad n\ge 1.
\end{align*}
\end{prop}

 \begin{prop} 
 \label{prop:secondprop}
 If $p_{2n+1}(x;c,q)=xs_n(x^2),$ then
$$
s_{n+1}(x)=(x-B_n)s_n(x)-\Lambda_n s_{n-1}(x), \quad s_{-1}=0, \quad s_0(x)=1.
$$
where
\begin{align*}
 B_n &= \lambda_{2n+2}+\lambda_{2n+1},
 \quad n\ge 1,\\
\Lambda_n &= \lambda_{2n+1}\lambda_{2n},
\quad n\ge 1.
\end{align*}
\end{prop}

We shall obtain the recurrence relations in Propositions~\ref{prop:firstprop} and ~\ref{prop:secondprop}
by an appropriate limiting case of Theorem~\ref{thm:IM}. Our goal is to obtain 
$(A_n,C_n)=(\lambda_{2n+1},\lambda_{2n})$ for $t_n(x)$ and 
$(A_n,C_n)=(\lambda_{2n+2},\lambda_{2n+1})$ for $s_n(x).$ Then we match the 
initial conditions to find the correct linear combination of the two solutions.

First choosing $a=c^{-1}q^{-1}\alpha$, $b=c=d=1/\alpha$, we obtain
\begin{align*}
  A_n(\alpha,1/q) &= \frac{\alpha(1-c q^{n+1}/\alpha)^3(1-\alpha cq^n)}{cq(1-c q^{2n})(1-c q^{2n+1})},\\
  C_n(\alpha,1/q) &= \frac{cq(1-q^n/\alpha )(1-\alpha q^{n-1})^3}{\alpha(1-c q^{2n-1})(1-c q^{2n})}.
\end{align*}

By rescaling $x$ by $B\alpha^2 x/2$, i.e., $\hat p_n(x) = 2^n \alpha^{-2n} B^{-n}p_n^{(\alpha)}(B\alpha^2 x/2)$,
we have
\[
\hat p_{n+1}(x) = (x-\hat b_n(\alpha)) \hat p_n(x) - \hat \lambda_n(\alpha) \hat p_{n-1}(x),
\]
\begin{align*}
 \hat b_n(\alpha) &=   \frac{1}{B\alpha^2}\left( \frac{cq}{\alpha}+\frac{\alpha}{cq} 
  -A_n(\alpha,1/q)-C_n(\alpha,1/q)\right),\\
  \hat\lambda_n(\alpha) &= 
 \frac{1}{B^2\alpha^4}A_n(\alpha,1/q)C_n(\alpha,1/q).
\end{align*}
If $\alpha\to \infty$, the first two terms in $\hat b_n(\alpha)$ vanish. 
Choosing $B=1/q$, we obtain the desired values for Proposition~\ref{prop:firstprop}
\begin{align*}
\lim_{\alpha\to\infty}  \frac{-1}{B\alpha^2} A_n(\alpha,1/q) &=\frac{q^n}{(1-cq^{2n})(1-cq^{2n+1})}= \lambda_{2n+1},\\
\lim_{\alpha\to\infty}  \frac{-1}{B\alpha^2} C_n(\alpha,1/q) &= \frac{cq^{3n-1}}{(1-cq^{2n-1})(1-cq^{2n})}= \lambda_{2n}.
\end{align*}

The first degree limiting polynomial matches the second Ismail-Rahman solution  
in Theorem~\ref{thm:IM} with
$(a,b,c,d)=(\alpha/cq,1/\alpha,1/\alpha,1/\alpha)$, 
$$
x-\frac{1}{(1-c)(1-cq)}
$$
so that 
$$
\lim_{\alpha\to\infty} \hat p_n(x)= \lim_{\alpha\to\infty}\psi_n^{(\alpha,2)}( x;1/q),
$$
which is the stated explicit formula in Theorem~\ref{thm:explicit1}.
\end{proof}

For the odd even-odd polynomials in Proposition~\ref{prop:secondprop}, we choose
$(a,b,c,d)=(cq^{2}\alpha,1/\alpha,1/\alpha,1/\alpha)$,

\begin{align*}
  A_n(\alpha,q) &= \frac{(1-c \alpha q^{n+2})^3(1-cq^{n+1}/\alpha)}{\alpha cq^2(1-c q^{2n+1})(1-c q^{2n+2})},\\
  C_n(\alpha,q) &= \frac{\alpha cq^2(1-\alpha q^n)(1-q^{n-1}/\alpha)^3}{(1-c q^{2n})(1-c q^{2n+1})}.
\end{align*}
As before choosing $\hat p_n(x) = 2^n \alpha^{-2n} B^{-n}p_n^{(\alpha)}(B\alpha^2 x/2)$ and $B=-cq^2$ we find

\begin{align*}
\lim_{\alpha\to\infty}  \frac{1}{B\alpha^2} A_n(\alpha,q) &=\frac{cq^{3n+2}}{(1-cq^{2n+1})(1-cq^{2n+2})}= \lambda_{2n+2},\\
\lim_{\alpha\to\infty}  \frac{1}{B\alpha^2} C_n(\alpha,q) &= \frac{q^{n}}{(1-cq^{2n})(1-cq^{2n+1})}= \lambda_{2n+1}.
\end{align*}

The first degree limiting polynomial matches the first Ismail--Rahman solution  
in Theorem~\ref{thm:IM} with
$(a,b,c,d)=(cq^2\alpha,1/\alpha,1/\alpha,1/\alpha)$, 
$$
x-\frac{1+cq}{(1-c)(1-cq^2)}
$$
so that 
$$
\lim_{\alpha\to\infty} \hat p_n(x)= \lim_{\alpha\to\infty}\psi_n^{(\alpha,1)}(x;q),
$$
which is the stated explicit formula in Theorem~\ref{thm:explicit2}.

We summarize these limits for the even-odd $q$-Lommel polynomials.
 
\begin{thm}
\label{thm:limit1} The even-odd $q$-Lommel polynomials are the following limits of 
associated Askey--Wilson polynomials
\begin{align*}
p_{2n}(x;c,q) &=\lim_{\alpha\to\infty}\frac{(2q)^n}{\alpha^{2n}} \psi_n^{(\alpha,2)}(\alpha^2 x^2/2q;1/q), 
\quad (a,b,c,d)=(\alpha/cq,1/\alpha,1/\alpha,1/\alpha),\\
p_{2n+1}(x;c,q)&= x\lim_{\alpha\to\infty}\frac{(-2/cq^2)^n}{\alpha^{2n}} \psi_n^{(\alpha,1)}(-cq^2\alpha^2 x^2/2;q), 
\quad (a,b,c,d)=(cq^2\alpha,1/\alpha,1/\alpha,1/\alpha).
\end{align*}

\end{thm}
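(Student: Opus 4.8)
The statement collects the two limiting identities established along the way in the proofs of Theorems~\ref{thm:explicit1} and \ref{thm:explicit2}, so the plan is simply to record cleanly the construction those proofs carry out. First I would apply the odd-even trick of Propositions~\ref{prop:firstprop} and \ref{prop:secondprop}: writing $p_{2n}(x;c,q)=t_n(x^2)$ and $p_{2n+1}(x;c,q)=xs_n(x^2)$ reduces the problem to identifying the monic polynomials $t_n$ and $s_n$, in the variable $x^2$, as $\alpha\to\infty$ limits of suitably rescaled associated Askey--Wilson polynomials.

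For the even case I would take the associated Askey--Wilson recurrence \eqref{eq:alphaAW} in base $1/q$ with $(a,b,c,d)=(\alpha/cq,1/\alpha,1/\alpha,1/\alpha)$, form $\hat p_n(y)=2^n\alpha^{-2n}B^{-n}p_n^{(\alpha)}(B\alpha^2 y/2)$ with $B=1/q$, and pass to the limit in the recurrence coefficients. As computed in the proof of Theorem~\ref{thm:explicit1}, the $a+a^{-1}$ contribution to $b_n$ disappears and the limits of $-\tfrac{1}{B\alpha^2}A_n$ and $-\tfrac{1}{B\alpha^2}C_n$ are $\lambda_{2n+1}$ and $\lambda_{2n}$ respectively, i.e., the coefficients $\Lambda_n,B_n$ ($n\ge1$) of Proposition~\ref{prop:firstprop}. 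For the odd case I would do the same in base $q$ with $(a,b,c,d)=(cq^2\alpha,1/\alpha,1/\alpha,1/\alpha)$ and $B=-cq^2$, recovering the coefficients of Proposition~\ref{prop:secondprop}. In both cases the limiting three-term recurrence agrees with the target recurrence for all $n\ge1$.

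Since that recurrence has a two-dimensional solution space, I would invoke Theorem~\ref{thm:IM}. Under these specializations each Ismail--Rahman solution $\psi_n^{(\alpha,\epsilon)}$ has a well-defined $\alpha\to\infty$ limit: the entries of the inner ${}_{10}W_9$ converge, and since one of those entries is $q^{k-n}$ the inner series terminates after at most $n-k+1$ terms, so for small $n$ there is only a short sum to analyze. It then remains to pick out the correct solution by matching initial data. One has $\psi_0^{(\alpha,\epsilon)}=1$ outright, and the decisive point is the rescaled $n=1$ limit: one must verify it equals $x^2-B_0$ with the right $B_0$, namely $p_2(x;c,q)=x^2-\tfrac{1}{(1-c)(1-cq)}$ in the even case and $p_3(x;c,q)/x=x^2-\tfrac{1+cq}{(1-c)(1-cq^2)}$ in the odd case. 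This forces $\epsilon=2$ (even) and $\epsilon=1$ (odd) and simultaneously fixes the prefactors $(2q)^n\alpha^{-2n}$, $(-2/cq^2)^n\alpha^{-2n}$ and the argument substitutions $y=\alpha^2x^2/2q$, $y=-cq^2\alpha^2x^2/2$. An induction using the now-matching recurrence ($n\ge1$) then shows that these rescaled limits of $\psi_n^{(\alpha,2)}$ and $\psi_n^{(\alpha,1)}$ equal $t_n$ and $s_n$ for all $n$, and undoing the substitutions (and restoring the factor $x$ in the odd case) yields the two displayed identities.

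The main obstacle is this degree-one matching. It is not delivered by the limiting recurrence, because \eqref{eq:alphaAW} is only imposed for $n\ge1$; indeed $B_0$ for $t_n$ is genuinely exceptional --- it equals $\lambda_1=\tfrac{1}{(1-c)(1-cq)}$, not the value $\lambda_0+\lambda_1$ that the generic formula $\lambda_{2n}+\lambda_{2n+1}$ would give at $n=0$, which is exactly why Proposition~\ref{prop:firstprop} lists $B_0$ separately. So $x^2-B_0$ has to be extracted directly from the explicit Ismail--Rahman solution at $n=1$, meaning one tracks the $\alpha\to\infty$ asymptotics of several $q$-shifted factorial products together with the short ${}_{10}W_9$, while keeping the normalizing constants and the sign of $B$ straight. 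This calculation is also what distinguishes $\epsilon=2$ from $\epsilon=1$: only one of the two Ismail--Rahman solutions limits to the desired monic polynomial. Everything else is routine, if lengthy, evaluation of limits of recurrence coefficients.
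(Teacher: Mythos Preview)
Your proposal is correct and follows essentially the same route as the paper: Theorem~\ref{thm:limit1} is explicitly presented as a summary of the limit computations carried out in the proofs of Theorems~\ref{thm:explicit1} and \ref{thm:explicit2}, and your plan reproduces those computations step for step (odd-even trick via Propositions~\ref{prop:firstprop}--\ref{prop:secondprop}, the same choices of $(a,b,c,d)$ and rescaling constant $B$, limiting the recurrence coefficients, then matching the degree-one polynomial to select $\epsilon=2$ or $\epsilon=1$). Your discussion of the degree-one matching and the exceptional $B_0$ is, if anything, more explicit than the paper's, which simply asserts that the first degree limiting polynomial matches the appropriate Ismail--Rahman solution.
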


For the classical $q$-Lommel polynomials $h_n(x;c;q)$, for the even polynomials choose
\begin{align*}
(a,b,c,d)&=(1, q/\alpha^2,c,1/\alpha),\\
\hat p_n(x) &= 2^n \alpha^{-2n} (-1)^{n}p_n^{(\alpha)}(-\alpha^2 x/2),\\ 
t_{2n}(x)&= 2^n \alpha^{-2n} (-1)^{n}\lim_{\alpha\to\infty}\psi_n^{(\alpha,2)}( -\alpha^2 x^2/2;q) 
\end{align*}
and for the odd polynomials choose
\begin{align*}
(a,b,c,d)&=(1, q^2/\alpha^2,c,1/\alpha),\\
\hat p_n(x) &= 2^n \alpha^{-2n} (-q)^{n}p_n^{(\alpha)}(-\alpha^2 x/2q),\\ 
s_{2n}(x)&= 2^n \alpha^{-2n} (-q)^{n}\lim_{\alpha\to\infty}\psi_n^{(\alpha,1)}( -\alpha^2 x^2/2q;q). 
\end{align*}

\begin{thm}
\label{thm:limit2} The classical $q$-Lommel polynomials are the following limits of 
associated Askey--Wilson polynomials
\begin{align*}
h_{2n}(x;c,q)&=\lim_{\alpha\to\infty}\frac{(-2)^n}{\alpha^{2n}} \psi_n^{(\alpha,2)}(-\alpha^2 x^2/2;q), 
\quad (a,b,c,d)=(1, q/\alpha^2,c,1/\alpha),\\
h_{2n+1}(x;c,q)&= x\lim_{\alpha\to\infty}\frac{(-2q)^n}{\alpha^{2n}} \psi_n^{(\alpha,1)}(-\alpha^2 x^2/2q;q), 
\quad (a,b,c,d)=(1, q^2/\alpha^2,c,1/\alpha).
\end{align*}

\end{thm}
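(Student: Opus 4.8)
The plan is to run, nearly verbatim, the argument already used for Theorem~\ref{thm:limit1}. Starting from the defining recurrence $h_{n+1}(x;c,q)=xh_n(x;c,q)-\lambda_n h_{n-1}(x;c,q)$, in which every $b_n$ vanishes and $\lambda_n=cq^{n-1}/\bigl((1-cq^{n-1})(1-cq^{n})\bigr)$, I first apply the odd--even trick: the even part $h_{2n}(x;c,q)=t_n(x^2)$ and the odd part $h_{2n+1}(x;c,q)=xs_n(x^2)$ satisfy monic three-term recurrences of the shape in Propositions~\ref{prop:firstprop} and \ref{prop:secondprop}, with $B_0=\lambda_1$, $B_n=\lambda_{2n}+\lambda_{2n+1}$, $\Lambda_n=\lambda_{2n-1}\lambda_{2n}$ for $t_n$, and $B_0=\lambda_1+\lambda_2$, $B_n=\lambda_{2n+1}+\lambda_{2n+2}$, $\Lambda_n=\lambda_{2n}\lambda_{2n+1}$ for $s_n$. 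Evaluating the constant terms with the explicit $\lambda_n$ gives $B_0=c/\bigl((1-c)(1-cq)\bigr)$ for $t_n$ and $B_0=c(1+q)/\bigl((1-c)(1-cq^{2})\bigr)$ for $s_n$; these two numbers are the initial data to be matched at the end.

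Second, I feed the associated Askey--Wilson recurrence the parameter choices displayed just before the statement, namely $(a,b,c,d)=(1,q/\alpha^2,c,1/\alpha)$ for the even family and $(a,b,c,d)=(1,q^2/\alpha^2,c,1/\alpha)$ for the odd family, apply the same rescaling-and-normalization device used in the proof of Theorem~\ref{thm:limit1} (with the explicit $\hat p_n$ displayed just before the present statement), and send $\alpha\to\infty$. As in that proof, the $a+a^{-1}$ contribution to $\hat b_n(\alpha)$ and the genuinely $\alpha$-growing parts of $A_n(\alpha,q)$ and $C_n(\alpha,q)$ must cancel, leaving $\lim_{\alpha\to\infty}$ of the rescaled $A_n$ and $C_n$ equal to $\lambda_{2n+1}$ and $\lambda_{2n}$ in the even case (and to $\lambda_{2n+2}$ and $\lambda_{2n+1}$ in the odd case), so that the limiting recurrence coefficients reproduce the $B_n$ and $\Lambda_n$ of the first step. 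This is the step I expect to be the main obstacle. Unlike Theorem~\ref{thm:limit1}, where the specialization $b=c=d=1/\alpha$ (with $a$ itself carrying a factor of $\alpha$) degenerates symmetrically, here $a=1$ is fixed, $b=q/\alpha^2$ collapses quadratically, and the Lommel parameter $c$ stays fixed and nonzero; so one must track carefully which numerator factors of $A_n$ and $C_n$ survive in the limit, with the correct power of $q$ and no stray factors of $\alpha$ or $c$, confirm that the power of $\alpha$ in the normalization is the one making both $\hat b_n(\alpha)$ and $\hat\lambda_n(\alpha)$ converge to finite nonzero limits, and verify that the factor arising from $b\to0$ does not spoil term-by-term convergence of the inner very-well-poised ${}_{10}W_9$ inside $\psi_n^{(\alpha,\epsilon)}(x;q)$.

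Finally, with the recurrence coefficients pinned down, I select the correct Ismail--Rahman branch by matching degree one: $\lim_{\alpha\to\infty}\hat p_1(x)$ must equal $x-B_0$ with the $B_0$ computed above, and comparing this against $\lim_{\alpha\to\infty}\psi_1^{(\alpha,\epsilon)}$ should force $\epsilon=2$ for the even family and $\epsilon=1$ for the odd family, exactly as in Theorem~\ref{thm:limit1} and Theorems~\ref{thm:explicit1} and \ref{thm:explicit2}. Substituting $x\mapsto x^2$ and restoring the extra factor of $x$ in the odd case then yields the two displayed limits. Since the classical $q$-Lommel polynomials already carry the single-sum representation quoted earlier in this section, I would also use that formula as an independent consistency check on the limit rather than needing to simplify the resulting double sum; no new explicit-formula statement is claimed here, so the argument is genuinely lighter than the proof of Theorems~\ref{thm:explicit1} and \ref{thm:explicit2}, the only real new bookkeeping being the asymmetric parameter specialization and the accompanying sign and power conventions in the normalization.
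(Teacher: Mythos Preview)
Your proposal is correct and follows essentially the same approach as the paper: the paper's own argument for Theorem~\ref{thm:limit2} consists precisely of the paragraph preceding it, which records the parameter choices $(a,b,c,d)$ and the rescalings $\hat p_n$ for the even and odd families and leaves the reader to rerun the proof of Theorems~\ref{thm:explicit1}--\ref{thm:explicit2} (equivalently Theorem~\ref{thm:limit1}) verbatim. Your outline---odd--even trick, specialization of $(a,b,c,d)$, rescaling, $\alpha\to\infty$, then matching the degree-one polynomial to select $\epsilon=2$ for the even part and $\epsilon=1$ for the odd part---is exactly that, and you correctly flag the asymmetric specialization (with $a=1$ fixed and the Lommel parameter $c$ retained) as the place where the bookkeeping differs from the even-odd case.
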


Theorem~\ref{thm:classexp} is \cite[Theorem~14.4.1]{Ismail}.

\begin{thm} 
\label{thm:classexp}
The classical $q$-Lommel polynomials are
$$
h_{n}(x;c,q)=
\sum_{k=0}^{n/2} \qbinom{n-k}{k}
\frac{(-c)^k q^{k^2-k}}
{(c;q)_k (cq^{n-1};q^{-1})_k}  x^{n-2k}.
$$
\end{thm}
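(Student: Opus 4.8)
The plan is to derive Theorem~\ref{thm:classexp} from the single-sum formula for $h_n(x;c,q)$ displayed earlier (the companion form of \cite[Theorem~14.4.1]{Ismail}) by a routine manipulation of $q$-shifted factorials, so that no new analytic input is required. Renaming the summation index there from $j$ to $k$, the coefficient of $x^{n-2k}$ in that formula is
\[
\frac{1}{(c;q)_n}\cdot\frac{(-1)^{k}(c;q)_{n-k}(q;q)_{n-k}}{(q;q)_{k}(c;q)_{k}(q;q)_{n-2k}}\,c^{k}q^{k(k-1)},
\]
and I would show this equals the coefficient of $x^{n-2k}$ asserted in Theorem~\ref{thm:classexp},
\[
\qbinom{n-k}{k}\frac{(-c)^{k}q^{k^{2}-k}}{(c;q)_{k}\,(cq^{n-1};q^{-1})_{k}}.
\]
The one nontrivial ingredient is the identity
\[
(cq^{n-1};q^{-1})_{k}=(1-cq^{n-1})(1-cq^{n-2})\cdots(1-cq^{n-k})=\frac{(c;q)_{n}}{(c;q)_{n-k}},
\]
rewriting the descending $q$-factorial in terms of ascending ones; combined with $\qbinom{n-k}{k}=(q;q)_{n-k}/\bigl((q;q)_{k}(q;q)_{n-2k}\bigr)$, $q^{k^{2}-k}=q^{k(k-1)}$ and $(-c)^{k}=(-1)^{k}c^{k}$, substituting into the second display reproduces the first display verbatim. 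Since both sides of Theorem~\ref{thm:classexp} are degree-$n$ polynomials in $x$ and all their coefficients agree, this proves the theorem.

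For a self-contained argument not relying on the earlier formula, one can instead induct on $n$ using the defining recurrence $h_{n+1}(x;c,q)=xh_{n}(x;c,q)-\lambda_{n}h_{n-1}(x;c,q)$ with $\lambda_{n}=cq^{n-1}/\bigl((1-cq^{n-1})(1-cq^{n})\bigr)$: check $n=0,1$ directly, then insert the claimed expressions for $h_{n}$ and $h_{n-1}$ and compare the coefficient of $x^{n+1-2k}$ on both sides, which after clearing denominators collapses to a single $\lambda_{n}$-weighted $q$-Pascal-type identity among $\qbinom{n-k}{k}$, $\qbinom{n-1-k}{k}$ and $\qbinom{n-k-1}{k-1}$ with their attached $(c;q)$-factors. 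A third route, mirroring the proofs of Theorems~\ref{thm:explicit1}--\ref{thm:explicit2}, would be to take $\alpha\to\infty$ in the Ismail--Rahman solutions $\psi_{n}^{(\alpha,\epsilon)}$ of Theorem~\ref{thm:IM} along the specialization of Theorem~\ref{thm:limit2}, where the inner ${}_{10}W_9$ degenerates because of the chosen values of $(a,b,c,d)$.

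The main technical obstacle -- and why I would present the first route -- is that the parameter $n$ sits inside the descending factorial $(cq^{n-1};q^{-1})_{k}$: in the induction (or along the ${}_{10}W_9$ limit) the shifts $n\mapsto n\pm1$ act on those factors in a way that is not a clean Pascal recursion, so one must first pass to a common denominator via $(cq^{n-1};q^{-1})_{k}=(c;q)_{n}/(c;q)_{n-k}$ before the desired relation becomes a genuine polynomial identity in $q^{k}$, $q^{n}$ and $c$ amenable to direct expansion. In the term-by-term comparison with the established single-sum formula this bookkeeping is immediate, which makes that argument the cleanest.
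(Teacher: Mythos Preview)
Your first route is correct: the identity $(cq^{n-1};q^{-1})_k=(c;q)_n/(c;q)_{n-k}$ together with the definition of the $q$-binomial coefficient does reduce Theorem~\ref{thm:classexp} term by term to the single-sum formula already displayed and cited as \cite[Theorem~14.4.1]{Ismail}, so no new input is needed.

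This is, however, a genuinely different route from the paper's. The paper does not invoke the earlier single-sum formula at all; instead it carries out exactly your ``third route'': it specializes the Ismail--Rahman double sum from Theorem~\ref{thm:IM} along the limit of Theorem~\ref{thm:limit2}, observes that the inner ${}_{10}W_9$ degenerates to an evaluable very-well-poised ${}_6W_5$, evaluates it in closed form, and then reads off the coefficient of $x^{2n-2k}$ (the odd case giving the same formula). Your approach is shorter and needs only elementary $q$-factorial bookkeeping, but it buys nothing beyond a restatement of a result already on the page. The paper's approach is heavier but serves a structural purpose: it demonstrates that the associated Askey--Wilson limit machinery, which is the real novelty for the even-odd $q$-Lommel polynomials in Theorems~\ref{thm:explicit1}--\ref{thm:explicit2}, also recovers the known classical formula, thereby validating the method on a case where the answer is already known.
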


\begin{proof} 
We consider the even case, the proof for the odd case is similar. 
The inner sum becomes an evaluable very well poised ${}_6W_5$
$$
{}_6W_5\left(cq^{k-1}; q^k, cq^{n+k}, q^{k-n}
\ \Bigr| \ q;q^{-2k}\right)
=\frac{(cq^{n-1};q^{-1})_{k}(q^{n+1};q)_{k}}{(c;q)_{k}(q^{k+1};q)_k} q^{-k(n-k)}.
$$ 
By considering the coefficient of $x^{2n-2k}$, we arrive at Theorem~\ref{thm:classexp}
with $n$ replaced by $2n$. The odd case actually gives the same result.
\end{proof}

For the type $R_I$ $q$-Lommel polynomials there is a simple generating function 
which gives an explicit expression.

\begin{prop} 
\label{prop:Igf}
The type $R_I$ $q$-Lommel polynomials have the generating function
$$
\sum_{n=0}^\infty (c^{-1};q^{-1})_n \ r_n(x;c,q) t^n=
\sum_{k=0}^\infty \frac{(-xt/c)^k q^{-\binom{k}{2}}}
{(t/c,tx;q^{-1})_{k+1}}.
$$
\end{prop}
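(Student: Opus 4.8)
The plan is to convert the defining three–term recurrence into a first–order $q$–difference equation for the generating function, and then to check that the right–hand side satisfies the very same equation. First I would normalize: put $f_n=(c^{-1};q^{-1})_n\,r_n(x;c,q)$ and $F(t)=\sum_{n\ge0}f_nt^n$, so the assertion is $F(t)=G(t)$, where $G(t)$ denotes the claimed double series. Since $(c^{-1};q^{-1})_n/(c^{-1};q^{-1})_{n-1}=1-c^{-1}q^{1-n}$ and $1-c^{-1}q^{-n}=-(1-cq^n)/(cq^n)$, Definition~\ref{defn:typeI} gives, after the two elementary simplifications
\[
(1-c^{-1}q^{-n})(x-b_n)=\frac1c-\frac{(1-cq^n)x}{cq^n},\qquad
(1-c^{-1}q^{-n})(1-c^{-1}q^{1-n})a_n=\frac1c,
\]
the clean recursion $f_{n+1}=\bigl(\tfrac1c-\tfrac{(1-cq^n)x}{cq^n}\bigr)f_n-\tfrac xc f_{n-1}$ for $n\ge0$, with $f_{-1}=0$ and $f_0=1$. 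Multiplying by $t^{n+1}$ and summing over $n\ge0$, the factor $q^{-n}$ hiding in $\tfrac{(1-cq^n)}{cq^n}=\tfrac{q^{-n}}{c}-1$ produces $F(t/q)$, and everything else is geometric; one lands on
\[
\Bigl(1-\frac tc\Bigr)(1-xt)\,F(t)=1-\frac{xt}{c}\,F(t/q).
\]
Comparing the coefficient of $t^n$ (where $f_n$ occurs linearly with coefficient $1$ on the left and is absent on the right) shows this equation together with $F(0)=1$ determines $F$ uniquely as a formal power series in $t$.

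Next I would verify that $G(t)$ obeys the same functional equation and $G(0)=1$; the initial value is just the $k=0$ term at $t=0$. The key move is to cancel the factors $1-t/c$ and $1-xt$ against the $j=0$ factors of the denominator Pochhammers $(t/c;q^{-1})_{k+1}$ and $(tx;q^{-1})_{k+1}$, which lowers each to $((t/c)q^{-1};q^{-1})_k$ and $((tx)q^{-1};q^{-1})_k$; thus $(1-t/c)(1-xt)G(t)$ becomes the same series with these reduced denominators. On the other side one uses $((t/q)/c;q^{-1})_{k+1}=((t/c)q^{-1};q^{-1})_{k+1}$ and likewise for $tx$, so $\tfrac{xt}{c}G(t/q)$ is a sum over $k$ whose summand, after the reindexing $m=k+1$ and the identity $(m-1)+\binom{m-1}{2}=\binom m2$ to consolidate the powers of $q$, equals $-(-xt/c)^m q^{-\binom m2}$ divided by $((t/c)q^{-1};q^{-1})_m\,((tx)q^{-1};q^{-1})_m$. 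Hence $1-\tfrac{xt}{c}G(t/q)$ is term for term the series just obtained for $(1-t/c)(1-xt)G(t)$, so $G$ satisfies the functional equation and $F=G$.

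I expect the only genuine bookkeeping to be in that second paragraph: making sure the powers of $q$ line up under the shift of summation index (this is exactly the identity $(m-1)+\binom{m-1}{2}=\binom m2$) and confirming that clearing $(1-t/c)(1-xt)$ really does just drop each Pochhammer index by one after reargumenting the base. Everything in the first paragraph — the two Pochhammer simplifications, the passage to the $q$–difference equation, and the uniqueness of its power-series solution — is routine once the substitution $f_n=(c^{-1};q^{-1})_n r_n$ is in place.
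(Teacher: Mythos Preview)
Your proof is correct and follows essentially the same route as the paper: both derive the functional equation $(1-t/c)(1-xt)F(t)=1-\frac{xt}{c}F(t/q)$ from the three-term recurrence, and then use it to identify the two sides. The only cosmetic difference is that the paper \emph{iterates} the equation (written as $F(t)=\frac{1}{(1-xt)(1-t/c)}-\frac{xt/c}{(1-xt)(1-t/c)}F(t/q)$) to generate the right-hand sum directly, whereas you instead verify that the right-hand sum satisfies the equation and invoke uniqueness; the underlying computation with the Pochhammer shifts and the identity $(m-1)+\binom{m-1}{2}=\binom{m}{2}$ is the same either way.
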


\begin{proof} 
If $G(x,t)$ is the generating function on the left side, then
Definition~\ref{defn:typeI} implies
\begin{align*}
G(x,t)-1&= (x+1/c)t G(x,t)-xt/c \ G(x,tq^{-1})-xt^2/c \ G(x,t),\\
G(x,t)&=\frac{1}{(1-xt)(1-t/c)}-\frac{xt/c}{(1-xt)(1-t/c)}G(x,tq^{-1})
\end{align*}
whose iterate is the result.
\end{proof}

\begin{thm}
\label{thm:RIexplicit}
The type $R_I$ $q$-Lommel polynomials have the explicit formula
$$
r_n(x;c,q)=\frac{1}{(c^{-1};q^{-1})_n}\sum_{k=0}^n \sum_{a=0}^{n-k}
(-x/c)^k q^{-\binom{k}{2}}\Qbinom{k+a}{a}{q^{-1}} c^{-a}
\Qbinom{n-a}{k}{q^{-1}} x^{n-k-a}.
$$
\end{thm}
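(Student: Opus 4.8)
The plan is to read off the explicit formula directly from the generating function in Proposition~\ref{prop:Igf} by extracting the coefficient of $t^n$ from each side. On the left-hand side the coefficient of $t^n$ is $(c^{-1};q^{-1})_n\, r_n(x;c,q)$, so it is enough to show that the coefficient of $t^n$ on the right equals the double sum appearing in the statement, after multiplying through by $(c^{-1};q^{-1})_n$.

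First I would expand the two factors in the denominator using the $q$-binomial theorem in the form $1/(z;p)_{m+1}=\sum_{j\ge0}\Qbinom{m+j}{j}{p}z^j$, applied with base $p=q^{-1}$. Taking $m=k$ and $z=t/c$, respectively $z=tx$, this gives
$$
\frac{(-xt/c)^k q^{-\binom{k}{2}}}{(t/c,tx;q^{-1})_{k+1}}
=(-x/c)^k q^{-\binom{k}{2}}\sum_{a\ge0}\sum_{b\ge0}\Qbinom{k+a}{a}{q^{-1}}\Qbinom{k+b}{b}{q^{-1}}\,c^{-a}x^{b}\,t^{k+a+b}.
$$
Summing over $k\ge0$ and extracting the coefficient of $t^n$ forces $b=n-k-a$, with the ranges $0\le k\le n$ and $0\le a\le n-k$; terms outside this range vanish automatically, since $\Qbinom{n-a}{k}{q^{-1}}=0$ for $k>n-a$.

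It then remains only to rewrite the second $q$-binomial coefficient: with $b=n-k-a$ one has $k+b=n-a$, so $\Qbinom{k+b}{b}{q^{-1}}=\Qbinom{n-a}{n-k-a}{q^{-1}}=\Qbinom{n-a}{k}{q^{-1}}$ by the symmetry $\Qbinom{m}{j}{p}=\Qbinom{m}{m-j}{p}$. Substituting this and dividing by $(c^{-1};q^{-1})_n$ produces exactly the formula of Theorem~\ref{thm:RIexplicit}. I do not anticipate a genuine obstacle: the argument is a routine coefficient extraction, and the only points that need care are keeping the base $q^{-1}$ straight when invoking the $q$-binomial theorem and carrying the prefactor $(-x/c)^k q^{-\binom{k}{2}}$ along untouched. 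One could alternatively verify that the proposed closed form satisfies the recurrence of Definition~\ref{defn:typeI} together with the stated initial conditions, but reusing Proposition~\ref{prop:Igf} is cleaner and shorter.
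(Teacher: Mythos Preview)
Your proposal is correct and is exactly the paper's approach: the paper's proof simply says ``Apply the $q^{-1}$-binomial theorem to Proposition~\ref{prop:Igf} to find the resulting coefficient of $t^n$,'' and you have filled in precisely those details. The expansion of $1/(z;q^{-1})_{k+1}$, the coefficient extraction forcing $b=n-k-a$, and the symmetry $\Qbinom{n-a}{n-k-a}{q^{-1}}=\Qbinom{n-a}{k}{q^{-1}}$ are all correct.
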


\begin{proof}
Apply the $q^{-1}$-binomial theorem to Proposition~\ref{prop:Igf} to find the 
resulting coefficient of $t^n$.
\end{proof}

\begin{prop} 
\label{prop:weirdconncoef}
We have the connection coefficient relation
$$
r_n(x^2;c,q)=\sum_{k=0}^n \Qbinom{n}{k}{q} \frac{c^k q^{n^2-(n-k)^2}}
{(cq^{n-1},cq^{2n-k};q^{-1})_k} p_{2n-2k}(x;c,q).
$$
\end{prop}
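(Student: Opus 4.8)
The plan is to prove the identity by induction on $n$, matching the type $R_I$ three-term recurrence of the left-hand side against the ordinary three-term recurrence of Proposition~\ref{prop:firstprop} satisfied by the even polynomials. Writing $y=x^2$, recall from Proposition~\ref{prop:firstprop} that $p_{2m}(x;c,q)=t_m(y)$, where $t_{m+1}(y)=(y-B_m)t_m(y)-\Lambda_m t_{m-1}(y)$ with $t_{-1}=0$, $t_0=1$, $B_0=\tfrac{1}{(1-c)(1-cq)}$, $B_m=\lambda_{2m}+\lambda_{2m+1}$ and $\Lambda_m=\lambda_{2m-1}\lambda_{2m}$ for $m\ge1$. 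Using $n^2-(n-k)^2=k(2n-k)$, set
\[
c_{n,k}=\qbinom{n}{k}\,\frac{c^{k}q^{k(2n-k)}}{(cq^{n-1};q^{-1})_k\,(cq^{2n-k};q^{-1})_k},
\qquad \widetilde r_n(y):=\sum_{k=0}^{n}c_{n,k}\,t_{n-k}(y),
\]
so that the claim is $r_n(y;c,q)=\widetilde r_n(y)$. Since $\{t_{n-k}(y)\}_{0\le k\le n}$ is a basis for the polynomials of degree at most $n$, and since the recurrence of Definition~\ref{defn:typeI} together with $r_{-1}=0$, $r_0=1$ determines $r_n$ uniquely, it suffices to verify that $\widetilde r_n$ satisfies $\widetilde r_{n+1}(y)=(y-b_n)\widetilde r_n(y)-y\,a_n\,\widetilde r_{n-1}(y)$ together with $\widetilde r_{-1}=0$ and $\widetilde r_0=1$; the last two hold because the $n=-1$ sum is empty and $c_{0,0}=1$.

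For the inductive step, expand $(y-b_n)\widetilde r_n(y)-y\,a_n\,\widetilde r_{n-1}(y)$ in the basis $\{t_j(y)\}$ by repeatedly applying $y\,t_m(y)=t_{m+1}(y)+B_m t_m(y)+\Lambda_m t_{m-1}(y)$, and compare the coefficient of $t_{n+1-j}(y)$ on the two sides of the desired recurrence. After collecting terms this reduces the proposition to the scalar identity
\[
c_{n+1,j}=c_{n,j}+(B_{n-j+1}-b_n)c_{n,j-1}+\Lambda_{n-j+2}c_{n,j-2}-a_n c_{n-1,j-1}-a_n B_{n-j+1}c_{n-1,j-2}-a_n\Lambda_{n-j+2}c_{n-1,j-3},
\]
to be checked for all $j$, with the conventions that $c_{m,k}=0$ when $k<0$ or $k>m$ and that $B_0=\tfrac{1}{(1-c)(1-cq)}$ (the index $n-j+1$ vanishes exactly when $j=n+1$). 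It is convenient to regroup this as
\[
c_{n+1,j}=c_{n,j}+B_{n-j+1}\bigl(c_{n,j-1}-a_n c_{n-1,j-2}\bigr)+\Lambda_{n-j+2}\bigl(c_{n,j-2}-a_n c_{n-1,j-3}\bigr)-b_n c_{n,j-1}-a_n c_{n-1,j-1},
\]
which isolates the places where the recurrence coefficients $B$ and $\Lambda$ enter.

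To finish one substitutes the explicit rational expressions for $B_m$, $\Lambda_m$, $b_n$, $a_n$ in terms of $\lambda_{2m},\lambda_{2m+1}$ from Definition~\ref{defn:evenodd}, factors out the common $q$-binomial $\qbinom{n+1}{j}$ using $q$-Pascal in both forms $\qbinom{n+1}{j}=\qbinom{n}{j}+q^{\,n+1-j}\qbinom{n}{j-1}=q^{\,j}\qbinom{n}{j}+\qbinom{n}{j-1}$, and simplifies the Pochhammer ratios $(cq^{n-1};q^{-1})_k(cq^{2n-k};q^{-1})_k$ using their evident factorizations as $n$ and $j$ shift; clearing denominators then leaves a polynomial identity in $c$ and $q$ that can be checked directly. \textbf{This reduction is the main obstacle.} The recursion has six terms, it mixes $q$-binomial coefficients with different parameters against two Pochhammer-type products whose index windows move non-uniformly under the shifts $n\to n+1$ and $j\to j,j-1,j-2,j-3$, and the case $j=n+1$ forces the separate treatment of $B_0$ (since $\lambda_0$ would be singular); arranging the computation so that the $q$-binomial part telescopes cleanly against the rational part is where essentially all the work lies. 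It is prudent to verify the recursion by hand for small $n$ before attempting the general simplification.

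If the six-term recursion proves unwieldy, there is a fallback using orthogonality. The sequence $\{t_m\}$ is an orthogonal polynomial sequence in $y=x^2$ with respect to a moment functional $\mathcal M$, so the connection coefficient is $c_{n,k}=\mathcal M[r_n(y)t_{n-k}(y)]\big/\mathcal M[t_{n-k}(y)^2]$ with $\mathcal M[t_m(y)^2]=\Lambda_1\cdots\Lambda_m$; by Theorem~\ref{thm:equalmom} the moments of $\mathcal M$ are those of the type $R_I$ family, so the numerator can be evaluated from the explicit formula for $r_n$ in Theorem~\ref{thm:RIexplicit} (or from the generating function in Proposition~\ref{prop:Igf}) together with the explicit formula for $p_{2n}$ in Theorem~\ref{thm:explicit1}, reducing the proposition to a single basic-hypergeometric summation.
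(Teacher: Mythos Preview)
Your main approach---induction on $n$ via the three-term recurrences for $r_n$ and for $t_m=p_{2m}$---is exactly the paper's proof; the paper records only the one-line justification ``Induction on $n$ using the three term relations,'' so your write-up already contains more detail than the original, and the six-term scalar recursion you display is the correct identity to verify.

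One genuine warning about the fallback: it is circular as stated. In the paper Theorem~\ref{thm:equalmom} is deduced \emph{from} Proposition~\ref{prop:weirdconncoef} (via Proposition~\ref{prop:momentsmatch}), so you cannot invoke the equality of moments $L_r(x^m)=L_p(x^{2m})$ to compute the connection coefficients here. If you want an orthogonality-based route you would have to evaluate $\mathcal M[r_n(y)t_{n-k}(y)]$ directly from explicit formulas without appealing to Theorem~\ref{thm:equalmom}, which is a separate (and not obviously easier) computation.
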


\begin{proof} Induction on $n$ using the three term relations.
\end{proof}

\begin{prop} 
\label{prop:momentsmatch}
If $L_p$ is the linear functional for the even-odd polynomials $p_n(x;c,q)$, then
$$
L_p(r_n(x^2;c,q))= \frac{c^nq^{n^2}}{(c,cq;q)_n}.
$$
\end{prop}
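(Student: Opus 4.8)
The plan is to apply the linear functional $L_p$ directly to the connection coefficient identity of Proposition~\ref{prop:weirdconncoef} and let orthogonality collapse the sum. Recall that $p_n(x;c,q)$ are the monic orthogonal polynomials for $L_p$, with the standard normalization $L_p(1)=1$; hence $L_p(p_m(x;c,q))=L_p\bigl(p_m(x;c,q)\,p_0(x;c,q)\bigr)=0$ for every $m\ge1$, while $L_p(p_0(x;c,q))=1$. Note that $L_p$ here is a purely algebraic linear functional on polynomials, so there is no convergence issue to worry about.

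Applying $L_p$ term by term to
$$
r_n(x^2;c,q)=\sum_{k=0}^n \qbinom{n}{k}\frac{c^k q^{n^2-(n-k)^2}}{(cq^{n-1},cq^{2n-k};q^{-1})_k}\,p_{2n-2k}(x;c,q),
$$
each summand with $k<n$ contributes a multiple of $L_p(p_{2n-2k}(x;c,q))$ with $2n-2k\ge2$, hence vanishes; only the $k=n$ term survives. Since $\qbinom{n}{n}=1$ and $L_p(p_0)=1$, this gives
$$
L_p\bigl(r_n(x^2;c,q)\bigr)=\frac{c^n q^{n^2}}{(cq^{n-1},cq^{n};q^{-1})_n}.
$$

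It remains only to simplify the denominator. Reversing the order of the factors, $(cq^{n-1};q^{-1})_n=\prod_{j=0}^{n-1}(1-cq^{n-1-j})=(c;q)_n$ and $(cq^{n};q^{-1})_n=\prod_{j=0}^{n-1}(1-cq^{n-j})=(cq;q)_n$, so $(cq^{n-1},cq^{n};q^{-1})_n=(c,cq;q)_n$ and the asserted formula follows. Given Proposition~\ref{prop:weirdconncoef}, there is essentially no obstacle here; the only points that warrant a moment's care are the normalization $L_p(1)=1$ of the orthogonality functional and the elementary passage from base $q^{-1}$ to base $q$ in the shifted factorials. The genuine content lies upstream in Proposition~\ref{prop:weirdconncoef} (proved by induction on the two three-term recurrences): without that connection coefficient identity one would instead need a direct formula for the even-odd moments $L_p(x^{2m})$, which is far less convenient, so the connection coefficient route is the efficient one.
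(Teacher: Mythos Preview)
Your proof is correct and follows exactly the same approach as the paper's own proof: apply $L_p$ to the connection coefficient identity of Proposition~\ref{prop:weirdconncoef} and use orthogonality so that only the $k=n$ term survives. You have simply written out the details (the normalization $L_p(1)=1$ and the conversion of the base-$q^{-1}$ shifted factorials to base $q$) that the paper leaves implicit.
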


\begin{proof} Apply $L_p$ to both sides of Proposition~\ref{prop:weirdconncoef}.
By orthogonality, $L_p(p_j(x))=0$ for $j>0,$ so only the $k=n$ term survives. 
\end{proof}

\begin{thm}
\label{thm:equalmom} 
The moments of the type $R_I$ $q$-Lommel polynomials are equal to the even moments of 
the even-odd $q$-Lommel polynomials,
$$
L_{r}(x^m)=L_p(x^{2m}), \quad m\ge 0.
$$ 
\end{thm}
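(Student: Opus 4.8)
The plan is to derive Theorem~\ref{thm:equalmom} from Propositions~\ref{prop:weirdconncoef} and~\ref{prop:momentsmatch} together with a single general property of $R_I$ moment functionals. First I would reformulate the statement. The numbers $L_p(x^{2m})$ are the moments of the linear functional $M$ on $\mathbb{C}[y]$ given by $M(P(y))=L_p(P(x^2))$; by the odd-even trick (Proposition~\ref{prop:firstprop}), $M$ is precisely the functional for which the monic polynomials $t_n$ with $p_{2n}(x;c,q)=t_n(x^2)$ form the orthogonal polynomial sequence, so $M(t_m t_n)=0$ for $m\ne n$. Thus Theorem~\ref{thm:equalmom} is exactly the assertion $L_r=M$ as functionals on $\mathbb{C}[y]$. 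Since each $r_n(y;c,q)$ is monic of degree $n$, the family $\{r_n(y;c,q)\}_{n\ge0}$ is a $\mathbb{C}$-basis of $\mathbb{C}[y]$, so it suffices to prove $L_r(r_n(y;c,q))=M(r_n(y;c,q))$ for all $n\ge0$.

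The right-hand side is handled by the propositions already in place: Proposition~\ref{prop:momentsmatch} gives
\[
  M(r_n(y;c,q))=L_p(r_n(x^2;c,q))=\frac{c^nq^{n^2}}{(c,cq;q)_n},
\]
equivalently, one reads Proposition~\ref{prop:weirdconncoef} as the expansion $r_n(y;c,q)=\sum_{k}c_{n,k}\,t_{n-k}(y)$ with $c_{n,0}=1$, applies $M$, and uses orthogonality of the $t_j$ to kill every term with $n-k\ge1$.

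For the left-hand side I would invoke the general fact that for any $R_I$ polynomial sequence with recurrence $r_{n+1}(y)=(y-b_n)r_n(y)-a_n\,y\,r_{n-1}(y)$ and normalized moment functional $L_r$, one has $L_r(r_n)=\prod_{k=1}^{n}a_k$. This is the $R_I$ analogue of the identity $L(p_n\,y^n)=\lambda_1\cdots\lambda_n$ for ordinary orthogonal polynomials, and it follows by comparing the formal Laurent series $r_n(y)\sum_{m\ge0}L_r(y^m)y^{-m-1}$ with the $n$-th convergent $P_n(y)/r_n(y)$ of the $T$-fraction $\sum_{m\ge0}L_r(y^m)y^{-m-1}=1/(y-b_0-a_1y/(y-b_1-\cdots))$ attached to $\{r_n\}$: the determinant identity $P_n\,r_{n-1}-P_{n-1}\,r_n=(a_1\cdots a_{n-1})y^{n-1}$ forces $r_n(y)\sum_{m}L_r(y^m)y^{-m-1}-P_n(y)=(a_1\cdots a_n)\,y^{-1}+O(y^{-2})$, and the coefficient of $y^{-1}$ on the left is $L_r(r_n)$. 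With $a_k=cq^{2k-1}/((1-cq^{k-1})(1-cq^{k}))$ this gives $L_r(r_n(y;c,q))=\prod_{k=1}^{n}a_k=c^nq^{n^2}/(c,cq;q)_n$, which is exactly $M(r_n)$. Hence $L_r=M$ on $\mathbb{C}[y]$, and evaluating at $y^m$ gives $L_r(x^m)=L_p(x^{2m})$.

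The reformulation and the basis argument are routine; the one point that needs care is the $R_I$ identity $L_r(r_n)=\prod_{k}a_k$, where one must be sure that $L_r$ carries the standard normalization $L_r(1)=1$ and that the $T$-fraction used is the one genuinely associated with the sequence $\{r_n\}$. All of the genuinely computational difficulty has already been absorbed into Proposition~\ref{prop:weirdconncoef} (proved by induction on $n$ via the three-term recurrences) and Proposition~\ref{prop:momentsmatch}.
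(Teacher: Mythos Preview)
Your proposal is correct and takes essentially the same approach as the paper. Both arguments combine Proposition~\ref{prop:momentsmatch} with the general $R_I$ identity $L_r(r_n)=a_1\cdots a_n$ (which the paper simply cites as \cite[Corollary~3.15]{kimstanton:R1}, whereas you sketch a $T$-fraction proof) to conclude that the two functionals agree on the basis $\{r_n\}$ and hence on all monomials.
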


\begin{proof} The type $R_I$ moments $L_{r}(x^m)$ are recursively determined by
  \cite[Corollary 3.15]{kimstanton:R1}
$$
L_r(r_n(x;c,q))=a_1a_2\cdots a_n= \frac{c^nq^{n^2}}{(c,cq;q)_n}, \quad n\ge 0.
$$
By Proposition~\ref{prop:momentsmatch} the moments $L_p(x^{2m})$ satisfy the same recurrence.
\end{proof}

For completeness, we give the inverse relation to Proposition~\ref{prop:weirdconncoef}.

\begin{prop} 
\label{prop:invweirdconncoef}
We have the connection coefficient relation
$$
p_{2n}(x;c,q)=\sum_{k=0}^n \Qbinom{n}{k}{q} \frac{(-c)^k q^{2nk-\binom{k+1}{2}}}
{(cq^{n-1},cq^{2n-1};q^{-1})_k} r_{n-k}(x^2;c,q).
$$
\end{prop}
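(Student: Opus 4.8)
The relation in Proposition~\ref{prop:weirdconncoef} expresses each $r_m(x^2;c,q)$ as a $\mathbb{Q}(c,q)$-linear combination of $p_0(x;c,q),p_2(x;c,q),\dots,p_{2m}(x;c,q)$, and the coefficient of the leading term $p_{2m}$ equals $1$ (the $k=0$ summand). Since $t_m(y)=p_{2m}(x;c,q)$ with $y=x^2$ is monic of degree $m$, as is $r_m(y;c,q)$, the transition matrix from $\{p_{2m}\}_{m\ge0}$ to $\{r_m(x^2)\}_{m\ge0}$ is lower unitriangular, hence has a unique inverse, again lower unitriangular. The plan is to check directly that the matrix prescribed in the statement is this inverse. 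Writing $r_m(x^2;c,q)=\sum_i M_{m,i}\,p_{2i}(x;c,q)$ and $p_{2n}(x;c,q)=\sum_j N_{n,j}\,r_j(x^2;c,q)$, where $M_{m,i}$ and $N_{n,j}$ come from Proposition~\ref{prop:weirdconncoef} and from the claimed formula via the substitutions $i=m-k$ and $j=n-k$, it suffices to prove the scalar identity $\sum_{j=i}^n N_{n,j}M_{j,i}=\delta_{n,i}$ for all $0\le i\le n$.

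For $i=n$ both factors reduce to the diagonal entry $1$ and there is nothing to prove. For $i<n$, I would substitute the explicit expressions and pull out of the sum over $j$ everything depending only on $n$ and $i$ --- the power of $c$, the $q$-powers from $q^{2n(n-j)-\binom{n-j+1}{2}}$ and $q^{j^2-i^2}$, and the $q$-binomials after applying $\qbinom{n}{j}\qbinom{j}{i}=\qbinom{n}{i}\qbinom{n-i}{j-i}$ --- leaving a single terminating series in the variable $j-i$. Converting the $q^{-1}$-shifted factorials $(cq^{j-1};q^{-1})_{j-i}$, $(cq^{j+i};q^{-1})_{j-i}$, $(cq^{n-1};q^{-1})_{n-j}$, $(cq^{2n-1};q^{-1})_{n-j}$ to base $q$ via $(a;q^{-1})_m=(-a)^m q^{-\binom m2}(a^{-1};q)_m$ together with the usual rules for extracting $(a;q)_{n-j}$ from $(a;q)_{n-i}$, the series should take the form of a terminating, balanced ${}_3\phi_2$ (quite possibly collapsing to a ${}_2\phi_1$), summable in closed form by the $q$-Pfaff--Saalsch\"utz identity (respectively the $q$-Chu--Vandermonde sum); its value vanishes precisely because $i<n$, which is exactly $\sum_j N_{n,j}M_{j,i}=0$.

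The main obstacle is entirely bookkeeping: tracking signs and the numerous powers of $q$ while passing between the base-$q$ and base-$q^{-1}$ Pochhammer conventions, and then matching the parameters of the resulting series to the hypotheses of the summation theorem so that its argument and the balancing condition come out exactly right. An alternative that sidesteps the explicit summation is to imitate the induction of Proposition~\ref{prop:weirdconncoef}: expand $p_{2n+2}(x;c,q)=t_{n+1}(x^2)$ via the recurrence of Proposition~\ref{prop:firstprop}, insert the inductive expansions of $t_n$ and $t_{n-1}$ in terms of the $r_j(x^2)$, and re-expand each $x^2\,r_j(x^2)$ using the type $R_I$ recurrence of Definition~\ref{defn:typeI} (which, unlike the orthogonal case, telescopes down through all lower $r_l$); collecting the coefficient of $r_i(x^2)$ then reduces the claim to a recurrence among the $N_{n,j}$, verifiable by the same $q$-shifted-factorial manipulations. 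I expect the matrix-inversion route to be shorter.
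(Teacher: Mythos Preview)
The paper actually gives no proof of this proposition at all; it is simply stated ``for completeness'' immediately after Proposition~\ref{prop:weirdconncoef}, whose proof is by induction on the three-term relations.  So there is nothing to compare against, and the relevant question is whether your plan stands on its own.

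It does.  Your matrix-inversion route is correct and in fact cleaner than you anticipate.  With $M_{j,i}$ read off from Proposition~\ref{prop:weirdconncoef} (with $n\mapsto j$, $k=j-i$) and $N_{n,j}$ from the claimed formula ($k=n-j$), the two ``first'' Pochhammers concatenate:
\[
(cq^{n-1};q^{-1})_{n-j}\,(cq^{j-1};q^{-1})_{j-i}=(cq^{n-1};q^{-1})_{n-i},
\]
so they drop out of the $j$-sum entirely.  For the remaining pair, set $d=cq^{2i}$, $N=n-i$, $m=j-i$ and rewrite in base~$q$:
\[
(cq^{2n-1};q^{-1})_{n-j}=(dq^{N+m};q)_{N-m}=\frac{(dq^N;q)_N}{(dq^N;q)_m},
\qquad
(cq^{j+i};q^{-1})_{j-i}=(dq;q)_m.
\]
Combining these with $\qbinom{n}{j}\qbinom{j}{i}=\qbinom{n}{i}\qbinom{N}{m}$ and tracking the $q$-powers (the $m$-dependent exponent collapses to $m$ after writing $\qbinom{N}{m}$ in terms of $(q^{-N};q)_m$), the inner sum is precisely
\[
\sum_{m=0}^{N}\frac{(q^{-N};q)_m\,(dq^N;q)_m}{(q;q)_m\,(dq;q)_m}\,q^{m}
={}_2\phi_1\!\left(q^{-N},dq^N;\,dq;\,q,q\right)
=\frac{(q^{1-N};q)_N}{(dq;q)_N}\,(dq^N)^N
\]
by $q$-Chu--Vandermonde, which vanishes for $N\ge1$ because $(q^{1-N};q)_N$ contains the factor $1-q^0$.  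So the series is a ${}_2\phi_1$, not a ${}_3\phi_2$, and no balancing check or Saalsch\"utz is needed.  Your induction alternative would also work but is, as you suspect, longer: the type $R_I$ recurrence forces you to iterate $x\,r_j(x)$ all the way down to $r_0$ before the expansion closes.
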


\begin{prop} The even-odd $q$-Lommel polynomials have the explicit expressions
\begin{align*}
p_{2n}(x;c,q)&=\frac{1}{(c;q)_{2n}}\sum_{k=0}^n 
(-1)^k x^{2n-2k}(cq^k;q)_{2n-2k}\sum_{j=0}^k \Qbinom{n-j}{k-j}{q}
\Qbinom{n-k+j-1}{j}{q} c^j 
q^{jn+\binom{k}{2}},\\
p_{2n+1}(x;c,q)&=\frac{1}{(c;q)_{2n+1}}\sum_{k=0}^n 
(-1)^k x^{2n-2k+1}(cq^k;q)_{2n-2k+1}\sum_{j=0}^k \Qbinom{n-j}{k-j}{q}
\Qbinom{n-k+j}{j}{q}  c^j 
q^{jn+\binom{k}{2}}.
\end{align*}

\end{prop}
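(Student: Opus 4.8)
The plan is to prove both identities by a single induction on the degree using the three-term recurrence of Definition~\ref{defn:evenodd}, treating the even and odd cases simultaneously. First I would observe that the two displayed identities are the specializations $m=2n$ and $m=2n+1$ of the single formula
\[
p_m(x;c,q)=\frac{1}{(c;q)_m}\sum_{2k\le m}(-1)^k x^{m-2k}(cq^k;q)_{m-2k}\sum_{j=0}^k\Qbinom{\lfloor m/2\rfloor-j}{k-j}{q}\Qbinom{\lceil(m-2k)/2\rceil+j-1}{j}{q}c^j q^{j\lfloor m/2\rfloor+\binom{k}{2}},
\]
so that it suffices to prove this for all $m\ge0$; the cases $m=0,1$ are immediate, reading $\Qbinom{-1}{0}{q}=1$.

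For the inductive step, set $P_m(x):=(c;q)_m\,p_m(x;c,q)$. Because the denominator of $\lambda_m$ in Definition~\ref{defn:evenodd} equals $(1-cq^{m-1})(1-cq^m)$ for both parities, and $(c;q)_{m+1}=(c;q)_{m-1}(1-cq^{m-1})(1-cq^m)$, multiplying $p_{m+1}=xp_m-\lambda_m p_{m-1}$ by $(c;q)_{m+1}$ clears all denominators and yields the clean recurrence $P_{m+1}(x)=(1-cq^m)\,x\,P_m(x)-\mu_m\,P_{m-1}(x)$ with $\mu_{2n}=cq^{3n-1}$ and $\mu_{2n+1}=q^{n}$. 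I would then compare the coefficient of $x^{m+1-2K}$ on both sides. Writing $(cq^K;q)_{m+1-2K}=(1-cq^{m-K})(cq^K;q)_{m-2K}$ and $(cq^{K-1};q)_{m-2K+1}=(1-cq^{K-1})(cq^K;q)_{m-2K}$ and cancelling the common factor $(-1)^K(cq^K;q)_{m-2K}$, the assertion reduces to the finite identity
\[
(1-cq^{m-K})\,S(m+1,K)=(1-cq^m)\,S(m,K)+\mu_m(1-cq^{K-1})\,S(m-1,K-1),
\]
where $S(m,K):=\sum_{j}\Qbinom{\lfloor m/2\rfloor-j}{K-j}{q}\Qbinom{\lceil(m-2K)/2\rceil+j-1}{j}{q}c^j q^{j\lfloor m/2\rfloor+\binom{K}{2}}$. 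Splitting on the parity of $m$ (which fixes the floors, ceilings and the value of $\mu_m$), one proves this last identity by applying the $q$-Pascal relations $\Qbinom{a}{b}{q}=\Qbinom{a-1}{b-1}{q}+q^b\Qbinom{a-1}{b}{q}=q^{a-b}\Qbinom{a-1}{b-1}{q}+\Qbinom{a-1}{b}{q}$ to the two $q$-binomials in $S(m+1,K)$ and shifting the summation index $j$ to align the three sums.

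The main obstacle is exactly this last bookkeeping: carrying the several $q$-power exponents (in particular the $\binom{k}{2}$ versus $\binom{K-1}{2}$ contributions) and the floor/ceiling values correctly through the two parity cases, and checking that after the $q$-Pascal expansions the three $j$-sums match term by term rather than only after a further regrouping. Two alternatives exist, each trading this for a different computation. One may instead start from Theorems~\ref{thm:explicit1} and \ref{thm:explicit2}, whose inner sums are (limits of) very-well-poised basic hypergeometric series, and apply a suitable transformation from \cite{GR} so that, after re-expansion, the compact double sums above emerge. Or one may combine the connection-coefficient relation of Proposition~\ref{prop:invweirdconncoef} with the explicit expansion of $r_n$ from Theorem~\ref{thm:RIexplicit} and carry out the resulting internal summation; this is tempting because those results are stated just above, but the internal sum one meets is a generic terminating ${}_2\phi_1$ that has no closed-form evaluation on its own, so I would not pursue it first.
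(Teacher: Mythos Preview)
Your approach is essentially the same as the paper's: the paper's proof consists of the single sentence ``This may be verified from Definition~\ref{defn:evenodd}, by considering the coefficients of $x^{2n-2k-1}$,'' which is exactly your plan of checking the three-term recurrence coefficient by coefficient after clearing denominators. Your write-up simply supplies the details (the unified formula in $m$, the clean integral recurrence $P_{m+1}=(1-cq^m)xP_m-\mu_mP_{m-1}$, and the reduction to a finite $q$-binomial identity) that the paper omits.
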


\begin{proof} This may be verified from Definition~\ref{defn:evenodd}, by considering the 
coefficients of $x^{2n-2k-1}.$
\end{proof}

\section{Moments and Continued fractions}
\label{sec:moments-cont-fract}

In this section we review the known facts which connect
continued fractions to moment generating functions. We independently prove 
the continued fractions for the moments of each of the three $q$-Lommel polynomials.

\begin{defn}
  Take a sequence of orthogonal polynomials $p_n(x)$ which satisfy 
  \( p_{-1}(x)=0 \), \( p_0(x)=1 \), and
  $$
  p_{n+1}(x)=(x-b_n)p_n(x)-\lambda_n p_{n-1}(x), \quad n\ge 0,
  $$
  and whose linear functional for orthogonality is $L_p.$
  Define
  \[
    \mu_n(\{b_k\}_{k\ge0},\{\lambda_k\}_{k\ge0}) = L_p(x^n).
    \]
  The moment generating function for $L_p$ is
  $$
  \sum_{n=0}^\infty L_p(x^n) t^n= \sum_{n=0}^\infty \mu_n(\{b_k\}_{k\ge0},\{\lambda_k\}_{k\ge0}) t^n.
  $$
\end{defn}

A Jacobi continued fraction also exists for $M_p(t),$ converging as 
formal power series in $t$,

\begin{equation}
\label{eq:Jaccontfrac}
\sum_{n=0}^\infty L_p(x^n) t^n= 
\cfrac{1}{1-b_0t-\cfrac{\lambda_1t^2}{1-b_1t-\cfrac{\lambda_2t^2}{1-\cdots}}}.
\end{equation}

\begin{defn} \cite{kimstanton:R1}
  For general type $R_I$ orthogonal polynomials
  $$
  r_{n+1}(x)=(x-b_n)r_n(x)-(a_n x+\lambda_n) r_{n-1}(x),  \quad n\ge 0,
  $$
  with linear functional $L_r$, define
  \[
    \mu_n(\{b_k\}_{k\ge0},\{a_k\}_{k\ge0},\{\lambda_k\}_{k\ge0}) = L_r(x^n).
  \]
\end{defn}

The corresponding continued fraction for the type \( R_I \) moment generating
function is \cite[Corollary 3.7]{kimstanton:R1}
\begin{equation}
\label{eq:typeIcontfrac}
\sum_{n=0}^\infty L_r(x^n) t^n= 
\cfrac{1}{1-b_0t-\cfrac{a_1t+\lambda_1t^2}{1-b_1t-\cfrac{a_2t+\lambda_2t^2}{1-\cdots}}}.
\end{equation}

Note that both continued fractions in \eqref{eq:Jaccontfrac} and \eqref{eq:typeIcontfrac}
are explicitly given in terms of the three term recurrence coefficients. We shall evaluate
the continued fractions as quotients of basic hypergeometric series, namely $q$-Bessel functions, 
using contiguous relations.

For the Lommel polynomials $h_n(x;c)$, it is known that the moment generating function is a 
quotient of Bessel functions, with $\lambda_n=1/(c+n-1)(c+n),$
\[
  \sum_{n=0}^\infty L_h(x^n) t^n=\frac{{}_0F_1(c+1;-t^2)}{{}_0F_1(c;-t^2)} =
  \cfrac{1}{1-\cfrac{\lambda_1t^2}{1-\cfrac{\lambda_2t^2}{1-\cdots}}}.
\]

The moment generating function for the classical $q$-Lommel polynomials is a quotient of 
$q$-Bessel functions. In this section we shall see that a corresponding result holds for
our other two $q$-Lommel polynomials, and in fact they are equal.

\begin{thm}
\label{thm:firstmom}
\cite[Theorem~14.4.3]{Ismail}
The moment generating function for the classical $q$-Lommel polynomials $h_n(x;c,q)$ is 
a quotient of Jackson's first $q$-Bessel functions
\[
  \sum_{n=0}^\infty L_h(x^n) t^n=\frac{ \qhyper21{0,0}{cq}{q,-t^2}}{ \qhyper21{0,0}{c}{q;-t^2}}
  = \cfrac{1}{1-\cfrac{\lambda_1t^2}{1-\cfrac{\lambda_2t^2}{1-\cdots}}},
\]
with $\lambda_n=cq^{n-1}/(1-cq^{n-1})(1-cq^n)$.
\end{thm}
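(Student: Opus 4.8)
\emph{Proof proposal.} The first equality in the claim—that $\sum_{n\ge0}L_h(x^n)t^n$ equals the displayed $J$-fraction—is immediate from \eqref{eq:Jaccontfrac}: the classical $q$-Lommel polynomials have $b_n=0$ and $\lambda_n=cq^{n-1}/((1-cq^{n-1})(1-cq^n))$, so the general theory recorded in \eqref{eq:Jaccontfrac} applies verbatim. The substantive point is the identification of this continued fraction with the quotient of basic hypergeometric series. The plan is to set $F(a):={}_2\phi_1(0,0;a;q,-t^2)=\sum_{k\ge0}(-t^2)^k/((q;q)_k(a;q)_k)$, a formal power series in $t$ with constant term $1$, and to prove the contiguous relation
$$F(a)=F(aq)-\frac{at^2}{(1-a)(1-aq)}F(aq^2).$$

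To establish this relation I would subtract the two series term by term and telescope. Using $(a;q)_k=(1-a)(aq;q)_{k-1}$ and $(1-aq^k)(aq;q)_{k-1}=(aq;q)_k$ one gets $1/(a;q)_k-1/(aq;q)_k=a(1-q^k)/((1-a)(aq;q)_k)$; the factor $1-q^k$ collapses $(q;q)_k$ to $(q;q)_{k-1}$, and after re-indexing $k\mapsto k+1$ and peeling off $(1-aq)$ from $(aq;q)_{k+1}$ the difference becomes $-at^2/((1-a)(1-aq))$ times $\sum_{k\ge0}(-t^2)^k/((q;q)_k(aq^2;q)_k)=F(aq^2)$. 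This is the only computation of substance, and it is short.

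Next I would specialize $a=cq^n$ for $n\ge0$. Since $a/((1-a)(1-aq))=cq^n/((1-cq^n)(1-cq^{n+1}))=\lambda_{n+1}$, the contiguous relation reads $F(cq^n)=F(cq^{n+1})-\lambda_{n+1}t^2F(cq^{n+2})$. Dividing through by $F(cq^{n+1})$ and setting $H_n:=F(cq^{n+1})/F(cq^n)$—a legitimate element of the ring of formal power series in $t$, since every $F(cq^m)$ has constant term $1$ and is therefore invertible—gives $H_n=1/(1-\lambda_{n+1}t^2H_{n+1})$, precisely the recursion obeyed by the successive tails of the $J$-fraction. Because each level of the recursion introduces a factor $t^2$, an easy induction shows that $H_0$ agrees modulo $t^{2N}$ with the $N$-th convergent of the continued fraction in the statement; hence that continued fraction converges $t$-adically to $H_0=F(cq)/F(c)$. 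Combining this with \eqref{eq:Jaccontfrac} yields $\sum_{n\ge0}L_h(x^n)t^n=H_0=F(cq)/F(c)$, which is the asserted quotient of Jackson's first $q$-Bessel functions once the ${}_2\phi_1$ notation is restored.

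I do not anticipate a serious obstacle. The two points needing care are getting the signs and index shifts in the contiguous relation exactly right, and justifying the formal ($t$-adic) convergence of the continued fraction to $H_0$. A less self-contained alternative would be to deduce the identity from the $q$-Bessel recurrence \eqref{eq:qBesLom1} of Proposition~\ref{prop:qBesLom1} together with the defining series \eqref{eq:J1}, but that route needs a separate limiting argument for the ratio $R^{(1)}_{n-1,\nu+1}/R^{(1)}_{n,\nu}$ as $n\to\infty$, whereas the contiguous-relation argument above avoids it entirely.
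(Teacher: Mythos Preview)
Your proposal is correct and follows essentially the same route as the paper: both identify the continued fraction by iterating a contiguous relation for ${}_2\phi_1$ and matching with \eqref{eq:Jaccontfrac}. The only cosmetic difference is that the paper first records the general Heine continued fraction (Lemma~\ref{lem:Heine}) via Heine's contiguous relation with free parameters $a,b$ and then specializes to $a=b=0$, $z=-t^2$, whereas you specialize first and verify the resulting relation $F(c)=F(cq)-\frac{ct^2}{(1-c)(1-cq)}F(cq^2)$ by a direct series manipulation.
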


\begin{thm}
\label{thm:secondmom}
The moment generating function for the even-odd $q$-Lommel polynomials $p_n(x;c,q)$ is 
a quotient of Jackson's third $q$-Bessel functions
\[
  \sum_{n=0}^\infty L_p(x^n) t^n=\frac{ \qhyper11{0}{cq}{q;qt^2}}{ \qhyper11{0}{c}{q;t^2}}
  = \cfrac{1}{1-\cfrac{\lambda_1t^2}{1-\cfrac{\lambda_2t^2}{1-\cdots}}},
\]
with 
$$
  \lambda_{2n} = \frac{cq^{3n-1}}{(1-cq^{2n-1})(1-cq^{2n})},
  \qquad
  \lambda_{2n+1} = \frac{q^{n}}{(1-cq^{2n})(1-cq^{2n+1})}.
$$
\end{thm}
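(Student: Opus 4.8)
The plan is to reduce the statement to evaluating a continued fraction and then to evaluate that continued fraction by iterating a pair of contiguous relations for ${}_1\phi_1$, in close parallel with (but a bit more involved than) the proof of Theorem~\ref{thm:firstmom}. Since the three-term recurrence of Definition~\ref{defn:evenodd} has $b_n=0$ for all $n$, the Jacobi continued fraction \eqref{eq:Jaccontfrac} for the moment generating function collapses to the even J-fraction
\[
\sum_{n\ge0}L_p(x^n)t^n=\cfrac{1}{1-\cfrac{\lambda_1t^2}{1-\cfrac{\lambda_2t^2}{1-\cdots}}},
\]
which is already the second equality in the theorem; so the real content is to show that this continued fraction equals ${}_1\phi_1(0;cq;q,qt^2)/{}_1\phi_1(0;c;q,t^2)$.

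Writing $\phi(a,z):={}_1\phi_1(0;a;q,z)=\sum_{k\ge0}\frac{(-1)^kq^{\binom k2}}{(q;q)_k(a;q)_k}z^k$, I would introduce the ladder of formal power series
\begin{gather*}
W_{2n}:={}_1\phi_1(0;cq^{2n};q,q^nt^2),\\
W_{2n+1}:={}_1\phi_1(0;cq^{2n+1};q,q^{n+1}t^2)\qquad(n\ge0),
\end{gather*}
so that $W_0$ and $W_1$ are exactly the denominator and numerator above and each $W_m$ has constant term $1$. The crux is then the chain of identities
\[
W_m=W_{m+1}-\lambda_{m+1}t^2\,W_{m+2},\qquad m\ge0,
\]
which I expect to get from the two contiguous relations
\begin{gather*}
\phi(a,z)=\phi(aq,qz)-\frac{z}{(1-a)(1-aq)}\,\phi(aq^2,qz),\\
\phi(a,z)=\phi(aq,z)-\frac{az}{(1-a)(1-aq)}\,\phi(aq^2,qz).
\end{gather*}
Each of these should follow from a short computation: subtract the first two series, apply $\frac{1}{(a;q)_k}-\frac{q^k}{(aq;q)_k}=\frac{1-q^k}{(1-a)(aq;q)_k}$ (respectively $\frac{1}{(a;q)_k}-\frac{1}{(aq;q)_k}=\frac{a(1-q^k)}{(1-a)(aq;q)_k}$), drop the vanishing $k=0$ term, and reindex $k\mapsto k+1$ to recognize the right-hand side. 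Specializing the first relation to $(a,z)=(cq^{2n},q^nt^2)$ gives the even cases of the chain, the coefficient being $\frac{q^nt^2}{(1-cq^{2n})(1-cq^{2n+1})}=\lambda_{2n+1}t^2$; specializing the second to $(a,z)=(cq^{2n+1},q^{n+1}t^2)$ gives the odd cases, with coefficient $\frac{cq^{3n+2}t^2}{(1-cq^{2n+1})(1-cq^{2n+2})}=\lambda_{2n+2}t^2$; one checks in each case that the shifted parameters and arguments land exactly on $W_{m+1}$ and $W_{m+2}$.

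Finally, dividing the chain identity by $W_{m+1}$ shows that $R_m:=W_{m+1}/W_m$ satisfies $R_m=(1-\lambda_{m+1}t^2R_{m+1})^{-1}$ with $R_m(0)=1$; since the factor $t^2$ makes the dependence finite in each degree, this recursion together with the initial conditions determines every coefficient of $R_0$, so $R_0$ equals the continued fraction displayed above, and with the first paragraph this proves the theorem. The one genuinely nonroutine step is finding the ladder $(W_m)$: unlike the classical $q$-Lommel case, where a single contiguous relation for a ${}_2\phi_1$ suffices, here the $\lambda_n$ have a $2$-periodic shape, so the ladder must advance the base parameter by $q$ at every step while multiplying the argument by $q$ only at every second step, which is what forces the use of two parallel but distinct contiguous relations.
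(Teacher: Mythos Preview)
Your proof is correct. The two ${}_1\phi_1$ contiguous relations check out exactly as you describe, the ladder $W_m$ lands on the right parameters, and the coefficients match $\lambda_{m+1}$ on the nose; the passage from the chain $W_m=W_{m+1}-\lambda_{m+1}t^2W_{m+2}$ to the continued fraction is the standard formal-power-series argument.

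The paper takes a slightly different route: rather than deriving contiguous relations for ${}_1\phi_1$ directly, it first records Heine's continued fraction for a ratio of ${}_2\phi_1$'s (Lemma~\ref{lem:Heine}), obtained by iterating the single contiguous relation \eqref{eq:Heine_contigous2} which shifts $a$ and then swaps the roles of $a$ and $b$, and then specializes $b=0$, $z=-t^2/a$, $a\to\infty$ to collapse the ${}_2\phi_1$ to a ${}_1\phi_1$. Your two relations are precisely the $a\to\infty$ limits of the two alternating instances of Heine's relation, so you are in effect commuting the limit with the iteration: you take the limit at the level of the contiguous relation and then iterate, whereas the paper iterates first and takes the limit at the level of the finished continued fraction. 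The paper's route is shorter once Heine's fraction is on the table and makes the parallel with Theorem~\ref{thm:firstmom} transparent (both are specializations of the same lemma); your route is more self-contained, avoids the limiting argument, and makes explicit why the $2$-periodic shape of $\lambda_n$ forces the argument $z$ to step by $q$ only every second time.
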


\begin{thm}
\label{thm:thirdmom}
The moment generating function for the type $R_I$ $q$-Lommel polynomials $r_n(x;c,q)$ is 
a quotient of Jackson's third $q$-Bessel functions
\[
  \sum_{n=0}^\infty L_r(x^n) z^n=\frac{ \qhyper11{0}{cq}{q;qz}}{ \qhyper11{0}{c}{q;z}}
  = \cfrac{1}{1-b_0z - \cfrac{a_1z}{1-b_1z-\cfrac{a_2z}{1-b_2z-\cdots}}},
\]
with 
$$
  a_{n} = \frac{cq^{2n-1}}{(1-cq^{n-1})(1-cq^{n})},
  \qquad
 b_{n} = \frac{q^{n}}{1-cq^{n}}.
$$
\end{thm}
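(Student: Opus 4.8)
The plan is to prove the two equalities in the statement separately. The outer equality --- that the continued fraction equals $\sum_{n\ge0}L_r(x^n)z^n$ --- is immediate from the general $R_I$ theory: the recurrence in Definition~\ref{defn:typeI} has the form $r_{n+1}(x)=(x-b_n)r_n(x)-(a_nx+\lambda_n)r_{n-1}(x)$ with $\lambda_n=0$, so \eqref{eq:typeIcontfrac} produces precisely the displayed $J$-fraction with the stated $a_n$ and $b_n$. Thus the real work is to evaluate this continued fraction as the ratio of ${}_1\phi_1$'s.

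To this end I set $\phi_c(z):=\qhyper11{0}{c}{q;z}=\sum_{n\ge0}\frac{(-1)^nq^{\binom n2}}{(q;q)_n(c;q)_n}z^n$, so that the claimed value of the continued fraction is $G_c(z):=\phi_{cq}(qz)/\phi_c(z)$, a formal power series with $G_c(0)=1$. First I would record the self-similarity of the recurrence coefficients: writing $b_n(c)$ and $a_n(c)$ to exhibit the dependence on $c$, one checks directly from $b_n(c)=q^n/(1-cq^n)$ and $a_n(c)=cq^{2n-1}/((1-cq^{n-1})(1-cq^n))$ that $b_{n+1}(c)=q\,b_n(cq)$ and $a_{n+1}(c)=q\,a_n(cq)$. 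Consequently the continued fraction $H_c(z)$ satisfies the functional equation
\[
H_c(z)=\cfrac{1}{1-b_0z-a_1z\,H_{cq}(qz)},\qquad b_0=\frac1{1-c},\quad a_1=\frac{cq}{(1-c)(1-cq)},
\]
since the tail $\cfrac{1}{1-b_1(c)z-\cfrac{a_2(c)z}{1-b_2(c)z-\cdots}}$ equals $H_{cq}(qz)$ under the substitutions $b_{n+1}(c)z=b_n(cq)(qz)$, $a_{n+1}(c)z=a_n(cq)(qz)$. Together with $H_c(0)=1$, this functional equation determines $H_c$ uniquely as a formal power series in $z$ (expand coefficient by coefficient, or iterate the relation back into the continued fraction), so it suffices to verify that $G_c$ obeys the same functional equation.

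Clearing denominators, that is equivalent to the contiguous relation
\[
\phi_c(z)=(1-b_0z)\,\phi_{cq}(qz)-a_1z\,\phi_{cq^2}(q^2z),
\]
after which dividing by $\phi_c(z)$ and using $G_{cq}(qz)=\phi_{cq^2}(q^2z)/\phi_{cq}(qz)$ yields $1=(1-b_0z)G_c(z)-a_1z\,G_c(z)G_{cq}(qz)$, i.e. the functional equation above, and then uniqueness gives $G_c=H_c$. I would prove the contiguous relation by comparing coefficients of $z^n$ on both sides: using $\binom n2=\binom{n-1}2+(n-1)$, $(c;q)_n=(1-c)(cq;q)_{n-1}$, $(q;q)_n=(1-q^n)(q;q)_{n-1}$, and $(cq;q)_{n-1}/(cq^2;q)_{n-1}=(1-cq)/(1-cq^n)$, the coefficient identity collapses to the elementary $1-cq^n=q^n(1-c)+(1-q^n)$, with the case $n=0$ checked by hand. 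The only genuine computation in the argument is this last reduction; the conceptual content is the scaling $b_{n+1}(c)=q\,b_n(cq)$, $a_{n+1}(c)=q\,a_n(cq)$, which makes the continued fraction reproduce itself under $(c,z)\mapsto(cq,qz)$ and lets a single three-term contiguous relation close the induction. (Alternatively one could route the identification through \eqref{eq:r=J}, but the direct contiguous-relation argument is cleaner and self-contained.)
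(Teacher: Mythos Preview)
Your argument is correct. The outer equality is indeed immediate from \eqref{eq:typeIcontfrac}, and your verification of the contiguous relation
\[
\phi_c(z)=(1-b_0z)\,\phi_{cq}(qz)-a_1z\,\phi_{cq^2}(q^2z)
\]
reduces, after the manipulations you describe, to $1-cq^n=q^n(1-c)+(1-q^n)(1-cq^n)+cq^n(1-q^n)$ upon clearing $(1-c)(1-cq^n)$, which is an identity. The self-similarity $b_{n+1}(c)=qb_n(cq)$, $a_{n+1}(c)=qa_n(cq)$ is correct and does give the functional equation $H_c(z)=\bigl(1-b_0z-a_1zH_{cq}(qz)\bigr)^{-1}$; uniqueness of the formal power-series solution follows because the $z^n$-coefficient of $H_c$ is determined by lower-order coefficients of $H_{c'}$ for various $c'$.

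The paper takes a different route: it quotes the $q$-N\"orlund continued fraction for $\qhyper21{aq,bq}{cq}{q,z}\big/\qhyper21{a,b}{c}{q,z}$ (Proposition~\ref{prop:q-Norlund}), then specializes via $z\mapsto z/b$, $a=0$, $b\to\infty$, under which $\qhyper21{0,b}{c}{q,z/b}\to\qhyper11{0}{c}{q,z}$ and the recurrence coefficients limit to the stated $a_n,b_n$. Your approach is more self-contained and elementary: it needs no external continued-fraction identity and no limiting procedure, only a single three-term relation for ${}_1\phi_1$ verified coefficientwise. The paper's approach, on the other hand, places the result inside a one-parameter family (it simultaneously yields Theorem~\ref{thm:genequalmom} by keeping $a\ne0$), which your direct argument does not immediately provide.
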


Theorem~\ref{thm:equalmom} implies that the two continued fractions in
Theorems~\ref{thm:secondmom} and \ref{thm:thirdmom} with \( z=t^2 \) are equal.

\begin{cor} 
\label{cor:bigcor}
We have the equality of continued fractions
\[
 \cfrac{1}{1-b_0z - \cfrac{a_1z}{1-b_1z-\cfrac{a_2z}{1-b_2z-\cdots}}}
=  \cfrac{1}{1-\cfrac{\lambda_1z}{1-\cfrac{\lambda_2z}{1-\cdots}}},
\]
where
$$
  a_{n} = \frac{cq^{2n-1}}{(1-cq^{n-1})(1-cq^{n})},
  \qquad
 b_{n} = \frac{q^{n}}{1-cq^{n}},
$$
$$
  \lambda_{2n} = \frac{cq^{3n-1}}{(1-cq^{2n-1})(1-cq^{2n})},
  \qquad
  \lambda_{2n+1} = \frac{q^{n}}{(1-cq^{2n})(1-cq^{2n+1})}.
$$
\end{cor}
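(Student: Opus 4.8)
The plan is to deduce the corollary directly from the three moment evaluations already in hand, namely Theorems~\ref{thm:secondmom}, \ref{thm:thirdmom}, and~\ref{thm:equalmom}, without manipulating the continued fractions themselves. By Theorem~\ref{thm:thirdmom}, the continued fraction on the left of the asserted identity equals the type $R_I$ moment generating function $\sum_{n\ge0} L_r(x^n)z^n$. By Theorem~\ref{thm:secondmom}, the Jacobi continued fraction on the right, read with $t^2$ in place of $z$, equals the even-odd moment generating function $\sum_{m\ge0} L_p(x^m)t^m$. So it suffices to show that, as formal power series, $\sum_{n\ge0} L_r(x^n)z^n = \sum_{n\ge0} L_p(x^{2n})z^n$, together with the observation that passing from $t$ to $z=t^2$ is a legitimate substitution on the even-odd side.

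First I would note that the even-odd $q$-Lommel polynomials form a \emph{symmetric} orthogonal polynomial sequence: the recurrence in Definition~\ref{defn:evenodd} has vanishing diagonal coefficients, so $L_p$ is a symmetric functional and $L_p(x^{2k+1})=0$ for all $k\ge0$. Hence $\sum_{m\ge0} L_p(x^m)t^m = \sum_{k\ge0} L_p(x^{2k})t^{2k}$ is a power series in $t^2$, and replacing $t^2$ by an independent variable $z$ yields the well-defined series $\sum_{k\ge0} L_p(x^{2k})z^k$. Next, Theorem~\ref{thm:equalmom} gives $L_r(x^n)=L_p(x^{2n})$ for every $n\ge0$, so $\sum_{n\ge0} L_r(x^n)z^n = \sum_{n\ge0} L_p(x^{2n})z^n$. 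Chaining this with Theorems~\ref{thm:thirdmom} and~\ref{thm:secondmom} then identifies the two continued fractions, which is the claim.

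There is no genuinely hard step; the content has been front-loaded into Theorem~\ref{thm:equalmom}, whose proof rests on the connection-coefficient identity of Proposition~\ref{prop:weirdconncoef} and the $R_I$ moment recursion of \cite{kimstanton:R1}. The only points demanding a word of care are the bookkeeping of the substitution $z=t^2$ and the symmetry remark that annihilates the odd moments, both of which are routine.

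A self-contained alternative would be to prove the identity purely at the level of continued fractions, via an equivalence transformation together with an even-part contraction that recasts the type $R_I$ fraction in Jacobi form; one then matches the resulting numerators and denominators against the prescribed $\lambda_n$. This reduces to verifying explicit algebraic identities among $a_n$, $b_n$, and $\lambda_n$ and is considerably more computational — in effect re-proving Theorem~\ref{thm:equalmom} — so I would only sketch it in a remark.
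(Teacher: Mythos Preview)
Your proposal is correct and matches the paper's own argument essentially line for line: the paper states just before the corollary that ``Theorem~\ref{thm:equalmom} implies that the two continued fractions in Theorems~\ref{thm:secondmom} and \ref{thm:thirdmom} with \( z=t^2 \) are equal,'' which is exactly the chain you spell out. Your added remark on the vanishing of the odd moments (since the even-odd recurrence has $b_n=0$) makes explicit the one detail the paper leaves implicit in the substitution $z=t^2$.
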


Theorems~\ref{thm:firstmom}, \ref{thm:secondmom}, and \ref{thm:thirdmom}  may 
all be proven using contiguous relations for hypergeometric and basic hypergeometric series.

To prove Theorems~\ref{thm:firstmom} and \ref{thm:secondmom} we use Heine's
contiguous relation \cite[17.6.19]{NIST:DLMF} which is
\[
  \qhyper21{aq,b}{cq}{q,z} - \qhyper21{a,b}{c}{q,z}
  = \frac{(1-b)(a-c)z}{(1-c)(1-cq)} \qhyper21{aq,bq}{cq^2}{q,z}.
\]
Equivalently, 
\begin{equation}
  \label{eq:Heine_contigous2}
  \frac{\qhyper21{aq,b}{cq}{q,z}}{\qhyper21{a,b}{c}{q,z}}
  =\cfrac{1}{1- \cfrac{(1-b)(a-c)z}{(1-c)(1-cq)}
  \cdot \cfrac{\qhyper21{bq,aq}{cq^2}{q,z}}{\qhyper21{b,aq}{cq}{q,z}}}.
\end{equation}
Applying \eqref{eq:Heine_contigous2} iteratively, we obtain Heine's continued
fraction, which is a $q$-analogue of Gauss's continued fraction.

\begin{lem}[Heine's fraction]
  \label{lem:Heine}
We have  
\[
  \frac{\qhyper21{aq,b}{cq}{q,z}}{\qhyper21{a,b}{c}{q,z}} =
  \cfrac{1}{1-\cfrac{\beta_1z}{1-\cfrac{\beta_2z}{1-\cdots}}},
\]
where
\[
  \beta_{2n+1} = \frac{(1-bq^n)(a-cq^n)q^{n}}{(1-cq^{2n})(1-cq^{2n+1})},
  \qquad
  \beta_{2n} = \frac{(1-aq^n)(b-cq^n)q^{n-1}}{(1-cq^{2n-1})(1-cq^{2n})}.
\]
\end{lem}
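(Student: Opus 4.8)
The plan is to derive Lemma~\ref{lem:Heine} by iterating the contiguous relation \eqref{eq:Heine_contigous2}, which is the ``equivalently'' form of Heine's relation already stated just above. The key observation is that the quotient appearing in the denominator of \eqref{eq:Heine_contigous2}, namely $\qhyper21{bq,aq}{cq^2}{q,z}\big/\qhyper21{b,aq}{cq}{q,z}$, has exactly the same shape as the left-hand side $\qhyper21{aq,b}{cq}{q,z}\big/\qhyper21{a,b}{c}{q,z}$ after the parameter substitution $(a,b,c)\mapsto(b,aq,cq)$. So a single application of \eqref{eq:Heine_contigous2} peels off one level of the continued fraction and leaves a quotient of the same type with shifted parameters, and we can recurse.

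Concretely, I would set up the recursion as follows. Write $F(a,b,c)=\qhyper21{aq,b}{cq}{q,z}\big/\qhyper21{a,b}{c}{q,z}$. Then \eqref{eq:Heine_contigous2} says
\[
F(a,b,c)=\cfrac{1}{1-\dfrac{(1-b)(a-c)z}{(1-c)(1-cq)}\,F(b,aq,cq)}.
\]
Applying the same identity to $F(b,aq,cq)$ gives
\[
F(b,aq,cq)=\cfrac{1}{1-\dfrac{(1-aq)(b-cq)z}{(1-cq)(1-cq^2)}\,F(aq,bq,cq^2)},
\]
and now $F(aq,bq,cq^2)$ is $F(a,b,c)$ with $(a,b,c)\mapsto(aq,bq,cq^2)$, i.e. we have returned to the original form with all parameters advanced by one $q$-power on $a$ and $b$ and two on $c$. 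Iterating these two steps alternately produces the infinite J-fraction, and one reads off
\[
\beta_1=\frac{(1-b)(a-c)z^0\cdot z}{(1-c)(1-cq)},\qquad
\beta_2=\frac{(1-aq)(b-cq)}{(1-cq)(1-cq^2)},
\]
and in general, after $n$ full cycles the parameters have become $(aq^n,bq^n,cq^{2n})$, so the next two partial numerators are obtained by substituting $a\to aq^n$, $b\to bq^n$, $c\to cq^{2n}$ into the $\beta_1,\beta_2$ formulas; this yields precisely
\[
\beta_{2n+1}=\frac{(1-bq^n)(aq^n-cq^{2n})q^{?}}{(1-cq^{2n})(1-cq^{2n+1})}
=\frac{(1-bq^n)(a-cq^n)q^{n}}{(1-cq^{2n})(1-cq^{2n+1})},
\]
\[
\beta_{2n}=\frac{(1-aq^n)(bq^{n-1}-cq^{2n-1})}{(1-cq^{2n-1})(1-cq^{2n})}
=\frac{(1-aq^n)(b-cq^n)q^{n-1}}{(1-cq^{2n-1})(1-cq^{2n})},
\]
matching the stated $\beta_{2n+1}$ and $\beta_{2n}$ after pulling the common powers of $q$ out of the factors $aq^n-cq^{2n}$ and $bq^{n-1}-cq^{2n-1}$.

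The only genuine subtlety, and the step I would treat with the most care, is the convergence/well-definedness of the infinite continued fraction as a formal power series in $z$. This is where the ${}_2\phi_1$'s being formal power series with constant term $1$ matters: each $F(aq^n,bq^n,cq^{2n})$ is a formal power series in $z$ with constant term $1$, and the $n$-th partial numerator $\beta_n z$ has valuation $\ge 1$ in $z$ (indeed valuation exactly $1$ since the coefficient of $z$ is a nonzero rational function of $a,b,c,q$), so the truncation at level $N$ agrees with the full continued fraction modulo $z^{N+1}$. Hence the formal limit exists and equals $F(a,b,c)$. I would state this as a short lemma-internal remark rather than belabor it. Everything else is the routine bookkeeping of tracking the parameter shifts through the alternating two-step recursion and simplifying the resulting products, which I would present compactly by exhibiting one cycle explicitly and then invoking induction on $n$.
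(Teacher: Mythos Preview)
Your proposal is correct and follows exactly the paper's approach: the paper proves the lemma by saying ``Applying \eqref{eq:Heine_contigous2} iteratively, we obtain Heine's continued fraction,'' and your write-up simply spells out this iteration in detail, tracking the alternating parameter shift $(a,b,c)\mapsto(b,aq,cq)$ and verifying the resulting $\beta_n$'s. Your added remark on formal convergence is a reasonable piece of rigor that the paper leaves implicit.
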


Theorem~\ref{thm:firstmom} is the special case \( a=b=0 \) and \( z=-t^2 \) of
Lemma~\ref{lem:Heine}. Theorem~\ref{thm:secondmom} is also the limiting case
$z=-t^2/a,$ $b=0,$ $a\to\infty$ of Lemma~\ref{lem:Heine}.

For Theorem~\ref{thm:thirdmom} we need the $q$-N\"orlund fraction \cite[(19.2.7)]{Cuyt}.
However, to simplify the expressions we need some notation for continued fractions.

\begin{defn}
For sequences $a_i$ and $b_i$, let
\[
 \KK_{i=0}^m \left( \frac{a_i}{b_i} \right) =
  \cfrac{a_0}{
  b_0 + \cfrac{a_1}{
  b_1 + \genfrac{}{}{0pt}{0}{}{\displaystyle\ddots + \cfrac{a_m}{b_m}}}}, \qquad
 \KK_{i=0}^\infty \left( \frac{a_i}{b_i} \right) =
  \cfrac{a_0}{
  b_0 + \cfrac{a_1}{
  b_1 + \genfrac{}{}{0pt}{0}{}{\ddots }}}.
\]
\end{defn}

The following lemma will be used later. 

\begin{lem}
  \label{lem:K=K}
For any sequences $\{a_i:0\le i\le m\}$, $\{b_i:0\le i\le m\}$, and $\{c_i:-1\le i\le m\}$, we have
\[
 \KK_{i=0}^m \left( \frac{a_i}{b_i} \right) =
\frac{1}{c_{-1}} \KK_{i=0}^m \left( \frac{a_ic_{i-1}c_i}{b_ic_i} \right).
\]
\end{lem}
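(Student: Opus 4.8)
The plan is to prove Lemma~\ref{lem:K=K} by induction on $m$, exploiting the recursive structure of the continued fraction operator $\KK$. The key algebraic fact is the single-step rescaling identity: for any constants $\gamma, \delta$ and any continued fraction tail $F$,
\[
\cfrac{\gamma\delta a}{\delta b + \delta F} = \frac{1}{\gamma}\cdot\cfrac{\gamma^2\delta a}{\gamma\delta b + \gamma\delta F},
\]
which is just clearing a common factor. More precisely, what I want is the observation that multiplying the numerator of a level-$i$ term by $c_{i-1}c_i$ and its ``$b$''-part by $c_i$, while simultaneously dividing the whole tail from level $i$ onward by $c_i$, is a consistent transformation that telescopes.

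First I would set up the notation carefully: write $F_j = \KK_{i=j}^m(a_i/b_i)$ for the tail starting at index $j$, so that $F_j = a_j/(b_j + F_{j+1})$ for $j < m$ and $F_m = a_m/b_m$. Likewise write $G_j = \KK_{i=j}^m(a_i c_{i-1}c_i/(b_i c_i))$ for the transformed tail. The claim to prove by downward induction on $j$ (from $j=m$ down to $j=0$) is
\[
G_j = c_{j-1}\, F_j.
\]
The base case $j=m$ reads $G_m = a_m c_{m-1}c_m/(b_m c_m) = c_{m-1}\,a_m/b_m = c_{m-1}F_m$, which holds after cancelling $c_m$. For the inductive step, assuming $G_{j+1} = c_j F_{j+1}$, I compute
\[
G_j = \cfrac{a_j c_{j-1}c_j}{b_j c_j + G_{j+1}} = \cfrac{a_j c_{j-1}c_j}{b_j c_j + c_j F_{j+1}} = \cfrac{a_j c_{j-1}c_j}{c_j(b_j + F_{j+1})} = c_{j-1}\cdot\cfrac{a_j}{b_j + F_{j+1}} = c_{j-1}F_j,
\]
which completes the induction. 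Taking $j=0$ gives $G_0 = c_{-1}F_0$, i.e. $\KK_{i=0}^m(a_i c_{i-1}c_i/(b_i c_i)) = c_{-1}\KK_{i=0}^m(a_i/b_i)$, which rearranges to the stated identity after dividing by $c_{-1}$.

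I do not anticipate a genuine obstacle here; the lemma is a formal rescaling identity and the induction is clean. The one point requiring a little care is bookkeeping with the index shifts — making sure the ``$c_{i-1}c_i$'' pattern lines up so that each $c_j$ introduced in the numerator at level $j$ is exactly cancelled by the $c_j$ that appears when the transformed tail $G_{j+1}$ is pulled out as $c_j F_{j+1}$. It is also worth remarking that no convergence issue arises since everything is a finite continued fraction (the lemma is stated for $0\le i\le m$), so all manipulations are purely algebraic identities of rational functions in the $a_i$, $b_i$, $c_i$; if one later wants the $m\to\infty$ version one passes to the limit termwise, but that is not needed for the statement as given.
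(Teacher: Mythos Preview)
Your proof is correct and takes essentially the same approach as the paper: the paper's argument is to multiply numerator and denominator of the $i^{\mathrm{th}}$ fraction by $c_i$ and observe the telescoping, which is exactly what your downward induction on the tails $F_j$, $G_j$ formalizes. Your version is a bit more explicit about the bookkeeping, but the underlying idea is identical.
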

\begin{proof}
By multiplying $c_i$ to by the numerator and denominator of the $i^{\mathrm{th}}$ fraction,
we obtain
\[
  \cfrac{a_0}{
  b_0 + \cfrac{a_1}{
  b_1 \displaystyle+ \genfrac{}{}{0pt}{0}{}{\ddots + \cfrac{a_m}{b_m}}}}
=  \cfrac{a_0c_0}{
  b_0c_0 + \cfrac{a_1c_0c_1}{
  b_1c_1 + \genfrac{}{}{0pt}{0}{}{\ddots + \cfrac{a_mc_{m-1}c_m}{b_mc_m}}}},
\]
which is equivalent to the equation in the lemma.
\end{proof}

\begin{lem}[$q$-N\"orlund fraction]
\label{lem:norlund}
  We have
\[
\frac{\qhyper21{a,b}c{q,z}}{\qhyper21{aq,bq}{cq}{q,z}}= 
\frac{1-c-(a+b-ab-abq)z}{1-c}+\frac{1}{1-c} \KK_{m=1}^\infty
\left( \frac{c_m(z)}{e_m+d_mz} \right),
\]
where
\begin{align*}
  c_m(z)&=(1-aq^m)(1-bq^m)(cz-abq^mz^2)q^{m-1},\\
  e_m&=1-cq^m,\\
  d_m&=-(a+b-abq^m-abq^{m+1})q^m.
\end{align*}
\end{lem}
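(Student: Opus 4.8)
The plan is to derive the stated continued fraction from a single three-term contiguous relation among the ${}_2\phi_1$ series, by iterating it. First I would establish the contiguous relation
\[
\qhyper21{a,b}{c}{q,z} = \bigl(1-c-(a+b-ab-abq)z\bigr)\,\qhyper21{aq,bq}{cq}{q,z}
  + c_1(z)\,\qhyper21{aq^2,bq^2}{cq^2}{q,z},
\]
(divided through by $1-c$) where $c_1(z)=(1-aq)(1-bq)(cz-abqz^2)$, directly by comparing coefficients of $z^k$ on both sides. This is the standard Nörlund-type three-term relation; the $q$-version appears in \cite[(19.2.7)]{Cuyt}, but it is a routine verification using the defining series
$\qhyper21{a,b}{c}{q,z}=\sum_k \frac{(a;q)_k(b;q)_k}{(q;q)_k(c;q)_k}z^k$ and the Pascal-type identities $(aq;q)_k = \frac{1-aq^k}{1-a}(a;q)_k$ and $(a;q)_{k+1}=(1-a)(aq;q)_k$. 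The pattern persists under the shift $(a,b,c)\mapsto(aq,bq,cq)$, $z$ fixed: the general contiguous relation reads
\[
\qhyper21{aq^{m-1},bq^{m-1}}{cq^{m-1}}{q,z} = (e_m + d_m z)\,\qhyper21{aq^m,bq^m}{cq^m}{q,z}
  + c_m(z)\,\qhyper21{aq^{m+1},bq^{m+1}}{cq^{m+1}}{q,z},
\]
with $e_m, d_m, c_m(z)$ exactly as in the statement; again this is checked coefficientwise, now with parameters $aq^{m-1}$ etc.

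Next I would rewrite each such relation as a one-step continued-fraction recursion. Setting
\[
F_m = \frac{\qhyper21{aq^{m-1},bq^{m-1}}{cq^{m-1}}{q,z}}{\qhyper21{aq^m,bq^m}{cq^m}{q,z}},
\]
the $m$-th contiguous relation divided by $\qhyper21{aq^m,bq^m}{cq^m}{q,z}$ gives
\[
F_m = (e_m + d_m z) + \cfrac{c_m(z)}{F_{m+1}}, \qquad m\ge 1,
\]
and for $m=0$ the first relation gives, after dividing by $(1-c)\,\qhyper21{aq,bq}{cq}{q,z}$,
\[
\frac{\qhyper21{a,b}{c}{q,z}}{\qhyper21{aq,bq}{cq}{q,z}}
 = \frac{1-c-(a+b-ab-abq)z}{1-c} + \frac{1}{1-c}\cdot\frac{c_1(z)}{F_1}.
\]
Iterating $F_m = (e_m+d_m z) + c_m(z)/F_{m+1}$ unfolds $1/F_1$ into exactly the continued fraction $\KK_{m=1}^\infty\bigl(c_m(z)/(e_m+d_m z)\bigr)$, which is the claimed identity. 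Convergence as a formal power series in $z$ is automatic here: $c_m(z)$ has no constant term (it is divisible by $z$), so each additional level of the continued fraction only affects coefficients of $z^k$ for larger $k$, and the truncations stabilize.

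The main obstacle is the first step — pinning down the three-term contiguous relation with precisely the coefficients $c_m(z), e_m, d_m$ stated. One has to be careful that the quadratic-in-$z$ term $-abq^m z^2$ inside $c_m(z)$ comes out correctly; this is where the two Pascal identities interact and where sign/shift bookkeeping is easiest to get wrong. Once that relation is verified for general $m$ (most cleanly by proving the $m=1$ case and then observing it is stable under $(a,b,c)\mapsto(aq,bq,cq)$), the rest is the formal unfolding of a continued fraction from a linear three-term recursion, which is purely mechanical. I would present the coefficient comparison for $m=1$ in full and remark that the general $m$ follows by the parameter shift, then state the unfolding.
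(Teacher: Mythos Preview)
The paper does not prove this lemma; it is quoted from \cite[(19.2.7)]{Cuyt}. Your strategy---establish a three-term contiguous relation among the $\phi_m:={}_2\phi_1(aq^m,bq^m;cq^m;q,z)$ and iterate---is the standard route and is how such results are obtained in that reference, so there is nothing in the paper to compare your approach to.

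That said, the two contiguous relations you display are not correct as written, and they are mutually inconsistent. Your first relation, read at the constant term in $z$, asserts $1=1-c$; your ``general $m$'' relation, specialised to $m=1$, has middle coefficient $e_1+d_1z=(1-cq)+d_1z$, whereas the first has $(1-c)+d_0z$. The correct relation carries an extra factor of $(1-cq^m)$ on the left and $1/(1-cq^{m+1})$ on the last term:
\[
(1-cq^m)\,\phi_m \;=\; (e_m+d_mz)\,\phi_{m+1} \;+\; \frac{c_{m+1}(z)}{1-cq^{m+1}}\,\phi_{m+2},
\]
which one checks coefficientwise exactly as you propose, and which is visibly stable under the shift $(a,b,c)\mapsto(aq,bq,cq)$. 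With this in hand, set $G_m=(1-cq^m)\phi_m/\phi_{m+1}$; then $G_m=(e_m+d_mz)+c_{m+1}(z)/G_{m+1}$, and unfolding from $G_0=(1-c)\phi_0/\phi_1$ gives precisely the continued fraction of the lemma. By contrast, your $F_m=\phi_{m-1}/\phi_m$ with $F_m=(e_m+d_mz)+c_m(z)/F_{m+1}$ does not assemble into $\KK_{m\ge1}\bigl(c_m(z)/(e_m+d_mz)\bigr)$: both the index on $c_m$ and the missing $(1-cq^m)$ normalisation are off. This is exactly the bookkeeping hazard you flag; fixing it as above makes the argument go through.
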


The $q$-N\"orlund fraction can be restated in the form of a continued fraction
for type $R_I$ orthogonal polynomials.

\begin{prop} [$q$-N\"orlund fraction restated]
  \label{prop:q-Norlund}
  We have
  \[
\frac{\qhyper21{aq,bq}{cq}{q,z}}{\qhyper21{a,b}c{q,z}}
  = \cfrac{1}{
    1-b_0z -\cfrac{a_1z+\lambda_1 z^2}{
      1-b_1z-\cfrac{a_2z+\lambda_2 z^2}{
        1-b_2z- \genfrac{}{}{0pt}{1}{}{\ddots}}}},
  \]
  where
  \begin{align*}
b_m & = \frac{(a+b-abq^m-abq^{m+1})q^m}{1-cq^m},\\
a_m & = -\frac{(1-aq^m)(1-bq^m)cq^{m-1}}{(1-cq^{m-1})(1-cq^m)},\\
\lambda_m & = \frac{(1-aq^m)(1-bq^m)abq^{2m-1}}{(1-cq^{m-1})(1-cq^m)}.
  \end{align*}
\end{prop}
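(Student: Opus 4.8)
The plan is to derive Proposition~\ref{prop:q-Norlund} directly from Lemma~\ref{lem:norlund} by two purely formal manipulations of continued fractions: first rewrite the quotient so that the target ratio $\qhyper21{aq,bq}{cq}{q,z}/\qhyper21{a,b}{c}{q,z}$ appears, then convert the resulting continued fraction into the standard type $R_I$ Jacobi-type form using the rescaling Lemma~\ref{lem:K=K}.

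First I would take the reciprocal of the identity in Lemma~\ref{lem:norlund}. Writing $N(z) = 1-c-(a+b-ab-abq)z$ and $F(z) = \KK_{m=1}^\infty\left(\frac{c_m(z)}{e_m+d_mz}\right)$, Lemma~\ref{lem:norlund} says
\[
\frac{\qhyper21{a,b}{c}{q,z}}{\qhyper21{aq,bq}{cq}{q,z}} = \frac{N(z)+F(z)}{1-c},
\]
so the desired quotient is $\dfrac{1-c}{N(z)+F(z)}$. This already has the shape $\cfrac{1}{1-b_0z-(\cdots)}$ once we extract the factor $1-c$ into the continued fraction: dividing numerator and denominator by $1-c$ gives denominator $\frac{N(z)}{1-c} + \frac{F(z)}{1-c}$, and $\frac{N(z)}{1-c} = 1 - \frac{(a+b-ab-abq)z}{1-c} = 1 - b_0 z$ with the claimed $b_0 = \frac{(a+b-abq^0-abq^{1})q^0}{1-cq^0}$. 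So the first step is essentially bookkeeping: identify $b_0$ and reduce to showing $\frac{1}{1-c}F(z)$ equals the tail continued fraction $\cfrac{a_1z+\lambda_1z^2}{1-b_1z-\cfrac{a_2z+\lambda_2z^2}{1-b_2z-\cdots}}$.

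The substantive step is massaging $\frac{1}{1-c}\KK_{m=1}^\infty\left(\frac{c_m(z)}{e_m+d_mz}\right)$ into that canonical form. Here $c_m(z) = (1-aq^m)(1-bq^m)(cz - abq^m z^2)q^{m-1} = (1-aq^m)(1-bq^m)q^{m-1}\bigl(cz - abq^m z^2\bigr)$, which already factors as $z$ times a linear-in-$z$ polynomial, exactly matching the numerator shape $a_mz + \lambda_m z^2$ up to an overall constant; and $e_m + d_m z = (1-cq^m) - (a+b-abq^m-abq^{m+1})q^m z = (1-cq^m)(1 - b_m z)$ with $b_m$ as claimed. So I would factor $(1-cq^m)$ out of each partial denominator and absorb these factors plus the $\frac{1}{1-c}$ prefactor using Lemma~\ref{lem:K=K} with a telescoping choice of $c_i$: take $c_{-1} = \frac{1}{1-c}$... actually more carefully, apply Lemma~\ref{lem:K=K} with $c_{i}$ chosen so that $b_i c_i = 1 - b_m z$ after the rescaling, i.e. $c_i = 1/(1-cq^{i+1})$-type factors that clear the leading coefficients of the denominators; the numerators then pick up $c_{i-1}c_i$, and a short induction (or direct telescoping) shows the accumulated constants produce exactly the $-cq^{m-1}/((1-cq^{m-1})(1-cq^m))$ in $a_m$ and $abq^{2m-1}/((1-cq^{m-1})(1-cq^m))$ in $\lambda_m$, together with the overall sign that turns $\KK$ into the $\cfrac{1}{1 - b_0 z - \cfrac{a_1 z + \lambda_1 z^2}{\cdots}}$ presentation. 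One must also reconcile the sign conventions: the $\KK$ notation has $b_i + \frac{a_{i+1}}{\cdots}$ with a plus, whereas the type $R_I$ fraction has $1 - b_i z - \frac{\cdots}{\cdots}$ with minuses, so an alternating sign $c_i = (-1)^i(\cdots)$ is what actually does the job, and checking that the signs land correctly on $a_m$ (negative) and $\lambda_m$ (positive) is the detail most likely to trip one up.

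The main obstacle is therefore not conceptual but the careful tracking of constants and signs through Lemma~\ref{lem:K=K}: choosing the telescoping sequence $c_i$ correctly and verifying that the product of successive rescaling factors collapses to precisely the stated $a_m$ and $\lambda_m$. A clean way to organize this is to first prove the two-parameter identity $\KK_{i=0}^\infty\left(\frac{\alpha_i}{\beta_i}\right) = \cfrac{1}{1 - \cfrac{\gamma_1}{1 - \cfrac{\gamma_2}{\cdots}}}$-type normalization lemma in the form we need (essentially a restatement of Lemma~\ref{lem:K=K} with $c_i = \prod(\text{something})$), then apply it once. Alternatively, one could bypass continued-fraction gymnastics entirely and instead verify Proposition~\ref{prop:q-Norlund} by checking that the type $R_I$ three-term recurrence with the stated $b_m, a_m, \lambda_m$ is consistent with the contiguous relation
\[
\qhyper21{aq,bq}{cq}{q,z} \ \text{vs.} \ \qhyper21{aq^2,bq^2}{cq^2}{q,z},
\]
i.e. show the quotient $Q_m(z) = \qhyper21{aq^{m+1},bq^{m+1}}{cq^{m+1}}{q,z}/\qhyper21{aq^m,bq^m}{cq^m}{q,z}$ satisfies $Q_m = \frac{1}{1 - b_m z - (a_{m+1}z+\lambda_{m+1}z^2)Q_{m+1}}$ for all $m \ge 0$ via the Heine-type contiguous relation applied with parameters $aq^m, bq^m, cq^m$; then iterate. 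I would present the first (continued-fraction) approach as the main proof since Lemma~\ref{lem:norlund} is already invoked, relegating the consistency check to a remark.
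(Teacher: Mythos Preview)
Your approach is essentially the paper's: take the reciprocal of Lemma~\ref{lem:norlund} and apply Lemma~\ref{lem:K=K} to normalize each partial denominator to $1-b_m z$. The paper streamlines the first step by absorbing the outside term $e_0+d_0z$ back into the continued fraction as the $m=0$ level, writing the reciprocal directly as
\[
\frac{\qhyper21{aq,bq}{cq}{q,z}}{\qhyper21{a,b}c{q,z}}
= \frac{1-c}{c_0(z)}\,\KK_{m=0}^\infty\!\left(\frac{c_m(z)}{e_m+d_mz}\right),
\]
and then applies Lemma~\ref{lem:K=K} once with the explicit choice $c_i=1/(1-cq^i)$. Your concern about alternating signs is unwarranted: no factor $(-1)^i$ is needed. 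After the rescaling the $m$th numerator is exactly
\[
\frac{c_m(z)}{(1-cq^{m-1})(1-cq^m)}
= \frac{(1-aq^m)(1-bq^m)q^{m-1}(cz-abq^m z^2)}{(1-cq^{m-1})(1-cq^m)}
= -(a_m z+\lambda_m z^2),
\]
so the ``$+$'' in the $\KK$ notation becomes the ``$-$'' in the type $R_I$ form automatically. The leading factor $(1-cq^{-1})(1-c)/c_0(z)$ then cancels against the $m=0$ numerator, leaving the top $1$ in the displayed fraction.
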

\begin{proof}
  By taking the inverse on each side of the equation in Lemma~\ref{lem:norlund}
  we obtain
\begin{equation}
  \label{eq:bm}
\frac{\qhyper21{aq,bq}{cq}{q,z}}{\qhyper21{a,b}c{q,z}}= 
\frac{1-c}{c_0(z)} \KK_{m=0}^\infty
\left( \frac{c_m(z)}{e_m+d_mz} \right).
\end{equation}
Applying Lemma~\ref{lem:K=K} with $c_i=1/(1-cq^i)$ and $m\to\infty$ yields
\[
\frac{\qhyper21{aq,bq}{cq}{q,z}}{\qhyper21{a,b}c{q,z}}= 
\frac{(1-cq^{-1})(1-c)}{c_0(z)} \KK_{m=0}^\infty
\left( \frac{c_m(z)/(1-cq^{m-1})(1-cq^m)}
  {e_m/(1-cq^m) + d_mz/(1-cq^m)} \right),
\]
which is the same as the desired identity.
\end{proof}

\begin{proof}[Proof of Theorem~\ref{thm:thirdmom}]
  Replace \( z\) by \(z/b \), put \( a=0 \), and let \( b\to\infty \) in Proposition~\ref{prop:q-Norlund}.
  The result is Theorem~\ref{thm:thirdmom}.
\end{proof}

Note that when $b=0$ both Lemma~\ref{lem:Heine} and 
Proposition~\ref{prop:q-Norlund} give a continued fraction expression for
$$
\frac{\qhyper21{a,0}c{q,z}}{\qhyper21{aq,0}{cq}{q,z}}.
$$ 
Therefore we obtain the following theorem.

\begin{thm} 
\label{thm:genequalmom}
We have the equality of continued fractions
\[
 \cfrac{1}{1-b_0z - \cfrac{a_1z}{1-b_1z-\cfrac{a_2z}{1-b_2z-\cdots}}}
=  \cfrac{1}{1-\cfrac{\lambda_1z}{1-\cfrac{\lambda_2z}{1-\cdots}}},
\]
where
$$
  a_{n} = \frac{(aq^n-1)cq^{n-1}}{(1-cq^{n-1})(1-cq^{n})},
  \qquad
 b_{n} = \frac{aq^{n}}{1-cq^{n}},
$$
$$
  \lambda_{2n} = \frac{-cq^{2n-1}(1-aq^n)}{(1-cq^{2n-1})(1-cq^{2n})},
  \qquad
  \lambda_{2n+1} = \frac{(a-cq^n)q^{n}}{(1-cq^{2n})(1-cq^{2n+1})}.
$$
\end{thm}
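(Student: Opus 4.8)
The plan is to recognize Theorem~\ref{thm:genequalmom} as the equality obtained by writing the single ratio
\[
\frac{\qhyper21{a,0}c{q,z}}{\qhyper21{aq,0}{cq}{q,z}}
\]
in two different ways: once via Heine's fraction (Lemma~\ref{lem:Heine}) and once via the restated $q$-N\"orlund fraction (Proposition~\ref{prop:q-Norlund}). Both continued fractions are for the \emph{same} analytic (formal power series) object, so they are equal as formal power series in $z$, and equating them gives the displayed identity. The bulk of the work is bookkeeping: one must carefully substitute $b=0$ into each lemma, simplify the resulting recurrence coefficients, and check that they match the $a_n$, $b_n$, $\lambda_{2n}$, $\lambda_{2n+1}$ stated in the theorem.

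Concretely, first I would specialize Lemma~\ref{lem:Heine} to $b=0$. The $\beta_k$ become
\[
\beta_{2n+1} = \frac{(a-cq^n)q^{n}}{(1-cq^{2n})(1-cq^{2n+1})},
\qquad
\beta_{2n} = \frac{(1-aq^n)(-cq^n)q^{n-1}}{(1-cq^{2n-1})(1-cq^{2n})},
\]
which are exactly the $\lambda_{2n+1}$ and $\lambda_{2n}$ in the theorem statement (the sign in $\beta_{2n}$ coming from $b-cq^n = -cq^n$). This identifies the right-hand continued fraction with $\qhyper21{aq,0}{cq}{q,z}/\qhyper21{a,0}{c}{q,z}$, hence its reciprocal is the left-hand object's reciprocal — but I should be careful about which ratio sits on top. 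Then I would specialize Proposition~\ref{prop:q-Norlund} to $b=0$: the coefficients collapse to $b_m = aq^m/(1-cq^m)$, $a_m = -(1-aq^m)cq^{m-1}/((1-cq^{m-1})(1-cq^m)) = (aq^m-1)cq^{m-1}/((1-cq^{m-1})(1-cq^m))$, and $\lambda_m = 0$ since $\lambda_m$ carries a factor of $b$. So Proposition~\ref{prop:q-Norlund} with $b=0$ gives precisely the $J$-type (no $\lambda z^2$ term beyond the $a_m z$) continued fraction
\[
\cfrac{1}{1-b_0z-\cfrac{a_1z}{1-b_1z-\cfrac{a_2z}{1-\cdots}}}
= \frac{\qhyper21{aq,0}{cq}{q,z}}{\qhyper21{a,0}{c}{q,z}},
\]
matching the left-hand side of the theorem with the stated $a_n$, $b_n$. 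Since both Lemma~\ref{lem:Heine} (at $b=0$) and Proposition~\ref{prop:q-Norlund} (at $b=0$) compute the \emph{same} ratio of ${}_2\phi_1$'s, the two continued fractions are equal, which is the assertion.

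I expect the main obstacle to be purely clerical rather than conceptual: making sure the two lemmas are expressing the \emph{same} ratio (Lemma~\ref{lem:Heine} gives $\qhyper21{aq,b}{cq}{q,z}/\qhyper21{a,b}{c}{q,z}$, while Proposition~\ref{prop:q-Norlund} gives $\qhyper21{aq,bq}{cq}{q,z}/\qhyper21{a,b}c{q,z}$, and only at $b=0$ do the numerators coincide since $bq=0$ too), and then verifying that the sign conventions in the $\lambda_{2n}$ and $a_n$ coefficients agree after the substitution. A short sentence noting that both continued fractions converge as formal power series in $z$ (guaranteed by the general theory recalled in Section~\ref{sec:moments-cont-fract}, since each is of the prescribed $J$- or $R_I$-form) suffices to justify equating them. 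Thus the proof is essentially: ``Set $b=0$ in Lemma~\ref{lem:Heine} and in Proposition~\ref{prop:q-Norlund}; both yield continued fractions equal to $\qhyper21{aq,0}{cq}{q,z}/\qhyper21{a,0}{c}{q,z}$; comparing the coefficients gives the claimed identity.''
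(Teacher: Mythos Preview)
Your proposal is correct and is exactly the paper's approach: the paper simply observes that when $b=0$ both Lemma~\ref{lem:Heine} and Proposition~\ref{prop:q-Norlund} give a continued fraction for the same ratio $\qhyper21{aq,0}{cq}{q,z}/\qhyper21{a,0}{c}{q,z}$, and Theorem~\ref{thm:genequalmom} follows immediately. Your extra bookkeeping (checking that $bq=0$ makes the two numerators agree, and verifying the specialized coefficients match the stated $a_n,b_n,\lambda_{2n},\lambda_{2n+1}$) is exactly the verification the paper leaves implicit.
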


When Theorem~\ref{thm:genequalmom} is interpreted as an equality 
for moment generating functions, we 
find the following generalization of Theorem~\ref{thm:equalmom} which holds
for $q$-Lommel polynomials. 

\begin{cor} Let $\lambda_n, a_n$ and $b_n$ be given by 
Theorem~\ref{thm:genequalmom}. 
The $2n^{\mathrm{th}}$ moment of the orthogonal polynomials defined by 
$p_{n+1}(x)=xp_n(x)-\lambda_n p_{n-1}(x)$ is equal to the $n^{\mathrm{th}}$ 
moment of the type $R_I$ polynomials defined by
$r_{n+1}(x)=(x-b_n)r_n(x)-a_n x r_{n-1}(x).$ 

\end{cor}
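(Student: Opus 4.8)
The plan is to recognize both sides of the continued-fraction identity in Theorem~\ref{thm:genequalmom} as moment generating functions of the two polynomial families in question, and then to compare coefficients of $z^m$.

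First I would analyze the right-hand side
\[
\cfrac{1}{1-\cfrac{\lambda_1z}{1-\cfrac{\lambda_2z}{1-\cdots}}}.
\]
Under the substitution $z=t^2$ this is precisely the Jacobi continued fraction \eqref{eq:Jaccontfrac} associated to the symmetric three-term recurrence $p_{n+1}(x)=xp_n(x)-\lambda_n p_{n-1}(x)$, in which every $b_n=0$. Since that recurrence is symmetric, the odd moments $L_p(x^{2m+1})$ all vanish, so \eqref{eq:Jaccontfrac} reduces to $\sum_{m\ge0}L_p(x^{2m})t^{2m}$. Reverting the substitution $z=t^2$ gives the identity of formal power series
\[
\cfrac{1}{1-\cfrac{\lambda_1z}{1-\cfrac{\lambda_2z}{1-\cdots}}}=\sum_{m\ge0}L_p(x^{2m})z^m.
\]

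Next I would analyze the left-hand side
\[
\cfrac{1}{1-b_0z - \cfrac{a_1z}{1-b_1z-\cfrac{a_2z}{1-b_2z-\cdots}}},
\]
which is exactly the type $R_I$ continued fraction \eqref{eq:typeIcontfrac} specialized to the case where every $\lambda_j=0$; that is, it is the moment generating function for the type $R_I$ orthogonal polynomials $r_{n+1}(x)=(x-b_n)r_n(x)-a_n x\,r_{n-1}(x)$. Hence the left-hand side equals $\sum_{m\ge0}L_r(x^m)z^m$.

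Combining the two identifications, Theorem~\ref{thm:genequalmom} asserts $\sum_{m\ge0}L_r(x^m)z^m=\sum_{m\ge0}L_p(x^{2m})z^m$, and extracting the coefficient of $z^m$ yields $L_r(x^m)=L_p(x^{2m})$ for all $m\ge0$, which is the claim. I expect no genuine obstacle here: the whole content is already contained in Theorem~\ref{thm:genequalmom}, and the corollary is simply its translation through the standard dictionary \eqref{eq:Jaccontfrac}, \eqref{eq:typeIcontfrac} between three-term recurrences and continued fractions. The only things to verify are the routine bookkeeping that the bare continued fractions appearing in Theorem~\ref{thm:genequalmom} are the $z\mapsto t^2$ (respectively $z\mapsto z$) specializations of those in \eqref{eq:Jaccontfrac} and \eqref{eq:typeIcontfrac}, and that both expansions are legitimate as identities of formal power series in $z$.
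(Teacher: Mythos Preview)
Your proposal is correct and matches the paper's own reasoning: the corollary is stated immediately after Theorem~\ref{thm:genequalmom} with the remark that it is obtained by interpreting that equality of continued fractions as an equality of moment generating functions, which is exactly what you do via \eqref{eq:Jaccontfrac} and \eqref{eq:typeIcontfrac}.
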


\section{Combinatorics of moments of type \( R_I \) $q$-Lommel polynomials}
\label{sec:ratios-q-lommel}

The moment generating function for type \( R_I \) polynomials is given by the
continued fraction in \eqref{eq:typeIcontfrac}. For type \( R_I \) \( q \)-Lommel polynomials
we give in this section a general combinatorial interpretation for this
infinite continued fraction in terms of parallelogram polyominoes. 
We also interpret the finite continued fraction and give an explicit rational
expression using \( q \)-Lommel polynomials.
To be specific we give a combinatorial interpretation for the ratio 
\[
  r^{(3)}_{n}(x^{-1};q^{\nu+2},q)/r^{(3)}_{n+1}(x^{-1};q^{\nu+1},q) 
\]
of (rescaled) type \( R_I \) $q$-Lommel polynomials, Theorem~\ref{thm:ratio of
  R-1}. This is a finite version of the result of Bousquet-M\'elou and Viennot
\cite{Bousquet-Melou_1992}.
The \( n\to\infty \) limit of Theorem~\ref{thm:ratio of
  R-1} yields a quotient of \( q \)-Bessel functions,
\[
  J^{(3)}_{\nu+1}(x^{1/2};q^{-1})/J^{(3)}_{\nu}(x^{1/2};q^{-1}) 
\]
which is the moment generating function for the type \( R_I \) \( q \)-Lommel
polynomials. This material appears in our unpublished manuscript
\cite[Section~5]{kim20:_ratios_hahn}.

We shall need several definitions related to parallelogram polyominoes
and Motzkin paths.

\begin{defn}
  An \emph{NE-path} is a lattice path from \( (0,0) \) to \( (a,b) \) for some
  positive integers \( a,b \) consisting of north steps \( (0,1) \) and east
  steps \( (1,0) \). A \emph{parallelogram polyomino} is a set of unit squares
  enclosed by two NE-paths with the same ending points that do not intersect
  except the starting and ending points. Denote by \( \PP \) the set of
  parallelogram polyominoes.
\end{defn}

For a parallelogram polyomino \( \alpha\in\PP \) let $U(\alpha)$ be the upper
boundary path and $D(\alpha)$ the lower boundary path, see Figure~\ref{fig:UD
  paths}. A \emph{diagonal} of \( \alpha \) is the set of squares in \(
\alpha \) whose centers are on the line \( x+y=i \) for some integer \( i \).
The \emph{size} of a diagonal is the number of squares in it. See
Figure~\ref{fig:diag}.

\begin{defn}
  We denote by \( \PP^{\le k} \) the set of parallelogram polyominoes
  in which every diagonal has size at most \( k \).
\end{defn}

\begin{figure}
  \centering
  \begin{tikzpicture}[scale=0.5]
    \draw[help lines] (0,0) grid (8,7);
    \draw[line width = 1.5pt] (0,0)--(0,4)--(3,4)--(3,6)--(4,6)--(4,7)--(8,7);
    \draw[line width = 1.5pt,dashed] (0,0)--(3,0)--(3,1)--(4,1)--(4,2)--(7,2)--(7,6)
    --(8,6)--(8,7);
    \node[fill=white] at (1.5,5) {$U(\alpha)$};
    \node[fill=white] at (5.5,1) {$D(\alpha)$};
  \end{tikzpicture}
  \caption{The boundary paths $U(\alpha)$ and $D(\alpha)$ for a parallelogram polyomino.}
  \label{fig:UD paths}
\end{figure}
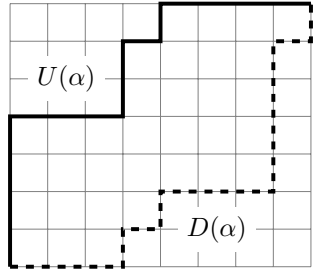

\begin{figure}
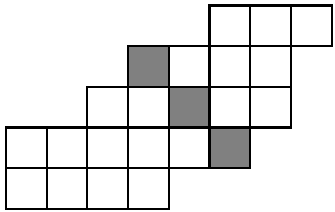

  \centering
  \begin{ytableau}
    \none&\none&    \none & \none & \none &&&\\
    \none&\none&    \none & *(gray) &&&\\
    \none&\none&    {}& &*(gray)&&\\
    {}&{}&    {}& &&*(gray)\\
    {}&{}&    {}& \\
  \end{ytableau}
  \caption{A diagonal with size \( 3 \) in a parallelogram polyomino.}
  \label{fig:diag}
\end{figure}

Consider \( \alpha\in\PP \) and a diagonal \( \tau \) of \( \alpha \). Let \( u
\) (resp.~\( d \)) be the northwest (resp.~southeast) corner of the topmost
(resp.~bottom-most) square of \( \tau \). We say that \( d \) is an
\emph{NN-diagonal} (resp.~\emph{NE-diagonal}, \emph{EN-diagonal}, and
\emph{EE-diagonal}) if the step in \( U(\alpha) \) starting at \( u \) is a
north (resp.~north, east, and east) step and the step in \( D(\alpha) \)
starting at \( d \) is a north (resp.~east, north, and east) step.
See Figure~\ref{fig:NN-diag}.

\begin{figure}
  \centering
\begin{tikzpicture}[scale=0.5]
  \draw (0,0) rectangle (1,1);
  \draw (1,-1) rectangle (2,0);
  \node at (2.3,-1.3) [circle,fill,inner sep=0.5pt]{};
  \node at (2.5,-1.5) [circle,fill,inner sep=0.5pt]{};
  \node at (2.7,-1.7) [circle,fill,inner sep=0.5pt]{};
  \draw (3,-3) rectangle (4,-2);
  \draw[line width = 2pt] (0,1)--(0,2);
  \draw[line width = 2pt] (4,-3)--(4,-2);
  \draw [decorate,decoration={brace,amplitude=5pt},xshift=-3pt,yshift=-3pt]
  (3,-3) -- (0,0) node [black,midway,xshift=-12pt, yshift=-10pt]{$n+1$};

  \begin{scope}[shift={(8,0)}]
  \draw (0,0) rectangle (1,1);
  \draw (1,-1) rectangle (2,0);
  \node at (2.3,-1.3) [circle,fill,inner sep=0.5pt]{};
  \node at (2.5,-1.5) [circle,fill,inner sep=0.5pt]{};
  \node at (2.7,-1.7) [circle,fill,inner sep=0.5pt]{};
  \draw (3,-3) rectangle (4,-2);
  \draw[line width = 2pt] (0,1)--(1,1);
  \draw[line width = 2pt] (4,-3)--(5,-3);
  \draw [decorate,decoration={brace,amplitude=5pt},xshift=-3pt,yshift=-3pt]
  (3,-3) -- (0,0) node [black,midway,xshift=-12pt, yshift=-10pt]{$n+1$};
  \end{scope}
  
  \begin{scope}[shift={(16,0)}]
  \draw (0,0) rectangle (1,1);
  \draw (1,-1) rectangle (2,0);
  \node at (2.3,-1.3) [circle,fill,inner sep=0.5pt]{};
  \node at (2.5,-1.5) [circle,fill,inner sep=0.5pt]{};
  \node at (2.7,-1.7) [circle,fill,inner sep=0.5pt]{};
  \draw (3,-3) rectangle (4,-2);
  \draw[line width = 2pt] (0,1)--(0,2);
  \draw[line width = 2pt] (4,-3)--(5,-3);
  \draw [decorate,decoration={brace,amplitude=5pt},xshift=-3pt,yshift=-3pt]
  (3,-3) -- (0,0) node [black,midway,xshift=-12pt, yshift=-10pt]{$n+1$};
  \end{scope}

  \begin{scope}[shift={(24,0)}]
  \draw (0,0) rectangle (1,1);
  \draw (1,-1) rectangle (2,0);
  \node at (2.3,-1.3) [circle,fill,inner sep=0.5pt]{};
  \node at (2.5,-1.5) [circle,fill,inner sep=0.5pt]{};
  \node at (2.7,-1.7) [circle,fill,inner sep=0.5pt]{};
  \draw (3,-3) rectangle (4,-2);
  \draw[line width = 2pt] (4,-3)--(4,-2);
  \draw[line width = 2pt] (0,1)--(1,1);
  \draw [decorate,decoration={brace,amplitude=5pt},xshift=-3pt,yshift=-3pt]
  (3,-3) -- (0,0) node [black,midway,xshift=-12pt, yshift=-10pt]{$n+1$};
  \end{scope}
\end{tikzpicture}
\caption{From left to right are shown
  an NN-diagonal, EE-diagonal, NE-diagonal, and EN-diagonal of size \( n+1 \)
  whose weights are, respectively, \( a_n,b_n,c_n \), and \( d_n \).}
  \label{fig:NN-diag}
\end{figure}
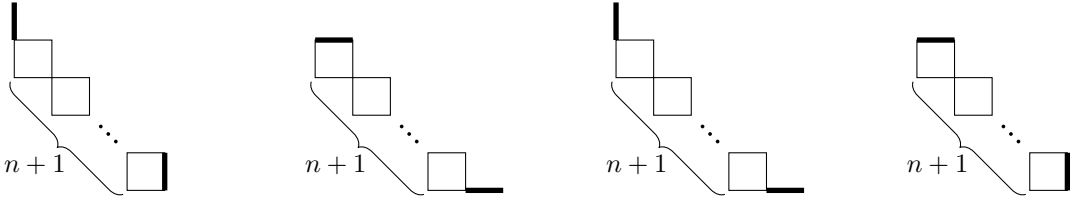

For sequences $\{a_n\}_{n\ge0}, \{b_n\}_{n\ge0}, \{c_n\}_{n\ge0}$, and $\{d_n\}_{n\ge0}$, define the
\emph{weight} $\wt(\alpha;a,b,c,d)$ of $\alpha\in\PP$ to be the product of $a_n$
(resp.~$b_n$, $c_{n}$, and $d_{n}$) for each NN-diagonal (resp.~EE-diagonal,
NE-diagonal, and EN-diagonal) of size $n+1$.

Now we review Flajolet's theory \cite{Flajolet1980} on continued fraction
expressions for Motzkin path generating functions.

\begin{defn}
A \emph{Motzkin path} is a lattice path from $(0,0)$ to $(n,0)$ consisting of up
steps $(1,1)$, down steps $(1,-1)$, and horizontal steps $(1,0)$ that never goes
below the $x$-axis. A \emph{2-Motzkin path} is a Motzkin path in which
every horizontal step is colored red or blue. The \emph{height} of a 2-Motzkin
path is the largest integer $y$ for which $(x,y)$ is a point in the path.

Denote by $\Mot_2$ the set of all 2-Motzkin paths and by
$\Mot^{\le m}_2$ the set of all 2-Motzkin paths with height at most $m$.

For sequences $\{a_n\}_{n\ge0}, \{b_n\}_{n\ge0}, \{c_n\}_{n\ge0}$, and $\{d_n\}_{n\ge0}$, define the
\emph{weight} $\wt(p;a,b,c,d)$ of a 2-Motzkin path $p$ to be the product of
$a_n$ (resp.~$b_n$, $c_{n}$, and $d_{n}$) for each red horizontal step
(resp.~blue horizontal step, up step, and down step) starting at height $n$, see
Figure~\ref{fig:Motz}.
\end{defn}

\begin{figure}
  \centering
\begin{tikzpicture}[scale=0.75]
    \draw[help lines] (0,0) grid (13,3);
    \draw[line width = 1.5pt] (0,0)-- ++(1,1)-- ++(1,1)-- ++(1,0)-- ++(1,0)-- ++(1,-1)-- ++(1,0)--
    ++(1,1)-- ++(1,1)-- ++(1,-1)-- ++(1,0)-- ++(1,-1)-- ++(1,-1)-- ++(1,0);
    \node at (.3,.7) {$c_{0}$};
    \node at (1.3,1.7) {$c_{1}$};
    \node at (6.3,1.7) {$c_{1}$};
    \node at (7.3,2.7) {$c_{2}$};
    \node at (2.5,2.4) {$a_{2}$};
    \node at (3.5,2.4) {$b_{2}$};
    \node at (4.7,1.7) {$d_{2}$};
    \node at (5.5,1.4) {$b_{1}$};
    \node at (8.7,2.7) {$d_{3}$};
    \node at (9.5,2.4) {$a_{2}$};
    \node at (10.7,1.7) {$d_{2}$};
    \node at (11.7,0.7) {$d_{1}$};
    \node at (12.5,0.4) {$b_{0}$};
    \draw[line width = 1.5pt, red] (2,2)--(3,2);
    \draw[line width = 1.5pt, blue,double] (3,2)--(4,2);
    \draw[line width = 1.5pt, blue,double] (5,1)--(6,1);
    \draw[line width = 1.5pt, red] (9,2)--(10,2);
    \draw[line width = 1.5pt, blue,double] (12,0)--(13,0);
\end{tikzpicture}
\caption{A 2-Motzkin path $p$ in $\Mot_2^{\le 3}$ with $\wt(p;a,b,c,d)=a_2^2
  b_0b_1b_2 c_0c_1^2c_2d_1d_2^2d_3$. The blue horizontal edges are represented by double
  edges.}
  \label{fig:Motz}
\end{figure}
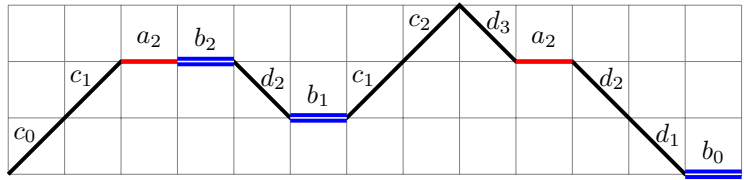

Flajolet's theory \cite{Flajolet1980} proves the following lemma
for a finite continued fraction.

\begin{lem}\label{lem:flajolet}
  Given sequences $\{a_n\}_{n\ge0}, \{b_n\}_{n\ge0}, \{c_n\}_{n\ge0}$, and $\{d_n\}_{n\ge0}$, we have
\[
\sum_{p\in\Mot_2^{\le m}} \wt(p;a,b,c,d) = \cfrac{1}{
  1-a_0-b_0 -\cfrac{c_0d_1}{
    1-a_1-b_1-
    \genfrac{}{}{0pt}{1}{}{\ddots \displaystyle - \cfrac{c_{m-1}d_m}{1-a_m-b_m}}
}}.
\]
\end{lem}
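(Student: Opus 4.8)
\textbf{Proof proposal for Lemma~\ref{lem:flajolet} (Flajolet's fraction, finite version).}

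The plan is to prove the identity by induction on the height bound $m$, peeling off the bottom level of the path. First I would set up notation: for each $j\ge 0$ let $F_j^{(m)}$ denote the generating function $\sum_p \wt(p;a,b,c,d)$ over 2-Motzkin paths that start and end at height $j$, stay at height $\le m$, and never descend below height $j$; these are the "level-$j$ arches." The key structural observation is the standard first-return decomposition: a path counted by $F_j^{(m)}$ either is empty, or begins with a horizontal step at height $j$ (red with weight $a_j$, or blue with weight $b_j$) followed by another path of the same type, or begins with an up step ($c_j$), followed by an arbitrary level-$(j+1)$ path counted by $F_{j+1}^{(m)}$, a down step ($d_{j+1}$), and then another level-$j$ path. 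This gives the algebraic relation
\[
F_j^{(m)} = 1 + (a_j+b_j)F_j^{(m)} + c_j d_{j+1} F_{j+1}^{(m)} F_j^{(m)},
\]
valid for $0\le j < m$, together with the boundary relation $F_m^{(m)} = 1 + (a_m+b_m)F_m^{(m)}$, i.e.\ $F_m^{(m)} = 1/(1-a_m-b_m)$. Solving the recursion for $F_j^{(m)}$ yields
\[
F_j^{(m)} = \cfrac{1}{1-a_j-b_j-c_jd_{j+1}F_{j+1}^{(m)}},
\]
and unwinding this finite recursion from $j=m$ down to $j=0$ produces exactly the finite continued fraction in the statement, since $\sum_{p\in\Mot_2^{\le m}}\wt(p;a,b,c,d) = F_0^{(m)}$.

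The only subtlety to address carefully is that these are identities of formal power series (or formal generating functions weighted by the $a_i,b_i,c_i,d_i$), so I should note that each $F_j^{(m)}$ is a well-defined formal object — e.g.\ grade by the number of steps, so that the empty path contributes $1$ and every nonempty term has positive degree, making the geometric-series inversion $F_j^{(m)} = 1/(1-a_j-b_j-c_jd_{j+1}F_{j+1}^{(m)})$ legitimate. One also checks that the first-return decomposition is a genuine bijection: any nonempty 2-Motzkin path of height $\le m$ staying at height $\ge j$ has a unique first step, and in the up-step case a unique first return to height $j$, which splits the path into the claimed pieces with multiplicative weights (the weight of a concatenation is the product of weights, since the weight of a step depends only on its starting height, which is preserved under translation).

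The main obstacle — really the only place requiring care rather than routine bookkeeping — is verifying that the weight function is genuinely multiplicative under the decomposition, i.e.\ that shifting a sub-path vertically by $j$ (when it is read as a level-$(j+1)$ excursion inside a level-$j$ arch) reindexes its step weights correctly so that a step originally "starting at height $n$" in the sub-path becomes a step "starting at height $n$" in the full path after the $+1$ shift is accounted for in the $F_{j+1}^{(m)}$ bookkeeping. Once that is pinned down, the recursion and its solution are immediate, and the induction (or direct unwinding of the finite recursion) closes the proof. I would present this as a short direct argument rather than invoking Flajolet's general theorem as a black box, since the finite-height truncation is exactly the transparent part of that theory.
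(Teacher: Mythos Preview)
Your argument is correct: the first-return decomposition at the bottom level gives exactly the recursion $F_j^{(m)} = 1/(1-a_j-b_j-c_jd_{j+1}F_{j+1}^{(m)})$ with boundary $F_m^{(m)} = 1/(1-a_m-b_m)$, and unwinding it yields the stated finite continued fraction. The paper itself does not give a proof of this lemma at all; it simply invokes Flajolet's theory \cite{Flajolet1980} as a black box, so your write-up is not a different route but rather an explicit unpacking of the very argument the paper is citing.
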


There is a well known bijection between 2-Motzkin paths and parallelogram polyominoes. 

\begin{defn} [The map $\phi:\Mot^{\le m}_2 \to \PP^{\le m+1}$]
  Let $p\in \Mot_2^{\le m}$.  
  Then $\phi(p)=\alpha$ is the parallelogram polyomino whose upper and lower
  boundary paths $U,D$ are constructed by the following algorithm.
  \begin{enumerate}
  \item The first step of $U$ (resp.~$D$) is a north (resp.~east) step.
  \item For $i=1,2,\dots,n$, where $n$ is the number of steps in $p$,
    the $(i+1)^{\mathrm{st}}$ steps of $U$ and $D$ are defined as follows.
    \begin{enumerate}
    \item If the $i^{\mathrm{th}}$ step of $p$ is an up step, then
      the $(i+1)^{\mathrm{st}}$ step of $U$ (resp.~$D$) is a north (resp.~east) step.
    \item If the $i^{\mathrm{th}}$ step of $p$ is a down step, then
      the $(i+1)^{\mathrm{st}}$ step of $U$ (resp.~$D$) is a east (resp.~north) step.
    \item If the $i^{\mathrm{th}}$ step of $p$ is a red horizontal step, then
      the $(i+1)^{\mathrm{st}}$ steps of $U$ and $D$ are both north steps.
    \item If the $i^{\mathrm{th}}$ step of $p$ is a blue horizontal step, then
      the $(i+1)^{\mathrm{st}}$ steps of $U$ and $D$ are both east steps.
    \end{enumerate}
  \item Finally, the last step of $U$ (resp.~$D$) is an east
    (resp.~north) step.
  \end{enumerate}
\end{defn}

For example, if $p$ is the $2$-Motzkin path in Figure~\ref{fig:Motz}, then
$\phi(p)$ is the parallelogram polyomino $\alpha$ in Figure~\ref{fig:UD paths}.

It is easy see from the construction that $\phi:\Mot^{\le m}_2 \to \PP^{\le
  m+1}$ is a bijection such that if \( \phi(p)=\alpha \), then \(
\wt(\alpha;a,b,c,d) = d_0 \wt(p;a,b,c,d) \).

Therefore we obtain the following proposition from Lemma~\ref{lem:flajolet},
which changes the weighted 2-Motzkin paths into weighted parallelogram
polyominoes.

\begin{prop}\label{prop:PP}
  Given sequences $\{a_n\}_{n\ge0}, \{b_n\}_{n\ge0}, \{c_n\}_{n\ge0}$, and $\{d_n\}_{n\ge0}$, we have
  \[
    \sum_{\alpha\in\PP^{\le m+1}} \wt(\alpha;a,b,c,d) = \cfrac{d_0}{
      1-a_0-b_0 -\cfrac{c_0d_1}{
        1-a_1-b_1-
        \genfrac{}{}{0pt}{1}{}{\ddots \displaystyle - \cfrac{c_{m-1}d_m}{1-a_m-b_m}}
      }}.
  \]
\end{prop}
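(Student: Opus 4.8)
The plan is to derive Proposition~\ref{prop:PP} directly by combining the bijection $\phi:\Mot^{\le m}_2 \to \PP^{\le m+1}$ with Lemma~\ref{lem:flajolet}. First I would recall the key property of $\phi$ noted just before the statement: it is a weight-preserving bijection up to the factor $d_0$, that is, if $\phi(p)=\alpha$ then $\wt(\alpha;a,b,c,d) = d_0\,\wt(p;a,b,c,d)$. This factor arises because the very last step of $D(\alpha)$ is always a north step, creating an EN-diagonal of size $1$ in $\alpha$ (contributing $d_0$) which has no counterpart among the steps of $p$. The correspondence between the step types of $p$ at height $n$ and the diagonal types of $\alpha$ of size $n+1$ is exactly the content of the construction: up/down/red/blue steps at height $n$ become EN/NE/NN/EE-diagonals (or the appropriate match) of size $n+1$, which is why the weights line up.

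Next I would simply sum the identity $\wt(\alpha;a,b,c,d) = d_0\,\wt(p;a,b,c,d)$ over all $p\in\Mot^{\le m}_2$, using that $\phi$ is a bijection onto $\PP^{\le m+1}$:
\[
\sum_{\alpha\in\PP^{\le m+1}} \wt(\alpha;a,b,c,d)
= d_0 \sum_{p\in\Mot^{\le m}_2} \wt(p;a,b,c,d).
\]
Then I would invoke Lemma~\ref{lem:flajolet} to replace the right-hand sum by its finite continued fraction, obtaining
\[
\sum_{\alpha\in\PP^{\le m+1}} \wt(\alpha;a,b,c,d)
= d_0 \cdot \cfrac{1}{
  1-a_0-b_0 -\cfrac{c_0d_1}{
    1-a_1-b_1-
    \genfrac{}{}{0pt}{1}{}{\ddots \displaystyle - \cfrac{c_{m-1}d_m}{1-a_m-b_m}}
}},
\]
which is precisely the claimed formula after moving $d_0$ into the numerator.

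Since the bijection $\phi$ and its weight-compatibility are already established in the excerpt, and Lemma~\ref{lem:flajolet} is taken as given (Flajolet's theory), there is essentially no obstacle here; the proposition is a formal consequence. If I wanted to make the argument self-contained I would expect the only nontrivial point to be a careful verification of the height-to-size shift and the claim that exactly one extra EN-diagonal of size $1$ appears in every $\alpha$ in the image — but this is routine case-checking against the four cases (a)--(d) of the definition of $\phi$ together with the initial and final steps, and the excerpt already asserts it. So the proof is a two-line deduction.
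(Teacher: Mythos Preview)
Your proof is correct and follows exactly the paper's approach: the paper states the weight relation $\wt(\alpha;a,b,c,d) = d_0\,\wt(p;a,b,c,d)$ for the bijection $\phi$ and then immediately deduces the proposition from Lemma~\ref{lem:flajolet}, which is precisely what you do. (One tiny slip: up steps correspond to NE-diagonals and down steps to EN-diagonals, not the reverse, but your hedge ``or the appropriate match'' covers this and it does not affect the argument.)
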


As a special case in Proposition~\ref{prop:PP}, if $\{a_n\}_{n\ge0}, \{b_n\}_{n\ge0}, \{c_n\}_{n\ge0}$, and $\{d_n\}_{n\ge0}$ are the sequences given by $a_n=q^{n+1}Y$, $b_n=q^{n+1}X$,
$c_n=q^{n+1}XY$, and $d_n=q^{n+1}$, then one can easily check that
\[
  XY\cdot \wt(\alpha;a,b,c,d) =   X^{\col(\alpha)} Y^{\row(\alpha)} q^{\area(\alpha)}.
\]
Thus we obtain the following corollary.

\begin{cor}\label{cor:XYq}
  We have
  \[
    \sum_{\alpha\in \PP^{\le m+1}} X^{\col(\alpha)} Y^{\row(\alpha)} q^{\area(\alpha)}= \cfrac{qXY}{
      1-q(X+Y) -\cfrac{q^3XY}{
        1-q^2(X+Y)-
        \genfrac{}{}{0pt}{1}{}{\displaystyle\ddots - \cfrac{q^{2m+1}XY}{1-q^{m+1}(X+Y)}}
      }}.
  \]
\end{cor}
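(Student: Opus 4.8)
The plan is to derive Corollary~\ref{cor:XYq} directly from Proposition~\ref{prop:PP} by making the indicated substitution and then verifying the two claims used along the way: first, the combinatorial identity $XY\cdot\wt(\alpha;a,b,c,d)=X^{\col(\alpha)}Y^{\row(\alpha)}q^{\area(\alpha)}$ for the specialized weights, and second, the resulting simplification of the continued fraction on the right-hand side.

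First I would establish the weight identity. Fix $\alpha\in\PP^{\le m+1}$. Recall that $\wt(\alpha;a,b,c,d)$ is the product over all diagonals of $\alpha$: an NN-, EE-, NE-, EN-diagonal of size $n+1$ contributes $a_n$, $b_n$, $c_n$, $d_n$ respectively. With $a_n=q^{n+1}Y$, $b_n=q^{n+1}X$, $c_n=q^{n+1}XY$, $d_n=q^{n+1}$, each diagonal of size $n+1$ contributes $q^{n+1}$ times a monomial in $X,Y$ equal to $XY$ (NE), $Y$ (NN), $X$ (EE), or $1$ (EN). So $\wt(\alpha;a,b,c,d)=q^{\sum(\text{diagonal sizes})}\cdot X^{\#\{\text{EE and NE diagonals}\}} Y^{\#\{\text{NN and NE diagonals}\}}$. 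The key observations are: (i) the sum of all diagonal sizes of a parallelogram polyomino is exactly its area; (ii) the number of columns of $\alpha$ equals one plus the number of diagonals whose bottom-most square's southeast corner begins an east step in $D(\alpha)$ --- that is, the EE- and EN-diagonals --- wait, I must instead count via $U(\alpha)$: the number of columns equals the number of east steps in $U(\alpha)$, and by the definition of the diagonal types, a north step of $U$ at the relevant corner corresponds to NN or NE, an east step to EN or EE. After a careful bookkeeping of the boundary steps against the four diagonal types (tracking that each diagonal "consumes" exactly one step of $U$ and one of $D$, plus the two extra steps at the start/end of the polyomino), one finds $\col(\alpha)=1+\#\{\text{EE}\}+\#\{\text{EN}\}$ does not match; the correct accounting gives $\col(\alpha)=\#\{\text{NE}\}+\#\{\text{EE}\}+1$ and $\row(\alpha)=\#\{\text{NE}\}+\#\{\text{NN}\}+1$. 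Hence $X^{\col}Y^{\row}q^{\area}=XY\cdot q^{\sum \text{sizes}}X^{\#\text{EE}+\#\text{NE}}Y^{\#\text{NN}+\#\text{NE}}=XY\cdot\wt(\alpha;a,b,c,d)$, as claimed. I would present this with a short diagram argument referencing Figure~\ref{fig:NN-diag}, since the authors say ``one can easily check.''

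Next I would substitute into Proposition~\ref{prop:PP}. The left side becomes $\sum_{\alpha\in\PP^{\le m+1}} XY\cdot\wt(\alpha;a,b,c,d)=XY\sum_\alpha\wt(\alpha;a,b,c,d)$, hence $\sum_\alpha X^{\col}Y^{\row}q^{\area}=XY$ times the right-hand continued fraction of Proposition~\ref{prop:PP}. On the right side, with these weights: $a_i+b_i=q^{i+1}(Y+X)$, so $1-a_i-b_i=1-q^{i+1}(X+Y)$, matching the denominators in Corollary~\ref{cor:XYq} after reindexing $i\mapsto i$; the numerator $d_0=q$ times the overall $XY$ prefactor gives $qXY$ at the top; and $c_{i-1}d_i=q^iXY\cdot q^{i+1}=q^{2i+1}XY$, which for $i=1,\dots,m$ gives the partial numerators $q^3XY, q^5XY,\dots,q^{2m+1}XY$. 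This exactly reproduces the stated continued fraction, completing the proof.

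The main obstacle is the second half of the weight-identity verification: the precise correspondence between the four diagonal types and the north/east steps of the boundary paths $U(\alpha),D(\alpha)$, and showing this yields $\col(\alpha)$ and $\row(\alpha)$ correctly (rather than off by one, or with the roles of $U$ and $D$ swapped). I expect this is genuinely routine once one carefully sets up which corner ($u$ or $d$) and which boundary path governs each of the NN/NE/EN/EE labels --- exactly as spelled out before Figure~\ref{fig:NN-diag} --- but it requires attention to the two ``boundary effect'' steps (the forced first step of $U$ being north and first step of $D$ being east, and symmetrically at the end) that account for the $+1$ in each of $\col$ and $\row$. Everything after that is formal substitution and reindexing, so no analytic difficulty arises.
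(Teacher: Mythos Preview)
Your proposal is correct and follows exactly the paper's approach: the paper derives Corollary~\ref{cor:XYq} by specializing $a_n=q^{n+1}Y$, $b_n=q^{n+1}X$, $c_n=q^{n+1}XY$, $d_n=q^{n+1}$ in Proposition~\ref{prop:PP}, noting (without details) that $XY\cdot\wt(\alpha;a,b,c,d)=X^{\col(\alpha)}Y^{\row(\alpha)}q^{\area(\alpha)}$. Your verification of this weight identity via $\col(\alpha)=1+\#\mathrm{NE}+\#\mathrm{EE}$ and $\row(\alpha)=1+\#\mathrm{NE}+\#\mathrm{NN}$ is right (though the exposition could be streamlined by committing to counting east steps of $D(\alpha)$ and north steps of $U(\alpha)$ from the outset), and the continued-fraction substitution is handled correctly.
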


For the rest of this section we will find a finite version of the following
result due to Bousquet-M\'elou and Viennot \cite{Bousquet-Melou_1992}.

\begin{thm}[\cite{Delest1993} for $\nu=0$ and \cite{Bousquet-Melou_1992} for
  general $\nu$]
\label{thm:DF-BM}
The tri-variate generating function for parallelogram polyominoes is
\[
  \sum_{\alpha\in \PP} (q^\nu x)^{\col(\alpha)} (q^\nu)^{\row(\alpha)} q^{\area(\alpha)} =
 -q^\nu x^{1/2} \frac{J^{(3)}_{\nu+1}(x^{1/2};q^{-1})}{J^{(3)}_{\nu}(x^{1/2};q^{-1})}.
\]
\end{thm}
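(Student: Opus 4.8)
The plan is to obtain Theorem~\ref{thm:DF-BM} as the $m\to\infty$ limit of Corollary~\ref{cor:XYq}, combined with the evaluation of the resulting infinite continued fraction as a quotient of $q$-Bessel functions. First I would substitute $X = x$, $Y = 1$ in Corollary~\ref{cor:XYq}, noting that then $(q^\nu)^{\col}$ should really be $(q^\nu x)^{\col}$; more precisely, to match the statement of the theorem I would keep track of how the variable $\nu$ enters. The cleanest route is: in Corollary~\ref{cor:XYq} the left side for $X=x$, $Y=1$ is $\sum_{\alpha\in\PP^{\le m+1}} x^{\col(\alpha)} q^{\area(\alpha)}$, but the theorem weights columns and rows by an extra $q^\nu$ each, and uses all of $\PP$. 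So instead I would apply Proposition~\ref{prop:PP} directly with $a_n = q^{\nu+n+1}$, $b_n = q^{\nu+n+1}x$, $c_n = q^{2\nu+2n+2}x$, $d_n = q^{\nu+n+1}$, chosen so that $q^{\nu}x\cdot q^{\nu}\cdot\wt(\alpha;a,b,c,d) = (q^\nu x)^{\col(\alpha)}(q^\nu)^{\row(\alpha)}q^{\area(\alpha)}$ (mirroring the computation preceding Corollary~\ref{cor:XYq} but with the shift $q\mapsto q$ and an extra factor $q^\nu$ in each of $X,Y$). Letting $m\to\infty$ and using that the sum over $\PP^{\le m+1}$ converges to the sum over all of $\PP$ as a formal power series (each polyomino of a given area has bounded diagonal size), I get the tri-variate generating function as an infinite continued fraction of type $R_I$ shape.

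The second step is to recognize that infinite continued fraction as a $q$-Bessel quotient. The continued fraction coming out of Proposition~\ref{prop:PP} in the limit has the form $\cfrac{1}{1-b_0z - \KK(a_n z / (1-b_n z))}$-type with $z$ absorbed into the parameters, which is exactly the shape appearing in Theorem~\ref{thm:thirdmom} for the type $R_I$ $q$-Lommel moment generating function. I would match parameters: Theorem~\ref{thm:thirdmom} gives, with $c = q^{\nu}$ (or a suitable reparametrization) and $z$ an appropriate monomial in $x$,
\[
  \frac{\qhyper11{0}{cq}{q;qz}}{\qhyper11{0}{c}{q;z}}
  = \cfrac{1}{1-b_0z - \cfrac{a_1z}{1-b_1z-\cdots}},
\]
with $a_n = cq^{2n-1}/((1-cq^{n-1})(1-cq^{n}))$, $b_n = q^n/(1-cq^n)$. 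The point is that after the rescaling $q\mapsto q^{-1}$ implicit in $J^{(3)}_\nu(\,\cdot\,;q^{-1})$ (compare \eqref{eq:r=J} and \eqref{eq:r=R}), the parameters $a_n,b_n$ of my polyomino continued fraction will coincide with $\hat a_n, \hat b_n$ from \eqref{eq:r3} up to a substitution of $q\to q^{-1}$ and a change of variable in $x$. Then \eqref{eq:r=J}, or directly the definition of $J^{(3)}_\nu$ together with Theorem~\ref{thm:thirdmom}, identifies the continued fraction with $-q^\nu x^{1/2} J^{(3)}_{\nu+1}(x^{1/2};q^{-1})/J^{(3)}_{\nu}(x^{1/2};q^{-1})$.

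The main obstacle, and where I would spend the most care, is bookkeeping the dictionary between the three descriptions of the same continued fraction: (i) the polyomino side with parameters $q^{\nu+n+1}$, $q^{\nu+n+1}x$, etc.; (ii) the $R_I$ $q$-Lommel side of Theorem~\ref{thm:thirdmom} with parameter $c$ and variable $z$; and (iii) the $q$-Bessel side with argument $x^{1/2}$ and base $q^{-1}$. In particular I must pin down the correct substitution $c = q^{?}$, the correct monomial $z = z(x,q)$, and track the overall prefactor $-q^\nu x^{1/2}$ (which should emerge from the $d_0$ in the numerator of Proposition~\ref{prop:PP} together with the $q^\nu x \cdot q^\nu$ normalization and the relation between $\qhyper11{0}{cq}{q;qz}$-type series and $J^{(3)}$ as given in Definition~\ref{defn:third}). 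Once that dictionary is fixed, verifying the $q\to q^{-1}$ compatibility is a routine check against \eqref{eq:R}, \eqref{eq:r=R}, \eqref{eq:r3}, and the sign comes from the factor $(q^{-\nu};q^{-1})_n$ versus $(c;q)_n$. I would also remark that the $\nu=0$ specialization recovers Delest--F\'edou, as the theorem statement notes.
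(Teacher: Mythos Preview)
Your approach is essentially the paper's, just with the limit taken at a different stage. The paper first proves the finite identity Theorem~\ref{thm:ratio of R-1} (bounded-diagonal polyomino generating function equals a ratio of $r^{(3)}$ polynomials), using exactly the ingredients you list---Proposition~\ref{prop:PP}/Corollary~\ref{cor:XYq}, the identity \eqref{eq:P/P=cont}, and the rescaling Lemma~\ref{lem:K=K}---and then lets $m\to\infty$ via the Koelink--Swarttouw limit \eqref{eq:r=J}. You propose taking $m\to\infty$ first on the continued-fraction side and then identifying the result via Theorem~\ref{thm:thirdmom} or \eqref{eq:r=J}; this is the same argument with the steps reordered.

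One point that is genuinely muddled in your outline: the passage from the polyomino continued fraction (whose levels carry the monomials $q^{\nu+i+1}(1+x)$ and $q^{2\nu+2i+1}x$) to the $r^{(3)}$-type fraction with coefficients $\hat a_i,\hat b_i$ from \eqref{eq:r3} is \emph{not} effected by a $q\mapsto q^{-1}$ substitution. It is the rescaling of Lemma~\ref{lem:K=K} with $c_i=1-q^{\nu+i+1}$, precisely as in the paper's proof of Theorem~\ref{thm:ratio of R-1}; without this step the parameters will not match no matter how you play with $q$ and $x$. The $q^{-1}$ in $J^{(3)}_\nu(x^{1/2};q^{-1})$ is a separate issue, already built into \eqref{eq:r=R} and hence into \eqref{eq:r=J}. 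If instead you go through Theorem~\ref{thm:thirdmom}, you land first on the equivalent base-$q$ form $\frac{qxy}{1-qy}\cdot\frac{\qhyper11{0}{q^2y}{q,q^2x}}{\qhyper11{0}{qy}{q,qx}}$ displayed just after the theorem, and one further rewriting is needed to reach the $J^{(3)}(\cdot;q^{-1})$ statement.
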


In fact Delest and F\'edou \cite{Delest1993} (for $\nu=0$), and Bousquet-M\'elou
and Viennot \cite{Bousquet-Melou_1992} state their results in the following
equivalent form:
\[
  \sum_{\alpha\in \PP} x^{\col(\alpha)} y^{\row(\alpha)} q^{\area(\alpha)} 
  =  \frac{qxy}{1-qy} \cdot \frac{\qhyper11{0}{q^2y}{q,q^2x}}{\qhyper11{0}{qy}{q,qx}} .
\]
Bousquet-M\'elou and Viennot \cite{Bousquet-Melou_1992} also showed that

\begin{equation}
\label{eq:BM1}
\sum_{\alpha\in \PP} x^{\col(\alpha)} y^{\row(\alpha)} q^{\area(\alpha)}
  = \cfrac{qxy}{
  1-q(x+y) -\cfrac{q^3xy}{
  1-q^2(x+y)- \cfrac{q^5xy}{\cdots}}}.
\end{equation}
We note that in \cite[Corollary 4.6]{Bousquet-Melou_1992} the sequence of the
coefficients of $(x+y)$ in the continued fraction \eqref{eq:BM1} was
inadvertently written $q,q^3,q^5,\dots$, where the correct sequence is
$q,q^2,q^3,\dots$. We also note that there are similar results in
\cite{Barcucci1998}.

\medskip

For a sequence \( s=\{s_n\}_{n\ge0} \), define \( \delta s=\{s_{n+1}\}_{n\ge0} \).
Kim and Stanton \cite[(5.4)]{kimstanton:R1} showed that for given sequences
$b=\{b_n\}_{n\ge0}$, $a=\{a_n\}_{n\ge0}$, and $\lambda=\{\lambda_n\}_{n\ge0}$,
and for a nonnegative integer \( k \),
\begin{equation}\label{eq:P/P=cont}
  \frac{x^m P_m(x^{-1};\delta b,\delta a,\delta \lambda)}{x^{m+1}P_{m+1}(x^{-1};b,a,\lambda)}
  = \frac{1}{-a_0x-\lambda_0x^2}\KK_{i=0}^m \left( \frac{-a_ix-\lambda_ix^2}{1-b_ix} \right).
\end{equation}

Now we are ready to prove a finite version of Theorem~\ref{thm:DF-BM}.

\begin{thm}\label{thm:ratio of R-1}
  The tri-variate generating function for bounded diagonal
  parallelogram polyominoes is
\[
  \sum_{\alpha\in \PP^{\le m+1}} (q^\nu x)^{\col(\alpha)} (q^\nu)^{\row(\alpha)}
  q^{\area(\alpha)}
  =\frac{q^{2\nu+1}}{1-q^{\nu+1}} \cdot \frac{r^{(3)}_{m}(x^{-1};q^{\nu+2},q)}{r^{(3)}_{m+1}(x^{-1};q^{\nu+1},q)}.
\]
\end{thm}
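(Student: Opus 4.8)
The plan is to take the explicit rational expression for the bounded-diagonal parallelogram polyomino generating function from Corollary~\ref{cor:XYq} and identify it with a ratio of type $R_I$ $q$-Lommel polynomials via the finite continued fraction identity \eqref{eq:P/P=cont}. First I would specialize Corollary~\ref{cor:XYq} to $X = q^\nu x$ and $Y = q^\nu$, obtaining
\[
  \sum_{\alpha\in \PP^{\le m+1}} (q^\nu x)^{\col(\alpha)} (q^\nu)^{\row(\alpha)} q^{\area(\alpha)}
  = \cfrac{q^{2\nu+1}x}{1-q^{\nu+1}(x+1)-\cfrac{q^{2\nu+3}x}{1-q^{\nu+2}(x+1)-\genfrac{}{}{0pt}{1}{}{\ddots}}},
\]
a finite continued fraction of depth $m+1$. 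The goal is then to recognize the right-hand side as $\frac{q^{2\nu+1}}{1-q^{\nu+1}}$ times the ratio $r^{(3)}_{m}(x^{-1};q^{\nu+2},q)/r^{(3)}_{m+1}(x^{-1};q^{\nu+1},q)$.

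**Key steps.** Second, I would recall from \eqref{eq:r3} that $r^{(3)}_n(x;c,q)$ is a type $R_I$ polynomial with recurrence coefficients $\hat b_n = cq^n/(1-cq^n)$ and $\hat a_n = c^2 q^{2n-1}/((1-cq^{n-1})(1-cq^n))$ and no $\lambda$-term ($\lambda_n = 0$). Applying \eqref{eq:P/P=cont} with $b_n = \hat b_n(q^{\nu+1})$, $a_n = \hat a_n(q^{\nu+1})$, $\lambda_n \equiv 0$, the shifted sequences $\delta b, \delta a$ are exactly the coefficient sequences $\hat b_n(q^{\nu+2}), \hat a_n(q^{\nu+2})$ corresponding to the parameter $c = q^{\nu+2}$ — this is the observation that shifting $n \mapsto n+1$ in $\hat b_n, \hat a_n$ is the same as replacing $c$ by $cq$. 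This gives
\[
  \frac{x^m r^{(3)}_m(x^{-1};q^{\nu+2},q)}{x^{m+1} r^{(3)}_{m+1}(x^{-1};q^{\nu+1},q)}
  = \frac{1}{-a_0 x}\KK_{i=0}^m\left(\frac{-a_i x}{1-b_i x}\right),
\]
where $a_0 = \hat a_0(q^{\nu+1}) = q^{2\nu+1}/(1-q^{\nu})(1-q^{\nu+1})$ — wait, here I must be careful with the index shift: in \eqref{eq:P/P=cont} the $i=0$ term uses $a_0, b_0$ from the \emph{unshifted} sequence. Third, I would convert the Gauss-form continued fraction $\KK$ into the Jacobi-type form appearing after Corollary~\ref{cor:XYq} by clearing denominators (using an equivalence transformation in the spirit of Lemma~\ref{lem:K=K}), and check that the normalizing prefactors match: the $1/(-a_0 x)$ from \eqref{eq:P/P=cont} combined with the leftover $x^m/x^{m+1} = x^{-1}$ should reproduce the constant $q^{2\nu+1}/(1-q^{\nu+1})$ after substituting the value of $a_0$ and simplifying $(1-q^\nu)$ against a matching factor.

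**Main obstacle.** The routine algebra is bookkeeping the three normalizations simultaneously — the $x$-power $x^m/x^{m+1}$, the reciprocal-argument substitution $x \mapsto x^{-1}$, and the prefactor $1/(-a_0x)$ in \eqref{eq:P/P=cont} — and reconciling them with the prefactor $q^{2\nu+1}x$ in the numerator of the specialized Corollary~\ref{cor:XYq} continued fraction. The genuinely delicate point, and where I expect to spend the most care, is verifying that $\delta b = \{\hat b_{n+1}(q^{\nu+1})\}$ really equals $\{\hat b_n(q^{\nu+2})\}$ and likewise for $a$; this is where the two different parameters $q^{\nu+1}$ (in the denominator polynomial) and $q^{\nu+2}$ (in the numerator polynomial) in the statement come from, and getting the shift exactly right — including the edge term $a_0$ versus $a_1$ in \eqref{eq:P/P=cont} — is the crux. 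Once the dictionary between the continued-fraction data and the $R_I$ recurrence data is pinned down, the identity follows by matching \eqref{eq:P/P=cont} termwise against the specialized Corollary~\ref{cor:XYq}, and taking $m \to \infty$ together with \eqref{eq:r=J} recovers Theorem~\ref{thm:DF-BM} as the stated limit.
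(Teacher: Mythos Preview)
Your proposal is correct and follows essentially the same route as the paper's proof: specialize Corollary~\ref{cor:XYq} at $X=q^\nu x$, $Y=q^\nu$, invoke \eqref{eq:P/P=cont} for the $r^{(3)}$ polynomials with $c=q^{\nu+1}$ (using that the shift $\delta$ on the coefficient sequences is exactly $c\mapsto cq$), and then apply the equivalence transformation of Lemma~\ref{lem:K=K} with $c_i=1-q^{\nu+i+1}$ to match the two continued fractions and clear the $(1-q^\nu)$ factor in $a_0$. Your hesitations about the bookkeeping are well placed but all resolve as you anticipate; in particular the $x^m/x^{m+1}$ is already built into the left side of \eqref{eq:P/P=cont}, so there is no extra $x^{-1}$ to track separately.
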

\begin{proof}
  Let $b=\{b_i\}_{i\ge0}$, $a=\{a_i\}_{i\ge0}$, and
  $\lambda=\{\lambda_i\}_{i\ge0}$, where
  \[
    b_i=\frac{q^{\nu+i+1}}{1-q^{\nu+i+1}},\qquad a_i
    =\frac{q^{2\nu+2i+1}}{(1-q^{\nu+i})(1-q^{\nu+i+1})} ,\qquad \lambda_i =0.
  \]
  Then \( P_m(x;b,a,\lambda) = r^{(3)}_{m}(x;q^{\nu+1},q) \) and 
  \(  P_m(x;\delta b,\delta a,\delta \lambda) = r^{(3)}_{m}(x;q^{\nu+2},q) \).
By \eqref{eq:P/P=cont},
\[
  \frac{r^{(3)}_{m}(x^{-1};q^{\nu+2},q)}{xr^{(3)}_{m+1}(x^{-1};q^{\nu+1},q)}
  = \frac{x^mP_{m}(x^{-1};\delta b,\delta a,\delta \lambda)}{x^{m+1}P_{m+1}(x^{-1};b,a,\lambda)}
  = \frac{1}{-a_0x}\KK_{i=0}^m \left( \frac{-a_ix}{1-b_ix} \right).
\]
By Lemma~\ref{lem:K=K} with \( c_i=1-q^{\nu+i+1} \),
\[
  \frac{1}{-a_0x}\KK_{i=0}^m \left( \frac{-a_ix}{1-b_ix} \right)
  =\frac{1}{-a_0x} \frac{1}{c_{-1}}\KK_{i=0}^m \left( \frac{-a_ic_{i-1}c_{i}x}{c_i-b_ic_ix} \right)
  =  \frac{1-q^{\nu+1}}{-q^{2\nu+1}x} \KK_{i=0}^m \left( \frac{-q^{2\nu+2i+1}x}
    {1-q^{\nu+i+1}-q^{\nu+i+1}x} \right).
\]
Letting \( X=q^\nu x \) and \( Y=q^\nu  \), and combining the above equations, we obtain
\[
\frac{q^{2\nu+1}}{1-q^{\nu+1}} \cdot \frac{r^{(3)}_{m}(x^{-1};q^{\nu+2},q)}{r^{(3)}_{m+1}(x^{-1};q^{\nu+1},q)}
=  - \KK_{i=0}^m \left( \frac{-q^{2i+1}XY} {1-q^{i+1}(X+Y)} \right).
\]
Corollary~\ref{cor:XYq} then completes the proof.
\end{proof}

By \eqref{eq:r=J}, taking the limit \( m\to\infty \) in Theorem~\ref{thm:ratio of R-1} 
we obtain Theorem~\ref{thm:DF-BM}.
We may also use Theorem~\ref{thm:RIexplicit} to write the finite 
continued fraction as an explicit rational function.

\begin{cor}\label{cor:double sum ratio of R-1}
  The tri-variate generating function for bounded diagonal
  parallelogram polyominoes is
\[
  \sum_{\alpha\in \PP^{\le n+1}} x^{\col(\alpha)} y^{\row(\alpha)}
  q^{\area(\alpha)}
  =-\frac{x\sum_{k=0}^n \sum_{a=0}^{n-k} (-1)^k x^{a} y^{-k-a}
  q^{-\binom{k}{2}-2k}\Qbinom{k+a}{a}{q^{-1}}\Qbinom{n-a}{k}{q^{-1}}}
{\sum_{k=0}^{n+1} \sum_{a=0}^{n+1-k} (-1)^k x^{a} y^{-k-a}
q^{-\binom{k}{2}-k}\Qbinom{k+a}{a}{q^{-1}}\Qbinom{n+1-a}{k}{q^{-1}}}.
\]
\end{cor}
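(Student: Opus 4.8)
The plan is to feed the explicit double sum of Theorem~\ref{thm:RIexplicit} into the ratio of type $R_I$ $q$-Lommel polynomials produced by Theorem~\ref{thm:ratio of R-1}, and to check that all the spurious prefactors cancel. \textbf{Step 1 (pass to the $x,y$ variables).} Theorem~\ref{thm:ratio of R-1} is an identity of rational functions in $q^\nu$, $x$, $q$; renaming $q^\nu$ as $y$ and the variable $x$ of that theorem as $x/y$ turns the left-hand weight $(q^\nu x)^{\col(\alpha)}(q^\nu)^{\row(\alpha)}q^{\area(\alpha)}$ into $x^{\col(\alpha)}y^{\row(\alpha)}q^{\area(\alpha)}$ and replaces $x^{-1}$ by $y/x$, so with $m=n$ it reads
\[
  \sum_{\alpha\in\PP^{\le n+1}} x^{\col(\alpha)}y^{\row(\alpha)}q^{\area(\alpha)}
  =\frac{qy^2}{1-qy}\cdot\frac{r^{(3)}_{n}(y/x;q^2y,q)}{r^{(3)}_{n+1}(y/x;qy,q)}.
\]

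\textbf{Step 2 (explicit formula for $r^{(3)}_n$).} Using the rescaling $r^{(3)}_{n}(z;c,q)=c^n r_{n}(z/c;c,q)$ recorded just after \eqref{eq:r3} and substituting Theorem~\ref{thm:RIexplicit} with $x$ replaced by $z/c$, all powers of $c$ collect to $c^{-k}$ and one obtains
\[
  r^{(3)}_{n}(z;c,q)=\frac{1}{(c^{-1};q^{-1})_n}\sum_{k=0}^{n}\sum_{a=0}^{n-k}
  (-1)^k z^{\,n-a}c^{-k}q^{-\binom{k}{2}}\Qbinom{k+a}{a}{q^{-1}}\Qbinom{n-a}{k}{q^{-1}}.
\]

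\textbf{Step 3 (substitute and cancel).} Putting $z=y/x$ with $c=q^2y$ in the numerator and $c=qy$ in the denominator, I split $z^{\,n-a}=x^{a-n}y^{n-a}$, factor $x^{-n}y^{n}$ out of $r^{(3)}_{n}(y/x;q^2y,q)$ and $x^{-n-1}y^{n+1}$ out of $r^{(3)}_{n+1}(y/x;qy,q)$, and observe that $c^{-k}$ contributes $q^{-2k}y^{-k}$ in the first case and $q^{-k}y^{-k}$ in the second. What remains of the two sums is exactly
\[
  N_n=\sum_{k=0}^{n}\sum_{a=0}^{n-k}(-1)^kx^ay^{-k-a}q^{-\binom{k}{2}-2k}\Qbinom{k+a}{a}{q^{-1}}\Qbinom{n-a}{k}{q^{-1}},
\]
\[
  D_{n+1}=\sum_{k=0}^{n+1}\sum_{a=0}^{n+1-k}(-1)^kx^ay^{-k-a}q^{-\binom{k}{2}-k}\Qbinom{k+a}{a}{q^{-1}}\Qbinom{n+1-a}{k}{q^{-1}},
\]
i.e. $r^{(3)}_{n}(y/x;q^2y,q)=\dfrac{x^{-n}y^{n}}{(q^{-2}y^{-1};q^{-1})_n}N_n$ and $r^{(3)}_{n+1}(y/x;qy,q)=\dfrac{x^{-n-1}y^{n+1}}{(q^{-1}y^{-1};q^{-1})_{n+1}}D_{n+1}$. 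The monomial prefactor ratio is $x^{-n}y^{n}/(x^{-n-1}y^{n+1})=x/y$, while the Pochhammer ratio telescopes, $(q^{-1}y^{-1};q^{-1})_{n+1}=(1-q^{-1}y^{-1})(q^{-2}y^{-1};q^{-1})_{n}$, so $(q^{-1}y^{-1};q^{-1})_{n+1}/(q^{-2}y^{-1};q^{-1})_{n}=1-q^{-1}y^{-1}=-(1-qy)/(qy)$. Multiplying the quotient $\dfrac{x}{y}\cdot\Bigl(-\dfrac{1-qy}{qy}\Bigr)\cdot\dfrac{N_n}{D_{n+1}}$ by the outer factor $qy^2/(1-qy)$ of Step~1, the factors $1-qy$, $q$ and $y$ all cancel and the left-hand side equals $-x\,N_n/D_{n+1}$, which is the claimed identity.

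The argument is purely computational. The only place where care is needed is the bookkeeping of the powers of $c$, $x$, $y$ and $q$ when $z=y/x$ is inserted into the formula of Step~2, together with the telescoping of the $q^{-1}$-shifted Pochhammer symbols; there is no genuine obstacle beyond this.
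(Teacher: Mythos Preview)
Your proof is correct and follows exactly the route the paper indicates: plug the explicit double sum for $r_n$ from Theorem~\ref{thm:RIexplicit} (after rescaling to $r^{(3)}_n$) into the ratio of Theorem~\ref{thm:ratio of R-1}, and simplify. The paper itself gives no details beyond that one-line pointer, so your Step~1--3 bookkeeping is precisely what is needed to fill in the corollary.
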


Cigler and Krattenthaler \cite{Kratt_Hankel1} found
a different finite version of Theorem~\ref{thm:DF-BM}.

\begin{thm}\cite[Corollary 55]{Kratt_Hankel1}
  For any integer $m\ge 1$, we have
  \[
    \sum_{\alpha\in \PP_1^{\le k}} x^{\col(\alpha)} y^{\row(\alpha)}
    q^{\area(\alpha)}=
    -\frac{ y \sum_{j=1}^{k}(-1)^{j} x^{ j} q^{\binom{j+1}{2}}
      \sum_{i=0}^{k-j}(y q)^{i}\qbinom{k-i-1}{j-1} \qbinom{i+j-1}{j-1} }
    { \sum_{j=0}^{k}(-1)^{j} x^{j} q^{\binom{j+1}{2}}
      \sum_{i=0}^{k-j}(y q)^{i}\qbinom{k-i}{j} \qbinom{i+j-1}{j-1} },
  \] 
  where \( \PP_1^{\le k} \) is the set of
  parallelogram polyominoes such that each column has length at most \( k \).
\end{thm}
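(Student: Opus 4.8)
The plan is to obtain a finite continued fraction for the left-hand side by decomposing a parallelogram polyomino of bounded column height column-by-column, and then to read off its convergent — via the $q$-binomial theorem — as the ratio of the two double sums on the right. The structure parallels the proof of Theorem~\ref{thm:ratio of R-1}, except that bounding the column length (rather than the diagonal size) produces a continuant whose coefficients depend on $q^\ell$.

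First I would record the column data of $\alpha\in\PP_1^{\le k}$: its successive columns have lengths $\ell_1,\dots,\ell_a\in\{1,\dots,k\}$, and column $i$ meets column $i+1$ in $o_i\in\{1,\dots,\min(\ell_i,\ell_{i+1})\}$ rows. Since $\col(\alpha)=a$, $\area(\alpha)=\ell_1+\dots+\ell_a$, and $\row(\alpha)=\ell_a+\sum_{i<a}(\ell_i-o_i)$, summing over all such data gives
\[
  \sum_{\alpha\in\PP_1^{\le k}}x^{\col(\alpha)}y^{\row(\alpha)}q^{\area(\alpha)}
  =\mathbf u^{\top}(I-WD)^{-1}\mathbf v,
\]
where $D=\operatorname{diag}(xq^{\ell})_{\ell=1}^{k}$, $\mathbf u=D\mathbf 1$, $\mathbf v=(y^{\ell})_{\ell=1}^{k}$, and $W$ is the $k\times k$ matrix with $W_{\ell,\ell'}=\sum_{o=1}^{\min(\ell,\ell')}y^{\ell-o}$. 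The crucial point is that $W$ is invertible and $W^{-1}$ is the tridiagonal matrix $T$ with diagonal $(1+y,\dots,1+y,1)$, superdiagonal entries $-1$, and subdiagonal entries $-y$; moreover $\mathbf 1^{\top}T=\mathbf e_1^{\top}$ and $T\mathbf v=y\,\mathbf e_1$. Both facts are immediate from the explicit shape of $W$, and combined with $(I-WD)^{-1}=(W^{-1}-D)^{-1}W^{-1}$ they collapse the generating function to
\[
  \sum_{\alpha\in\PP_1^{\le k}}x^{\col(\alpha)}y^{\row(\alpha)}q^{\area(\alpha)}
  =xy\sum_{\ell=1}^{k}q^{\ell}(A^{-1})_{\ell,1}
  =\frac{x\sum_{\ell=1}^{k}(qy)^{\ell}\,\Phi_{\ell+1}}{\Delta_{k}},
\]
where $A=T-D$ is tridiagonal with diagonal $(1+y-xq,\dots,1+y-xq^{k-1},1-xq^{k})$, superdiagonal $-1$, subdiagonal $-y$, $\Delta_k=\det A$, and $\Phi_{\ell+1}$ is the determinant of the trailing block of $A$ on rows and columns $\ell+1,\dots,k$; here one uses the standard formula for the entries of the inverse of a tridiagonal matrix, and all powers of $-1$ and $-y$ cancel.

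It then remains to identify $\Delta_k$ with the stated denominator and $x\sum_{\ell}(qy)^{\ell}\Phi_{\ell+1}$ with minus the stated numerator. I would expand each tridiagonal determinant by the monomer–domino (continuant) rule, splitting each interior diagonal entry $1+y-xq^{\ell}$ as one of $1$, $y$, $-xq^{\ell}$, the last entry as $1$ or $-xq^{k}$, and weighting each domino by $-y$. A short check shows that a maximal gap of $g$ cells not containing the last cell contributes $1+y+\dots+y^{g}$, while a gap containing the last cell contributes exactly $1$; grouping by the positions of the $-xq^{\ell}$ choices then gives
\[
  \Delta_k=\sum_{j\ge0}(-x)^{j}\sum_{\substack{d_1,\dots,d_j\ge1\\ d_1+\dots+d_j\le k}}q^{\sum_{l=1}^{j}(j+1-l)d_l}\prod_{l=1}^{j}\frac{1-y^{d_l}}{1-y},
\]
while $x\sum_{\ell}(qy)^{\ell}\Phi_{\ell+1}$ has the same shape with $\prod_{l}\tfrac{1-y^{d_l}}{1-y}$ replaced by $-\,y^{d_1}\prod_{l\ge2}\tfrac{1-y^{d_l}}{1-y}$. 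Writing $d_l=1+e_l$ factors out $q^{\binom{j+1}{2}}$, and then the $q$-binomial theorem in the form $\prod_{m=1}^{j}(1-q^{m}t)^{-1}=\sum_{r\ge0}\qbinom{r+j-1}{j-1}q^{r}t^{r}$, together with the partial-sum evaluation $\sum_{r=0}^{M}\qbinom{r+p-1}{p-1}q^{r}=\qbinom{M+p}{p}$, collapses the constrained sums over $e_1,\dots,e_j$ into precisely $\sum_{i}(yq)^{i}\qbinom{k-i}{j}\qbinom{i+j-1}{j-1}$ and $\sum_{i}(yq)^{i}\qbinom{k-i-1}{j-1}\qbinom{i+j-1}{j-1}$, respectively.

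The main obstacle is this last $q$-series bookkeeping: carrying the two nested partial $q$-binomial sums through without error, and seeing how the special role of the final column converts $\qbinom{k-i}{j}$ into $\qbinom{k-i-1}{j-1}$ in the numerator. Everything preceding it is routine linear algebra and the combinatorics of continuants; as a sanity check, letting $k\to\infty$ returns $\qhyper11{0}{qy}{q,qx}$ in the denominator and recovers the formula of Theorem~\ref{thm:DF-BM}, as it must. (An alternative to the $q$-series step is to verify that both sides satisfy the three-term recurrence in $k$ coming from expanding $\Delta_k$ along its last row, with matching initial data, but the resummation above seems cleanest.)
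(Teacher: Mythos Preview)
The paper does not prove this theorem: it is quoted verbatim as \cite[Corollary~55]{Kratt_Hankel1} and attributed to Cigler and Krattenthaler, with no argument given.  So there is nothing to compare against on the paper's side; your proposal is an independent proof.

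Your approach is sound and, once the final bookkeeping is carried out, complete.  The transfer-matrix setup is correct (your formula $\row(\alpha)=\ell_a+\sum_{i<a}(\ell_i-o_i)$ is exactly what one gets from the staircase description of a parallelogram polyomino), and the key observation that $W^{-1}$ is the tridiagonal matrix $T$ with $T\mathbf v=y\,\mathbf e_1$ is easily checked.  The tridiagonal-inverse formula then gives $(A^{-1})_{\ell,1}=y^{\ell-1}\Phi_{\ell+1}/\Delta_k$, so the powers of $-1$ cancel but a factor $y^{\ell-1}$ survives and supplies the $(qy)^\ell$ in the numerator; you may want to phrase that sentence more carefully.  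The continuant expansion and the evaluation of the gap weights ($1+y+\cdots+y^{g}$ for interior gaps, $1$ for the final gap) are both correct.

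The step you flag as the obstacle is in fact routine.  After writing $d_l=1+e_l$ and expanding each factor $\tfrac{1-y^{d_l}}{1-y}=\sum_{f_l=0}^{e_l}y^{f_l}$, set $g_l=e_l-f_l\ge0$; then the $q$-weight factorises over the $f$'s and the $g$'s separately, and your identity $\sum_{\sum e=M}q^{\sum_l(j+1-l)e_l}=q^{M}\qbinom{M+j-1}{j-1}$ together with the partial-sum formula $\sum_{M=0}^{N}q^{M}\qbinom{M+j-1}{j-1}=\qbinom{N+j}{j}$ immediately collapses the sum to $\sum_i(yq)^i\qbinom{i+j-1}{j-1}\qbinom{k-i}{j}$.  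In the numerator the only change is that $f_1=e_1$ is forced and $g_1=0$, so the inner $g$-sum runs over $j-1$ parts and produces $\qbinom{k-i-1}{j-1}$ instead, with an extra overall factor of $y$.  This matches the stated right-hand side exactly.

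In short: the paper offers no proof to compare with, and your argument --- transfer matrix, tridiagonal inverse, continuant expansion, then two applications of the $q$-binomial partial-sum identity --- is correct.
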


\begin{remark}
  The second odd-even trick \eqref{eq:mu2n+2} with \( \lambda_{2k-1}=q^ky \)
  and \( \lambda_{2k}=q^k \) gives 
  \begin{equation}
    \label{eq:BM2}
    1+\cfrac{qy}{
      1-q(x+y) -\cfrac{q^3xy}{
        1-q^2(x+y)- \cfrac{q^5xy}{\cdots}}}=
    \cfrac{1}{
      1-\cfrac{qy}{
        1-\cfrac{qx}{
          1-\cfrac{q^2y}{
            1-\cfrac{q^2x}{
              {\cdots}}}}}}.
  \end{equation}
\end{remark}

\begin{remark}

  There are also finite versions of Theorem~\ref{thm:ratio of R-1} for the
  classical \( q \)-Lommel polynomials and the even-odd $q$-Lommel polynomials.
  The rational function is again a quotient of orthogonal polynomials while the
  weights on \( \PP^{\le m+1} \) depend upon the diagonals.

  Here are the infinite continued fractions for these two cases. For the
  classical $q$-Lommel polynomials, Theorem~\ref{thm:firstmom} becomes
\begin{equation}
\label{eq:classxy}
\frac{\ _2\phi_1(0,0;q^2y;q;-qx)}{\ _2\phi_1(0,0;qy;q;-qx)}=
\cfrac{1-qy}{
  1-qy -\cfrac{q^2xy}{
  1-q^2y- \cfrac{q^3xy}{
 1-q^3y-\cfrac{q^4xy} {\cdots}}}}.
\end{equation}
For the even-odd $q$-Lommel polynomials, Theorem~\ref{thm:secondmom} becomes
\begin{equation}
\label{eq:evenoddxy}
\frac{\ _1\phi_1(0;q^2y;q;q^2x)}{\ _1\phi_1(0;qy;q;qx)}=
\cfrac{1-qy}{
  1-qy -\cfrac{A_1}{
  1-q^2y- \cfrac{A_2}{
 1-q^3y-\cfrac{A_3} {\cdots}}}}
\end{equation}
where $A_{2k-1}=xq^k$ and $A_{2k}=xyq^{3k/2+1}.$

\end{remark}

\section{Concurrence of moments}
\label{sec:concurrence-moments}

Recall the notation for the moments
$\mu_{n} \left( \{b_k\}_{k\ge0}, \{\lambda_k\}_{k\ge0} \right)$ and
$\mu_{n} \left( \{b_k\}_{k\ge0}, \{a_k\}_{k\ge0}, \{\lambda_k\}_{k\ge0} \right)$
 in Section~\ref{sec:moments-cont-fract}.
There is a concurrence of moments (see Propositions~\ref{prop:firstprop} and
\ref{prop:secondprop}),
which we call the first and second \emph{odd-even tricks}
\begin{align}
  \label{eq:mu2n}
  \mu_{2n} \left( \{0\}, \{\lambda_k\} \right)
  &=\mu_{n} \left( \{\lambda_{2k}+\lambda_{2k+1}\} , \{\lambda_{2k}\lambda_{2k-1}\} \right),\\
  \label{eq:mu2n+2}
  \mu_{2n+2} \left( \{0\}, \{\lambda_k\} \right)
  &=\lambda_1\mu_{n} \left( \{\lambda_{2k+2}+\lambda_{2k+1}\} , \{\lambda_{2k}\lambda_{2k+1}\} \right).
\end{align}

The classical orthogonal polynomial moments
are a special case of type \( R_I \) moments
$$
\mu_n(\{b_k\},\{0\}, \{\lambda_k\})=\mu_{n} \left( \{b_k\}, \{\lambda_k\} \right).
$$
There is another concurrence of moments, which follows from \cite[Corollary 3.7]{kimstanton:R1}
\begin{equation}
  \label{eq:1}
  \mu_{2n}(\{0\},\{a_k\}) = \mu_n(\{0\},\{a_k\},\{0\}).
\end{equation}

It is known \cite{kimstanton:R1} that 
a type \( R_I \) moment $\mu_n(\{b_k\},\{a_k\}, \{\lambda_k\})$ is a nonnegative polynomial in the 
recurrence coefficients. Besides \eqref{eq:1} Theorem~\ref{thm:equalmom} is
another example of classical
orthogonal polynomial moments being equal to type $R_I$ moments 
\begin{equation}\label{eq:con}
  \mu_{2n} \left( \{0\}, \{\Lambda_k\}  \right)
  =\mu_{n} \left( \{b_k\}, \{a_k\}, \{0\}  \right).
\end{equation}

The main result in this section is Theorem~\ref{thm:concurmom}, which expresses
the $\Lambda_k$ as a function of the sequences $a_k$ and $b_k$, thereby providing
the concurrence \eqref{eq:con}.

To prove Theorem~\ref{thm:concurmom} we need to recall a classical result and
notation. The Hankel determinant \cite[Theorem~4.2]{Chihara} will be used:
\[
 \det(\mu_{i+j} \left( \{b_k\}_{k\ge0}, \{\lambda_k\}_{k\ge0} \right) )_{i,j=0}^n
  = \lambda_1^n \lambda_2^{n-1} \cdots \lambda_n^1.
\]
Recall that for a sequence \( a=\{a_k\}_{k\ge0} \) we write
\( \delta a=\{a_{k+1}\}_{k\ge0} \). We also define
\( \delta^{-1} a=\{a_{k-1}\}_{k\ge0} \), where \( a_{-1}=1 \) (the value of \(
a_{-1} \) is irrelevant for our purpose).

\begin{defn}
  A \emph{Schr\"oder path} is a lattice path from \( (r,0) \) to \( (s,0) \), for
  some integers \( r,s \), consisting of northeast steps \( (1,1) \), east steps
  \( (1,0) \), and south steps \( (0,-1) \) that never goes below the \( x
  \)-axis. Given sequences \( b=\{b_k\}_{k\ge0} \) and \( a=\{a_k\}_{k\ge0} \),
  the \emph{weight} \( \wt(P) \) of a Schr\"oder path \( P \) is the product of \( b_i \) for
  each east step starting at height \( i \) and \( a_i \) for each south step
  starting at height \( i \).
\end{defn}

Our main theorem of this section is the next theorem.

\begin{thm}
\label{thm:concurmom}
Suppose that sequences \( b=\{b_k\}_{k\ge0} \), \( a=\{a_k\}_{k\ge0} \), and \( \Lambda=\{\Lambda_k\}_{k\ge0} \) satisfy
\[
  \mu_{2n} \left( \{0\}, \{\Lambda_k\}  \right)
  =\mu_{n} \left( \{b_k\}, \{a_k\}, \{0\}  \right).
\]
Then
\[
  \Lambda_1\Lambda_2\cdots \Lambda_{2n} = \frac{f_n(a,b)}{f_{n-1}(a,b)},
\]
where 
\[
f_{n}(a,b) = \sum_{p} \wt(p),
\]
and the sum is over all $n$-tuples $p=(P_0,P_1,\dots,P_n)$ of non-intersecting
Schr\"oder paths, $P_k:(-k,0)\to (k,0), 0\le k\le n.$
Moreover,
\[
  \Lambda_1\Lambda_2\cdots \Lambda_{2n-1} =  a_0^{-1}
  \frac{f_{n}(\delta^{-1}a,\delta^{-1} b)}{f_{n-1}(\delta^{-1}a,\delta^{-1} b)},
\]
and if \( a_k=b_k=1 \) then
\[
 f_{n}(\{1\},\{1\}) = 2^{\binom{n+1}{2}}.
\]
\end{thm}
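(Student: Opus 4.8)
The plan is to turn each product $\Lambda_1\cdots\Lambda_{m}$ into a ratio of Hankel determinants, factor those determinants using the symmetry forced by $b_k\equiv0$, and then read the resulting determinants off combinatorially from the Schr\"oder-path model of the type $R_I$ moments. Set $\nu_n=\mu_n(\{b_k\},\{a_k\},\{0\})$ and $D_N=\det\bigl(\mu_{i+j}(\{0\},\{\Lambda_k\})\bigr)_{i,j=0}^N$. Since the $\Lambda$-polynomials have vanishing $b$-coefficients, their linear functional is symmetric, so $\mu_{2k}(\{0\},\{\Lambda_k\})=\nu_k$ and $\mu_{2k+1}(\{0\},\{\Lambda_k\})=0$. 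The Hankel formula \cite[Theorem~4.2]{Chihara} gives $D_N=\prod_{k=1}^N\Lambda_k^{N+1-k}$, hence $\Lambda_1\cdots\Lambda_N=D_N/D_{N-1}$; on the other hand, permuting rows and columns of $D_N$ so that even indices precede odd ones makes the matrix block diagonal (the entry $\mu_{i+j}$ vanishes when $i\not\equiv j\pmod 2$), so that
\[
  D_{2n}=E_nF_{n-1},\qquad D_{2n-1}=E_{n-1}F_{n-1},\qquad D_{2n-2}=E_{n-1}F_{n-2},
\]
where $E_m=\det(\nu_{i+j})_{i,j=0}^m$ and $F_m=\det(\nu_{i+j+1})_{i,j=0}^m$. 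Taking ratios yields $\Lambda_1\cdots\Lambda_{2n}=E_n/E_{n-1}$ and $\Lambda_1\cdots\Lambda_{2n-1}=F_{n-1}/F_{n-2}$, so everything reduces to identifying $E_m$ and $F_m$ with path sums.

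Next I would recall that, by Flajolet's theory \cite{Flajolet1980} applied to the type $R_I$ continued fraction \eqref{eq:typeIcontfrac} with $\lambda_k=0$ (see also \cite{kimstanton:R1}), $\nu_n=\sum_{P}\wt(P)$ summed over Schr\"oder paths $P\colon(0,0)\to(n,0)$; since $\wt$ depends only on the heights of the steps, translation gives $\nu_{i+j}=\sum_{P\colon(-i,0)\to(j,0)}\wt(P)$. Applying the Lindstr\"om--Gessel--Viennot lemma to $E_m=\det(\nu_{i+j})_{i,j=0}^m$ with sources $(-i,0)$ and sinks $(j,0)$, and using that the paths are $x$-monotone while all sources lie weakly to the left of all sinks, the only permutation admitting a vertex-disjoint family is the identity; hence $E_m=f_m(a,b)$, which is the first formula $\Lambda_1\cdots\Lambda_{2n}=f_n(a,b)/f_{n-1}(a,b)$.

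For the second formula, the same LGV argument with sinks $(j+1,0)$ expresses $F_m$ as a sum over vertex-disjoint families $(Q_0,\dots,Q_m)$ with $Q_i\colon(-i,0)\to(i+1,0)$. In such a family every integer point of the $x$-axis in $[-m,m+1]$ is an endpoint of some $Q_i$, so no $Q_i$ may touch the axis except at its own endpoints; consequently each $Q_i$ begins with a northeast step, ends with a south step, and stays at height $\ge1$ in between. Deleting those two boundary steps and lowering the rest by one unit is a weight-preserving bijection onto vertex-disjoint families $(P_0,P_1,\dots,P_{m+1})$ with $P_k\colon(-k,0)\to(k,0)$, where $P_0$ is empty and $P_k$ is the image of $Q_{k-1}$: the unit drop turns $(a,b)$ into $(\delta^{-1}a,\delta^{-1}b)$, and each deleted south step---one per path $Q_i$---contributes a factor $a_0$. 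Thus $f_{m+1}(\delta^{-1}a,\delta^{-1}b)=a_0^{\,m+1}F_m$, and $\Lambda_1\cdots\Lambda_{2n-1}=F_{n-1}/F_{n-2}=a_0^{-1}f_n(\delta^{-1}a,\delta^{-1}b)/f_{n-1}(\delta^{-1}a,\delta^{-1}b)$. Finally, when $a_k=b_k=1$ the quantity $\nu_n$ counts all Schr\"oder paths $(0,0)\to(n,0)$, i.e.\ $\nu_n$ is the large Schr\"oder number $S_n$, so $f_n(\{1\},\{1\})=\det(S_{i+j})_{i,j=0}^n$; the Jacobi continued fraction of $\sum_n S_n t^n$ has all $\lambda_k=2$, so the Hankel formula gives $\det(S_{i+j})_{i,j=0}^n=\prod_{k=1}^n2^{\,n+1-k}=2^{\binom{n+1}{2}}$.

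The step I expect to be delicate is the bijection underlying $F_m=a_0^{-(m+1)}f_{m+1}(\delta^{-1}a,\delta^{-1}b)$: verifying that non-intersection is preserved in both directions and that the height shift together with the two removed boundary steps produce exactly the operator $\delta^{-1}$ and, after taking the ratio, the single factor $a_0^{-1}$. The Hankel factorization, the ``only the identity permutation survives'' claim in LGV, and the Schr\"oder-number Hankel evaluation are all routine or classical.
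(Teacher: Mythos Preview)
Your approach mirrors the paper's almost exactly: reduce to ratios of Hankel determinants of the type $R_I$ moments, evaluate those determinants by Lindstr\"om--Gessel--Viennot on Schr\"oder paths, and relate the shifted determinant $F_m$ to $f_{m+1}$ by a one-step ``boundary layer'' bijection. The paper reaches $E_n/E_{n-1}$ and $F_{n-1}/F_{n-2}$ via the two odd--even tricks \eqref{eq:mu2n}, \eqref{eq:mu2n+2} rather than by block-diagonalizing the big Hankel $D_N$, and it obtains $2^{\binom{n+1}{2}}$ by citing \cite[Theorem~6.15]{kimstanton:R1} rather than by your Schr\"oder-number Jacobi argument; these are cosmetic differences.

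The one genuine slip is exactly the step you flagged. In a non-intersecting family $(Q_0,\dots,Q_m)$ the path $Q_0\colon(0,0)\to(1,0)$ may well be the single east step --- its only axis contacts are its own endpoints --- so the claim ``each $Q_i$ begins with a northeast step and ends with a south step'' fails for $i=0$, and your map ``delete the first NE and last S of $Q_i$, then lower by one'' is undefined there. Worse, even when $Q_{k-1}$ does start NE and end S, deleting those steps and lowering gives a path from $(-k+2,0)$ to $(k,0)$, not from $(-k,0)$ to $(k,0)$; the map as written cannot land in the $(P_0,\dots,P_{m+1})$ configuration. The fix is to run the bijection in the opposite direction (this is what the paper does): to each $Q_i$ \emph{prepend} an NE step and \emph{append} an S step, then raise by one unit, obtaining $P_{i+1}\colon(-(i+1),0)\to(i+1,0)$, and declare $P_0$ to be the empty path at the origin. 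This is well defined regardless of the shape of $Q_0$, and it is onto all non-intersecting $(P_0,\dots,P_{m+1})$ because in any such family $P_0$ is forced to be empty and every $P_k$ with $k\ge1$ must start with NE and end with S (the point $(-k+1,0)$ being occupied by $P_{k-1}$). Your weight bookkeeping --- one factor $a_0$ per added south step, and the height shift replacing $(a,b)$ by $(\delta^{-1}a,\delta^{-1}b)$ --- is correct and yields $f_{m+1}(\delta^{-1}a,\delta^{-1}b)=a_0^{\,m+1}F_m$ as you stated.
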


\begin{proof}
Let
\begin{align*}
  \rho_n&:= \mu_{2n} \left( \{0\}, \{\Lambda_k\} \right)
          =\mu_{n} \left( \{b_k\}, \{a_k\}, \{0\} \right),\\
  \Delta_n&:=  \det(\rho_{i+j})_{0\le i,j\le n}.
\end{align*}
Using the odd-even trick $B_n=\Lambda_{2n+1}+\Lambda_{2n}$ and $\Theta_n =
\Lambda_{2n-1} \Lambda_{2n}$, we have
\[
  \rho_n = \mu_{2n} \left( \{0\}, \{\Lambda_k\} \right)= \mu_n(\{B_k\}, \{\Theta_k\}).
\]
Therefore
\[
  \Delta_n=  \det(\mu_{i+j}(\{B_k\}, \{\Theta_k\}))_{0\le i,j\le n} 
= \Theta_1^n \Theta_2^{n-1} \cdots \Theta_n^1 =  \Lambda_1^{n} \Lambda_2^{n}
  \Lambda_3^{n-1}\Lambda_4^{n-1}\cdots \Lambda_{2n-1}^{1}\Lambda_{2n}^1,
\]
which shows
$\Lambda_1\Lambda_2\cdots \Lambda_{2n} = \Delta_{n}/\Delta_{n-1}.$ 

Kim and Stanton \cite[Corollary~3.7]{kimstanton:R1} showed that \( \mu_{n} \left( \{b_k\},
  \{a_k\}, \{0\} \right) \) is the sum of weights of all Schr\"oder paths from \(
(0,0) \) to \( (n,0) \).
Since \( \Delta_n = \det(\mu_{i+j} \left( \{b_k\}, \{a_k\}, \{0\} \right))_{0\le
  i,j\le n} \), the $(n+1)\times (n+1)$ determinant $\Delta_n$ is the signed
generating function for $(n+1)$-tuples of Schr\"oder paths $(P_0,\ldots, P_n)$,
$P_k:(-k,0)\to (\sigma(k),0)$, for some permutation $\sigma$ of
$\{0,1,\ldots,n\}.$ Because there are no SE edges ($\lambda_k=0$), any two paths
which intersect do so at integer coordinates. Thus we may apply the
Lindstr\"om--Gessel--Viennot lemma of tail swapping to reduce this sum to
non-intersecting paths, $\sigma=$ identity, $P_k:(-k,0)\to (k,0).$ Thus \(
\Delta_n=f_n(a,b) \) and we obtain the identity for $\Lambda_1\Lambda_2\cdots
\Lambda_{2n}.$

Now using the second odd-even trick $B'_n=\Lambda_{2n+2}+\Lambda_{2n+1}$ and
 $\Lambda'_n = \Lambda_{2n+1} \Lambda_{2n}$, we have
$$
\rho_{n+1} = \mu_{2n+2} \left( \{0\}, \{\Lambda_k\} \right) = \Lambda_1\mu_{n}(\{B'_k\}, \{\Lambda'_k\}).
$$
Then
\begin{align*}
\Delta_n':=\det(\rho_{i+j+1})_{0\le i,j\le n-1}
  &= \Lambda_1^n \det(\mu_{i+j}(\{B'_k\}, \{\Lambda'_k\}))_{0\le i,j\le n-1}\\
&=  \Lambda_1^n \Lambda_2^{n-1}\Lambda_3^{n-1}\cdots \Lambda_{2n-2}^1\Lambda_{2n-1}^1,
\end{align*}
so 
$$
\Lambda_1\Lambda_2\cdots \Lambda_{2n-1} = \Delta_n'/\Delta_{n-1}'.
$$
As in the even case, \( \Delta'_n = \det(\mu_{i+j+1} \left( \{b_k\}, \{a_k\},
  \{0\} \right))_{0\le i,j\le n-1} \) is the generating function for
$n$-tuples non-intersecting Schr\"oder paths $p'=(P'_1,\dots, P'_n)$,
$P'_k:(-k+1,0)\to (k,0).$ For \( 1\le k\le n \), let \( P_k \) be the path from
\( (-k,-1) \) to \( (k,-1) \) obtained from \( P_k' \) by adding a northeast
step at the beginning and a south step at the end, and let \( P_0 \) be the
empty path from \( (0,-1) \) to \( (0,-1) \). This gives a bijection from
$n$-tuples non-intersecting Schr\"oder paths $p'=(P'_1,\dots, P'_n)$,
$P'_k:(-k+1,0)\to (k,0)$ to $(n+1)$-tuples non-intersecting Schr\"oder paths
$p=(P_0,P_1,\dots, P_n)$, $P_k:(-k,-1)\to (k,-1)$. Note that the starting point
of \( P_k \) has height \( -1 \), which shifts the indices of $a_k$ and $b_k$
down by one. This shows that
\[
  \Delta'_n =  a_0^{-n}\det(\mu_{i+j} \left( \{b_{k-1}\}, \{0\}, \{a_{k-1}\} \right))_{0\le
    i,j\le n}  = a_0^{-n} f_n(\delta^{-1}a, \delta^{-1}b),
\]
and we obtain the identity for \( \Lambda_1\Lambda_2\cdots \Lambda_{2n-1} \).

Finally the fact that $\Delta_n=2^{\binom{n+1}{2}}$ and 
$\Delta_n'=2^{\binom{n+1}{2}}$ if $a_k=b_k=1$ for all $k$ 
follows from \cite[Theorem 6.15, $A=B=1, C=0$]{kimstanton:R1}.
\end{proof}

The first few values of \( \Lambda_1\cdots\Lambda_k \) in Theorem~\ref{thm:concurmom} are 

\begin{align*}
  \Lambda_1
  &= a_0^{-1} \frac{f_1(\delta^{-1}a,\delta^{-1}b)}{f_{0}(\delta^{-1}a,\delta^{-1}b)}= \frac{a_1+b_0}{1},\\
  \Lambda_1\Lambda_2
  &=\frac{f_1(a,b)}{f_{0}(a,b)}= a_1 \frac{a_2+b_1}{1},\\
  \Lambda_1\Lambda_2\Lambda_3
  &= a_0^{-1} \frac{f_2(\delta^{-1}a,\delta^{-1}b)}{f_1(\delta^{-1}a,\delta^{-1}b)}\\
    &= a_1 \frac{a_1a_2a_3 + a_2^2b_0 + a_2a_3b_0 + 2a_2b_0b_1 + b_0b_1^2 + a_1a_2b_2 + a_2b_0b_2}{a_1 + b_0},\\
  \Lambda_1\Lambda_2\Lambda_3\Lambda_4
  &= \frac{f_2(a,b)}{f_1(a,b)}\\
  &= a_1a_2 \frac{a_2a_3a_4 + a_3^2b_1+ a_3a_4b_1+ 2a_3b_1b_2+  b_1b_2^2+a_2a_3b_3 + a_3b_1b_3}{a_2 + b_1}.
\end{align*}

\begin{remark}
  Eu and Fu \cite{Eu2005} used the idea relating \( \Delta_n \) and \(
  \Delta'_{n-1} \) in the proof of Theorem~\ref{thm:concurmom} to give a simple
  proof of the Aztec diamond theorem, which is equivalent to
  the result $\Delta_n=2^{\binom{n+1}{2}}$ when $a_k=b_k=1.$
\end{remark}

\section{Open problems}
\label{sec:open-problems}

Recall that Kishore's theorem is a statement about the power
series coefficients of the ratio $J_{\nu+1}(x)/J_\nu(x)$ of two Bessel
functions. 

\begin{thm}[Kishore, \cite{Kishore1964}]
  \label{thm:kishore}
  We have
  \begin{equation}
    \frac{J_{\nu+1}(z)}{J_\nu(z)}
    = \sum_{n=1}^{\infty} \frac{N_{n,\nu}}{D_{n,\nu}} \left( \frac{z}{2} \right)^{2n-1},
  \end{equation}
  where 
  \[
    D_{n,\nu} = \prod_{k=1}^n (k+\nu)^{\lfloor n/k \rfloor},
  \]
  and
  $N_{n,\nu}$ is a polynomial in $\nu$ with nonnegative integer coefficients.
\end{thm}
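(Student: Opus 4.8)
The plan is to realize the ratio $J_{\nu+1}(z)/J_\nu(z)$ as a moment generating function for the Lommel polynomials and then interpret the power-series coefficients combinatorially. Recall from the introduction that $\sigma_{2n}(\nu)$, the $(2n-2)^{\mathrm{th}}$ Lommel moment, satisfies the continued fraction
\[
  \sum_{n\ge0} L_h(x^n)t^n = \frac{{}_0F_1(c+1;-t^2)}{{}_0F_1(c;-t^2)}
  = \cfrac{1}{1-\cfrac{\lambda_1t^2}{1-\cfrac{\lambda_2t^2}{1-\cdots}}},
  \qquad \lambda_k=\frac{1}{(c+k-1)(c+k)},
\]
with $c=\nu+1$, and that $J_{\nu+1}/J_\nu$ is (up to the elementary factor $z/2$) exactly this ratio of ${}_0F_1$'s when $t=z/2$. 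So the coefficient of $(z/2)^{2n-1}$ in $J_{\nu+1}(z)/J_\nu(z)$ equals $L_h(x^{2n-2})=\mu_{2n-2}(\{0\},\{\lambda_k\})$. Using the first odd-even trick \eqref{eq:mu2n}, this equals $\mu_{n-1}(\{\lambda_{2k}+\lambda_{2k+1}\},\{\lambda_{2k-1}\lambda_{2k}\})$, but more to the point, Flajolet's theory (Lemma~\ref{lem:flajolet}) expresses each $\mu_{2n-2}(\{0\},\{\lambda_k\})$ as a sum over Motzkin paths (here Dyck paths, since all $b_k=0$) from $(0,0)$ to $(2n-2,0)$, weighted by $\prod \lambda_{h}$ over down steps at height $h$.

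The next step is to pin down the denominator $D_{n,\nu}$. Each $\lambda_k = 1/\big((c+k-1)(c+k)\big)$ with $c=\nu+1$ contributes factors $(\nu+k)$ and $(\nu+k+1)$ to the denominator. Clearing denominators in the Dyck-path sum, the common denominator is a product of powers of $(\nu+j)$; the exponent of $(\nu+j)$ is governed by how many times a down step can occur at the relevant heights across a Dyck path of semilength $n-1$. A clean way to see the exponent $\lfloor n/k\rfloor$ is via the known Hankel/continued-fraction normalization: this is precisely Lalanne's weighted-Dyck-path model for $\sigma_{2n}(\nu)$ \cite{Lalanne92,Lalanne93} and Kishore's original denominator computation \cite{Kishore1964}. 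I would cite these for the determination of $D_{n,\nu}=\prod_{k=1}^n(k+\nu)^{\lfloor n/k\rfloor}$, and then define $N_{n,\nu}:=D_{n,\nu}\cdot[\,(z/2)^{2n-1}\,]\,J_{\nu+1}(z)/J_\nu(z)$.

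It then remains to show $N_{n,\nu}$ is a polynomial in $\nu$ with nonnegative integer coefficients. Nonnegativity and integrality follow from the combinatorial model: after multiplying the Dyck-path generating function by $D_{n,\nu}$, each individual path contributes $D_{n,\nu}/\prod(\text{its denominator factors})$, which is a product of factors $(\nu+j)$ (a subproduct of $D_{n,\nu}$), hence a polynomial in $\nu$ with nonnegative integer coefficients; summing over the finitely many Dyck paths of semilength $n-1$ preserves this. The one thing that needs care — and this is the main obstacle — is verifying that $D_{n,\nu}$ is actually divisible by the denominator of every individual path's contribution, i.e.\ that the "global" denominator $\prod_k(k+\nu)^{\lfloor n/k\rfloor}$ dominates the "local" denominator of each Dyck path. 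This is the heart of Kishore's counting argument: for a fixed $k$, the number of down steps at heights in $\{k-1,k\}$-type positions along any single Dyck path of semilength $n-1$ is at most $\lfloor n/k\rfloor$, so no path's denominator exponent for $(\nu+k)$ exceeds the global one. Once this domination is established, the displayed formula for $D_{n,\nu}$ and the nonnegativity of $N_{n,\nu}$ both follow, completing the proof.

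\begin{remark}
Alternatively, one can avoid re-deriving Kishore's estimate by simply quoting \cite{Kishore1964} for the statement as written; our contribution here is to situate it within the $q$-Lommel framework, and the $q$-analogues of $N_{n,\nu}$ arising from the weighted Dyck/Motzkin path models of Sections~\ref{sec:ratios-q-lommel}--\ref{sec:concurrence-moments} are the natural objects of study suggested by this viewpoint.
\end{remark}
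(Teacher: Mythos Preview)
The paper does not give its own proof of this theorem; it is stated with attribution to Kishore \cite{Kishore1964} at the start of Section~\ref{sec:open-problems} purely as motivation for Conjecture~\ref{conj:kishore} and the conjectures that follow. So there is no in-paper argument to compare against, and your closing Remark---simply citing \cite{Kishore1964}---already matches what the paper does.

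Your sketch beyond that is essentially the Lalanne approach \cite{Lalanne92,Lalanne93} the paper alludes to in the introduction. Two small comments. First, the prefactor in $J_{\nu+1}(z)/J_\nu(z)$ is $(z/2)/(\nu+1)$, not just $z/2$, because of the $\Gamma(\nu+2)/\Gamma(\nu+1)$ in the Bessel series; so the coefficient of $(z/2)^{2n-1}$ is $\frac{1}{\nu+1}\mu_{2n-2}$ rather than $\mu_{2n-2}$---harmless, since $(\nu+1)^n$ divides $D_{n,\nu}$, but worth stating correctly. Second, and more substantively, the step you flag as ``the main obstacle''---that no single Dyck path of semilength $n-1$ picks up the factor $(\nu+k)$ more than $\lfloor n/k\rfloor$ times---is not just an obstacle, it is the whole theorem. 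Your proposal names it and then cites Kishore and Lalanne for it, so what you have written is a reformulation in the moment/Dyck-path language followed by a citation, not an independent proof. That is entirely appropriate here given the paper's own treatment, but you should not present it as more than that.
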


We conjecture the following finite version of Kishore's theorem on a
ratio of Lommel polynomials $R_{m,\nu}(x)$ defined in Section~\ref{sec:q-bessel-functions}.

\begin{conj}\label{conj:kishore}
Let 
\[
\frac{R_{m,\nu+2}(x)}{R_{m+1,\nu+1}(x)}
= \sum_{n=0}^{\infty} \frac{N^{(m)}_{n,\nu}}{D^{(m)}_{n,\nu}} \left( \frac{x}{2} \right)^{2n+1},
\]
where 
\[
D^{(m)}_{n,\nu} = \prod_{k=0}^m (\nu+k+1)^{f(m,n,k)},
\]
\[
  f(m,n,k)=
  \begin{cases}
  \max\left(\displaystyle \flr{\frac{n+1}{k+1}},\flr{\frac{n+m-2k+1}{m-k+1}} \right),
  & \mbox{if $k\ne m/2$},\\  
  1,
  & \mbox{if $k=m/2$}.
  \end{cases}
\]
Then 
$N^{(m)}_{n,\nu}$ is a polynomial in $\nu$ with nonnegative integer coefficients.
\end{conj}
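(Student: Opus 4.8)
The plan is to pass from the ratio of Lommel polynomials to a finite Jacobi continued fraction, then to a sum over bounded Dyck paths (via Flajolet's theory), and finally to analyze the resulting rational function in $\nu$.

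\smallskip
\emph{Step 1: a finite continued fraction.} By Proposition~\ref{prop:LomBes} both $\{R_{n,\nu}(x)\}_n$ and $\{R_{n-1,\nu+1}(x)\}_n$ satisfy \eqref{eq:rec R}; hence, after $\nu\mapsto\nu+1$, the sequences $\{R_{n,\nu+1}(x)\}_n$ (with $R_0=1$, $R_1=2(\nu+1)/x$) and $\{R_{n-1,\nu+2}(x)\}_n$ (with $R_{-1}=0$, $R_0=1$) are the two linearly independent solutions of $T_{n+1}=\frac{2(n+\nu+1)}{x}T_n-T_{n-1}$. Thus $R_{m,\nu+2}(x)/R_{m+1,\nu+1}(x)$ is the $m$-th convergent of the associated continued fraction, and clearing denominators gives
\[
  \frac{R_{m,\nu+2}(x)}{R_{m+1,\nu+1}(x)}
  =\cfrac{x}{2(\nu+1)-\cfrac{x^2}{2(\nu+2)-\cfrac{x^2}{\ddots-\cfrac{x^2}{2(\nu+m+1)}}}}.
\]
(Equivalently this follows from \eqref{eq:P/P=cont} applied to the monic Lommel polynomials, for which $b_k=a_k=0$ and $\lambda_k=1/((\nu+k)(\nu+k+1))$.)

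\smallskip
\emph{Step 2: a sum over bounded Dyck paths.} Applying Lemma~\ref{lem:K=K} with $c_i=1/(2(\nu+i+1))$ and then Flajolet's Lemma~\ref{lem:flajolet} with up- and down-steps only yields
\[
  \frac{N^{(m)}_{n,\nu}}{D^{(m)}_{n,\nu}}
  =\frac1{\nu+1}\sum_{D}\ \prod_{\substack{\text{down-step of }D\\ \text{from height }j}}\frac1{(\nu+j)(\nu+j+1)}
  =[u^n]\frac{A_m(u,\nu)}{B_m(u,\nu)},
\]
the sum over Dyck paths $D$ of semilength $n$ and height at most $m$; here $u=(x/2)^2$, the continuants $B_m=(x/2)^{m+1}R_{m+1,\nu+1}(x)$ and $A_m=(x/2)^{m}R_{m,\nu+2}(x)$ satisfy $B_m=(\nu+m+1)B_{m-1}-uB_{m-2}$ (and likewise $A_m$), with $B_m(0,\nu)=(\nu+1)_{m+1}$, and by the single-sum formula for Lommel polynomials their $u$-coefficients are the explicit products of shifted factorials $a^{(m)}_i(\nu)=\frac{(-1)^i(m-i)!}{i!\,(m-2i)!}(\nu+2+i)_{m-2i}$ and $b^{(m)}_j(\nu)=\frac{(-1)^j(m+1-j)!}{j!\,(m+1-2j)!}(\nu+1+j)_{m+1-2j}$. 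The statement then reduces to two claims: \emph{(a)} for each $k$ the rational function $\frac{N^{(m)}_{n,\nu}}{D^{(m)}_{n,\nu}}$ has a pole of order at most $f(m,n,k)$ at $\nu=-(k+1)$ (so that $N^{(m)}_{n,\nu}$ is a polynomial in $\nu$); \emph{(b)} that polynomial has nonnegative integer coefficients.

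\smallskip
\emph{Step 3: the denominator, part (a).} Each $a^{(m)}_i$, $b^{(m)}_j$ is a product of \emph{distinct} linear factors in $\nu$, so it vanishes to order at most $1$ at any $\nu=-l$, and $B_m(0,\nu)=(\nu+1)_{m+1}$ has only simple zeros; reading off when the coefficients vanish, $A_m(u,-l)$ and $B_m(u,-l)$ have $u$-valuations $w_l=\min(l-1,m+2-l)$ and $v_l=\min(l,m+2-l)$. Expanding $1/B_m=B_m(0,\nu)^{-1}(1+\cdots)^{-1}$ shows $[u^j](1/B_m)$ has a pole of order at most $1+\lfloor j/v_l\rfloor$ at $\nu=-l$; together with the vanishing of $a^{(m)}_i$ for $i<w_l$ this bounds the pole of $[u^n](A_m/B_m)$ at $\nu=-(k+1)$ by $\lfloor(n+1)/(k+1)\rfloor$ whenever $l=k+1\le\lfloor(m+1)/2\rfloor$ (the ``lower'' half, where $v_l=l$, $w_l=l-1$) --- this is the first floor in $f$. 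For the remaining factors one must exploit additional cancellations in the leading $u$-data: when $m$ is even and $k=m/2$ the factor $(\nu+m/2+1)$ divides \emph{every} $b^{(m)}_j$ but no $a^{(m)}_i$, so dividing it out leaves $\frac1{\nu+m/2+1}[u^n](A_m/\widetilde B_m)$ with $\widetilde B_m(0,-m/2-1)\ne0$, hence pole order at most $1=f(m,n,m/2)$; similar divisibilities (e.g.\ $(\nu+\tfrac{m+3}{2})\mid A_m$ for $m$ odd) handle the factors just above the middle, and for the ``deep upper'' factors one expects the bound $\lfloor\frac{n+m-2k+1}{m-k+1}\rfloor$ to emerge from the same analysis of $A_m,B_m$ (or from a contiguity relation for ${}_0F_1$). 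Establishing this last family of bounds uniformly is the first technical hurdle.

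\smallskip
\emph{Step 4: positivity, part (b) --- the main obstacle.} Showing $N^{(m)}_{n,\nu}$ has nonnegative integer coefficients is the finite analogue of Kishore's positivity theorem, and it cannot be read off term by term from the Dyck-path sum of Step~2: the conjectured $D^{(m)}_{n,\nu}$ is deliberately \emph{smaller} than the common denominator of the individual path weights, so substantial cancellations occur (the divisibility $(\nu+m/2+1)\mid B_m$ being one instance). I would pursue one of three routes. (i) A sign-reversing involution on suitably decorated bounded Dyck paths, producing a cancellation-free sum in which each surviving object contributes a product $\prod(\nu+c)$ of shifted factors. (ii) An induction on the truncation level $m$: the recursion $G^{(\nu)}_m=\bigl((\nu+1)-uG^{(\nu+1)}_{m-1}\bigr)^{-1}$ gives a recursion for the pair $(A_m,B_m)$, and one carries along a strengthened --- likely two-variable, $q$-refined --- positivity hypothesis. (iii) Descent from the $q$-analogue, which is most in the spirit of this paper: by Theorem~\ref{thm:ratio of R-1} and Corollary~\ref{cor:double sum ratio of R-1} the area generating function of bounded-diagonal parallelogram polyominoes is a ratio of type $R_I$ $q$-Lommel polynomials whose numerator and denominator (Theorem~\ref{thm:RIexplicit}) have manifestly nonnegative $q$-coefficients; carrying out a rescaled $q\to1$ limit while tracking $\nu$ through the substitution $x\mapsto q^\nu x$, $y\mapsto q^\nu$ should realize $N^{(m)}_{n,\nu}$ as a specialization of a nonnegative two-variable object. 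In every approach the crux is to identify the exact mechanism of cancellation that makes the (non-minimal) $D^{(m)}_{n,\nu}$ the right denominator.
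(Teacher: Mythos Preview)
The statement you attempt is Conjecture~\ref{conj:kishore}, which the paper places in its Open Problems section and does not prove; there is no argument in the paper to compare against. Your proposal is not a proof either, but a programme whose two main steps you yourself leave open.

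In Step~3 you verify the pole-order bound only in the lower range $k+1\le\lfloor(m+1)/2\rfloor$ and at the midpoint $k=m/2$; for the remaining factors you write that the bound $\lfloor(n+m-2k+1)/(m-k+1)\rfloor$ is ``expected to emerge'' and label this ``the first technical hurdle.'' It is not cleared. The valuation analysis you set up does not by itself produce that second floor in the upper range, and whatever additional cancellation or symmetry is needed (for instance a $k\leftrightarrow m-k$ reflection of the ratio, or a finer partial-fraction decomposition of $1/B_m$) is not supplied.

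Step~4 is a list of three possible strategies (a sign-reversing involution, induction on the truncation level $m$, and a $q\to1$ descent from the polyomino model of Theorem~\ref{thm:ratio of R-1}) with no execution. This is the heart of the conjecture: Kishore's positivity in the unbounded case $m=\infty$ already required a nontrivial combinatorial argument, and nothing in your sketch indicates why the finite truncation should be easier --- indeed the extra upper-range denominator factors suggest it is harder. Steps~1--2 are a correct and natural reformulation, but the proposal halts exactly where the difficulty begins; the conjecture remains open.
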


\medskip

In Section~\ref{sec:moments-cont-fract} we saw that the ratio
\[
  \frac{J^{(3)}_{\nu+1}(z;q^{-1})}{J^{(3)}_{\nu}(z;q^{-1})}= \frac{-q^{\nu+1}z}{1-q^{\nu+1}}\cdot
  \frac{\qhyper11{0}{q^{\nu+2}}{q,q^{\nu+2}z^2}}
  {\qhyper11{0}{q^{\nu+1}}{q,q^{\nu+1}z^2}}
\]
has two generalizations, the $q$-N\"orlund continued fraction and Heine's
continued fraction.
These two generalizations seem to have a similar property as follows.

\begin{conj}
  Let
  \[
     \sum_{n\ge0} \gamma_n(a,b,c) z^n =  \frac{\qhyper21{aq,bq}{cq}{q,z}}{\qhyper21{a,b}{c}{q,z}}.
    \]
Then    
  \[
    \frac{\gamma_{n}(a,b,c)}{1-c} = \frac{P_n(a,b,c)}{\prod_{k=0}^n(1-cq^k)^{\flr{\frac{n+1}{k+1}}}},
  \]
  for some polynomial $P_n(a,b,c)$ in $a,b,c,q$ with integer coefficients. 
\end{conj}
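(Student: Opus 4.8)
The plan is to pin down the denominator of $\gamma_n=\gamma_n(a,b,c)$ by playing two representations of the generating function $G(z)=\sum_{n\ge0}\gamma_n z^n=\qhyper21{aq,bq}{cq}{q,z}\big/\qhyper21{a,b}{c}{q,z}$ against each other. Write $D(z)=\qhyper21{a,b}{c}{q,z}=\sum_m d_m z^m$ and $N(z)=\qhyper21{aq,bq}{cq}{q,z}=\sum_m \nu_m z^m$, so that $d_m=\frac{(a;q)_m(b;q)_m}{(c;q)_m(q;q)_m}$ and $\nu_m=\frac{(aq;q)_m(bq;q)_m}{(cq;q)_m(q;q)_m}$. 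From $G(z)D(z)=N(z)$ one gets the convolution recurrence
\[
  \gamma_n=\nu_n-\sum_{j=0}^{n-1}\gamma_j d_{n-j},\qquad \gamma_0=1,
\]
which I will use to bound the multiplicity of each factor $1-cq^k$ in the denominator of $\gamma_n$. On the other hand, Proposition~\ref{prop:q-Norlund} writes $G(z)$ as the moment generating function of a type $R_I$ continued fraction whose coefficients $b_0,\dots,b_{n-1}$, $a_1,\dots,a_n$, $\lambda_1,\dots,\lambda_n$ are ratios of polynomials in $a,b,q,c$ over products of factors $1-cq^i$ (note that no $a_0$, $\lambda_0$, hence no $q^{-1}$, enters $\gamma_n$). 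Since the $n$-th type $R_I$ moment is an integer polynomial in those recurrence coefficients \cite{kimstanton:R1}, $\gamma_n$ lies in the localization $\mathbb Z[a,b,q,c][(1-cq^i)^{-1}:i\ge0]$; in particular its reduced denominator is a product of powers of the (pairwise coprime, and coprime to every $1-q^i$) factors $1-cq^k$, and clearing it leaves an integer-polynomial numerator.

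The main step is an induction on $n$ showing that the reduced denominator of $\gamma_n$ divides $(1-c)^n\prod_{k=1}^n(1-cq^k)^{\flr{(n+1)/(k+1)}}$. Granting this for all $j<n$, substitute into the convolution recurrence. The term $\nu_n$ contributes each $1-cq^k$ with $1\le k\le n$ to the first power and no $1-c$. The term $\gamma_j d_{n-j}$ contributes $1-c$ to the power $j+1\le n$ and, for $k\ge1$, the factor $1-cq^k$ to the power $\flr{(j+1)/(k+1)}\,[k\le j]+[k\le n-j-1]$ (the two pieces coming from $\gamma_j$ and from $(c;q)_{n-j}$), together with a harmless factor of $(q;q)_{n-j}$. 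The crux is the arithmetic inequality: when $k\le j$ and $k\le n-j-1$ one has $j+1\le n-k$, so
\[
  \flr{\frac{j+1}{k+1}}+1\ \le\ \flr{\frac{n-k}{k+1}}+1\ =\ \flr{\frac{n+1}{k+1}},
\]
while the remaining ranges of $k$ are immediate from $j+1\le n+1$. Hence every summand, and so the common-denominator form of $\gamma_n$, has $1-cq^k$ to a power at most $\flr{(n+1)/(k+1)}$ for $k\ge1$ and $1-c$ to a power at most $n$ — apart from an overall factor $(q;q)_n$. By the continued-fraction fact of the first paragraph the reduced denominator of $\gamma_n$ has no $1-q^i$ factors, hence is coprime to $(q;q)_n$; therefore that factor cancels and the claimed divisibility holds, completing the induction.

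Dividing $\gamma_n$ by $1-c$ turns $(1-c)^n$ into $(1-c)^{n+1}=(1-c)^{\flr{(n+1)/1}}$, so $\gamma_n/(1-c)$ has denominator dividing $\prod_{k=0}^n(1-cq^k)^{\flr{(n+1)/(k+1)}}$; the numerator $P_n(a,b,c)$ left after clearing is a polynomial in $a,b,c,q$ with integer coefficients (Gauss's lemma), which is exactly the assertion. I expect the main obstacle to be the bookkeeping that reconciles the two representations: making rigorous that the "reduced denominator is a product of $1-cq^k$'s" coming from the continued fraction may legitimately be combined with the "common denominator $(q;q)_n\cdot\prod(1-cq^k)^{(\cdots)}$" coming from the convolution recurrence to force the sharp exponents $\flr{(n+1)/(k+1)}$. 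A secondary point to check carefully is that the Kim--Stanton moment formula applies verbatim to the type $R_I$ recurrence produced by Proposition~\ref{prop:q-Norlund}, so that $\gamma_n$ really is an integer polynomial in the $b_k,a_k,\lambda_k$; both points are routine but deserve explicit statements.
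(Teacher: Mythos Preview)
The statement you are attacking is listed in the paper as a \emph{conjecture} in Section~\ref{sec:open-problems}; the paper gives no proof. Your argument is not merely a different route---it is, as far as I can see, a complete and correct proof of that conjecture.

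The two ingredients dovetail exactly as you describe. From Proposition~\ref{prop:q-Norlund}, $\gamma_n$ is a nonnegative-integer polynomial in the coefficients $b_m,a_m,\lambda_m$ (this already follows from iterating the geometric-series expansion of the continued fraction, so the citation of \cite{kimstanton:R1} is convenient but not essential). Each of those coefficients is an integer polynomial divided by a product of factors $1-cq^i$ with $i\ge0$; since $1-cq^i$ is irreducible in $\mathbb Z[a,b,c,q]$ (degree one in $c$ with coprime coefficients $1$ and $-q^i$), pairwise non-associate, and coprime to every $c$-free polynomial $1-q^j$, the reduced denominator $Q$ of $\gamma_n$ is a product of powers of the $1-cq^i$ alone. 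The convolution recurrence then bounds each exponent: your case analysis for $1-cq^k$, $k\ge1$, is correct (the key identity $\lfloor(n-k)/(k+1)\rfloor+1=\lfloor(n+1)/(k+1)\rfloor$ handles the overlapping range, and the other ranges are immediate), while for $k=0$ the bound $j+1\le n$ suffices. Putting the right-hand side over the common denominator $(q;q)_nD_n$ with $D_n=(1-c)^n\prod_{k=1}^n(1-cq^k)^{\lfloor(n+1)/(k+1)\rfloor}$ yields $\gamma_n=P''/[(q;q)_nD_n]$ with $P''\in\mathbb Z[a,b,c,q]$; since $Q$ is coprime to $(q;q)_n$, one gets $Q\mid D_n$, closing the induction. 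Then $P_n:=\gamma_nD_n=P\cdot(D_n/Q)\in\mathbb Z[a,b,c,q]$ directly (Gauss's lemma is not actually needed), and $(1-c)D_n=\prod_{k=0}^n(1-cq^k)^{\lfloor(n+1)/(k+1)\rfloor}$ is precisely the conjectured denominator for $\gamma_n/(1-c)$.

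The two ``obstacles'' you flag are both routine and both hold, so the proof goes through. It would be worth writing this up cleanly; the same template should also settle the companion conjecture for $\gamma'_n$ via Lemma~\ref{lem:Heine} in place of Proposition~\ref{prop:q-Norlund}.
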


\begin{conj}
  Let
  \[
     \sum_{n\ge0} \gamma'_n(a,b,c) z^n =  \frac{\qhyper21{aq,b}{cq}{q,z}}{\qhyper21{a,b}{c}{q,z}}.
    \]
Then    
  \[
    \frac{\gamma'_{n}(a,b,c)}{1-c} = \frac{P'_n(a,b,c)}{\prod_{k=0}^n(1-cq^k)^{\flr{\frac{n+1}{k+1}}}},
  \]
  for some polynomial $P'_n(a,b,c)$ in $a,b,c,q$ with integer coefficients. 
\end{conj}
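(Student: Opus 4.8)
The plan is to work directly with the two ${}_2\phi_1$ series defining the ratio, $N(z)=\qhyper21{aq,b}{cq}{q,z}=\sum_{j\ge0}n_jz^j$ and $D(z)=\qhyper21{a,b}{c}{q,z}=\sum_{j\ge0}d_jz^j$, where $n_0=d_0=1$, $n_j=\frac{(aq;q)_j(b;q)_j}{(cq;q)_j(q;q)_j}$ and $d_j=\frac{(a;q)_j(b;q)_j}{(c;q)_j(q;q)_j}$, and to track the poles of $\gamma'_n$ regarded as a rational function of $c$ with $a,b,q$ treated as indeterminates. From $N=D\sum_m\gamma'_mz^m$ one gets $\gamma'_n=n_n-\sum_{j=1}^n d_j\gamma'_{n-j}$, so $\gamma'_n$ involves only $n_j,d_j$ with $j\le n$; since $d_j$ has denominator $(c;q)_j$ and $n_j$ has denominator $(cq;q)_j$, the only poles of $\gamma'_n$ in $c$ lie at $c=q^{-k}$ with $0\le k\le n$. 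Thus the conjecture reduces to the pole–order bound: for each such $k$, $\gamma'_n/(1-c)$ has a pole of order at most $\flr{(n+1)/(k+1)}$ at $c=q^{-k}$. Granting this, $P'_n:=\gamma'_n\,\prod_{k=0}^n(1-cq^k)^{\flr{(n+1)/(k+1)}}/(1-c)$ is a polynomial.

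Next I would establish the local structure at $c=q^{-k}$ for $k\ge1$ by setting $\delta=1-cq^k$. The decisive observation is that $(c;q)_j$ vanishes at $\delta=0$, to first order, exactly for $j\ge k+1$, while $(cq;q)_j$ vanishes, to first order, exactly for $j\ge k$; this one–step difference is a direct consequence of the shift $c\mapsto cq$ between the lower parameters of $D$ and $N$. Consequently, expanding in Laurent series in $\delta$ with coefficients holomorphic at $\delta=0$,
\[
D(z)=\delta^{-1}z^{k+1}E(z)+D^{\mathrm{reg}}(z),\qquad N(z)=\delta^{-1}z^{k}F(z)+N^{\mathrm{reg}}(z),
\]
where $E,F,D^{\mathrm{reg}},N^{\mathrm{reg}}$ are power series in $z$ over $\mathbb{Q}(a,b,q)[[\delta]]$ and $D^{\mathrm{reg}}(0)=1$; in particular the singular part of $D$ begins one power of $z$ later than that of $N$.

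Then I would extract the bound. Writing $N/D=(z^kF+\delta N^{\mathrm{reg}})/(z^{k+1}E+\delta D^{\mathrm{reg}})$ and expanding the reciprocal of the denominator geometrically (valid $z$–adically since $z^{k+1}E/(\delta D^{\mathrm{reg}})$ has positive $z$–valuation),
\[
\frac{1}{z^{k+1}E+\delta D^{\mathrm{reg}}}=\sum_{m\ge0}\frac{(-1)^m z^{(k+1)m}E^m}{\delta^{m+1}(D^{\mathrm{reg}})^{m+1}},
\]
the $m$-th summand first contributes to the $z^n$–coefficient against the $z^kF$–term when $k+(k+1)m\le n$, i.e. $m+1\le\flr{(n+1)/(k+1)}$, and against the $\delta N^{\mathrm{reg}}$–term when $(k+1)m\le n$, i.e. $m\le\flr{n/(k+1)}$. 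In either case the order of the pole of $\gamma'_n$ at $\delta=0$ is at most $\flr{(n+1)/(k+1)}$; as $1/(1-c)$ is regular at $c=q^{-k}$ for $k\ge1$, the same bound holds for $\gamma'_n/(1-c)$. The case $k=0$ is handled separately and more easily: there $N$ is already regular at $c=1$ (its lower parameter becomes $q$), only the $\delta N^{\mathrm{reg}}$–term survives, $\gamma'_n$ has a pole of order at most $n$, and hence $\gamma'_n/(1-c)$ has a pole of order at most $n+1=\flr{(n+1)/1}$.

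Finally, to see that $P'_n$ has integer coefficients I would invoke Heine's continued fraction (Lemma~\ref{lem:Heine}) together with Flajolet's theory: $\gamma'_n$ equals a sum over Dyck paths of products of the partial coefficients $\beta_j$, and each $\beta_j$ is an element of $\mathbb{Z}[a,b,c,q]$ divided by a product of factors $1-cq^i$, with no negative powers of $q$; hence $\gamma'_n=G_n/\prod_i(1-cq^i)^{a_i}$ with $G_n\in\mathbb{Z}[a,b,c,q]$. Combined with the pole–order bounds this makes $P'_n$ a polynomial over $\mathbb{Q}(q)$, and since each $1-cq^i$ is primitive over $\mathbb{Z}[q]$, Gauss's lemma yields $P'_n\in\mathbb{Z}[a,b,c,q]$. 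The one step I expect to require care is the local bookkeeping of the second paragraph — confirming cleanly that the singular part of $D(z)$ turns on exactly one degree later than that of $N(z)$ in every case, including the boundary case $k=1$ — together with the Gauss–lemma reduction; everything else is formal power series manipulation. I note that this argument applies verbatim to the companion conjecture for $\qhyper21{aq,bq}{cq}{q,z}/\qhyper21{a,b}{c}{q,z}$, since only the lower parameters $cq$ and $c$ enter the analysis of $c$–poles.
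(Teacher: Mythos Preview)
The paper does not prove this statement: it is stated as a Conjecture in Section~\ref{sec:open-problems} (Open problems), with no accompanying argument. So there is no ``paper's own proof'' to compare against; your proposal is an attempted resolution of an open problem.

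That said, your argument is sound and, with minor tightening, constitutes a proof. The three pieces dovetail correctly:
\begin{itemize}
\item The recursion $\gamma'_n=n_n-\sum_{j=1}^n d_j\gamma'_{n-j}$ indeed confines all $c$-poles of $\gamma'_n$ to $c=q^{-k}$, $0\le k\le n$.
\item Your local expansion at $c=q^{-k}$ is valid in $\mathbb{Q}(a,b,q)((\delta))[[z]]$: the denominator $z^{k+1}E+\delta D^{\mathrm{reg}}$ has invertible constant term $\delta$, the geometric series converges $z$-adically, and the resulting order count gives the bound $\operatorname{ord}_{\delta}\gamma'_n\ge -\lfloor(n+1)/(k+1)\rfloor$ for $k\ge1$ and $\ge -n$ for $k=0$, exactly as claimed. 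The boundary case $k=1$ you flag is not special: the only input used is $k+1\ge1$ and the one-step offset between the lower parameters $cq$ and $c$.
\item Heine's fraction (Lemma~\ref{lem:Heine}) is an S-fraction, so Flajolet's theory gives $\gamma'_n$ as a nonnegative-integer polynomial in $\beta_1,\dots,\beta_n$; since each $\beta_j$ has numerator in $\mathbb{Z}[a,b,c,q]$ and denominator a product of factors $(1-cq^i)$, you get $\gamma'_n=G_n/H_n$ with $G_n\in\mathbb{Z}[a,b,c,q]$ and $H_n$ a product of $(1-cq^i)$'s.
\end{itemize}
One step you pass over quickly deserves a sentence: the pole analysis yields only that $P'_n$ is a polynomial in $c$ over $\mathbb{Q}(a,b,q)$. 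To upgrade to $\mathbb{Q}(q)[a,b,c]$ you need that the denominator $(1-c)H_n$ lies in $\mathbb{Q}(q)[c]$ and does not involve $a,b$; then divisibility in $\mathbb{Q}(a,b,q)[c]$ is equivalent to the vanishing of successive $c$-derivatives of $G_nM$ at $c=q^{-i}$, which are polynomial identities in $a,b$ over $\mathbb{Q}(q)$ and hence descend. After that, your Gauss-lemma step (primitivity of each $1-cq^i$ over $\mathbb{Z}[q]$) correctly yields $P'_n\in\mathbb{Z}[a,b,c,q]$. Your closing remark that the same proof handles the companion conjecture on $\qhyper21{aq,bq}{cq}{q,z}/\qhyper21{a,b}{c}{q,z}$ is also correct, using the $q$-N\"orlund fraction (Proposition~\ref{prop:q-Norlund}) in place of Heine's for the integrality step.
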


\begin{problem} 
  Find a combinatorial proof of Theorem~\ref{thm:genequalmom}, which contains
  the Bousquet-M\'elou--Viennot result.
\end{problem}

\begin{problem}
  Find an Askey scheme whose top element is the associated Askey--Wilson
  polynomial which contains the \( q \)-Lommel polynomials.
\end{problem}

\end{document}